\newtheorem{thm}{Theorem}[section]
\newtheorem{lem}[thm]{Lemma}
\newtheorem{cor}[thm]{Corollary}
\newtheorem{ex}[thm]{Example}
\newtheorem{DEF}[thm]{Definition}
\theoremstyle{remark}                  
\newtheorem{rem}[thm]{Remark}
\def\ep{\varepsilon}
\def\al{\alpha}
\newcommand{\ds}{\displaystyle}
\begin{document}
\title{ABOUT THE GEOMETRY and REGULARITY of LARGEST SUBSOLUTIONS For a FREE BOUNDARY problem in $\mathbf{R}^2$: Elliptic case}
\author{Betul Orcan}
\begin{abstract}
We study geometric and regularity properties of the largest subsolution of a one-phase free
boundary problem under a very general free boundary condition in $\mathbb{R}^2$. Moreover, we provide density bounds for the positivity set and its complement near the free boundary.
\end{abstract}
\address{Department of Mathematics, University of Texas at Austin, Austin, TX, USA}
\email{borcan@math.utexas.edu}

\maketitle
\baselineskip=16pt
\pagestyle{plain}              


\section{Introduction}\label{sec:intro}

In this paper, we study the geometry and regularity of the largest subsolution
of the following Free Boundary Problem (FBP) for a given bounded open domain $D \subseteq \mathbb{R}^2$ and $u: \mathbb{R}^2\setminus D \mapsto [0,+\infty)$ is a continuous function which satisfies:
\begin{equation}\label{eq:FB}
\begin{cases}

\ds\triangle u =0 &\text{ in } \Omega(u) \backslash D,\\
u=g(x),\;  &\text{ on } \partial D, \\
u=0, |\nabla u|^2= f(x), &\text{ on } \partial \Omega(u),
\end{cases}
\end{equation}
where $\Omega(u)=\{x\in \mathbb{R}^2| u(x)>0\}$; $g(x)$ and $f(x)$ are positive continuous
functions. For $f(x)$, there exist $\Lambda, \lambda >0$ such that $0<\lambda <
f(x)<\Lambda,\; \text{for\; all}\; x \in \mathbb{R}^2.$

There is a wide range of physical models related to the above FBP, encompassing problems such as flame propagation and G-equations, capillary drops on a flat or inclined surface, phase transitions, and obstacle problems. There are previous results about the regularity of variational and weak solutions to these example FBPs; for the variational solutions of problem \eqref{eq:FB}, Alt and Caffarelli have results in \cite{AltCaff81}; for the two-phase problem, Alt et al. in \cite{ACF84}; for the three-dimensional case, Caffarelli et al. in \cite{CaJerKenGlbl}; Caffarelli and Shahgholian in \cite{CaffShah} when $f(x)$ is Lipschitz; viscosity solutions in the two-phase problem were studied by
Lederman and Wolanski in \cite{LederWolanski}; the geometry of the free boundary in terms of the weak solution  Kenig and Toro in \cite{KenigToro}. One can consider the problem \eqref{eq:FB} as a linearized version of the capillary drop problem; for the capillary drop problem in variational case, Caffarelli and Friedman have geometric and regularity results in \cite{CaffFriedman}; the inhomogeneous surface and inclined surface cases were considered by Caffarelli and Mellet, \cite{CaffMellIncldSurf,CaffMellCapllaDrops} . If we consider the evolution problem corresponding to \eqref{eq:FB}, then some examples from the literature are: for the  the heat equation, Caffarelli and V\'{a}zquez in \cite{CaffVaz95}; for the front propagation problem in terms of pulsating wave solutions Berestycki and Hamel, \cite{BerstkiHamel02}. For more references, see the book of Caffarelli and Salsa, \cite{CaffSalsa}. Most of these results require that the Free Boundary Condition (FBC) is at least Lipschitz and the media is periodic, We would like to extend these results both to viscosity solutions and to the random case since real life systems also require to work with these cases.  In this context, an example would be a
linearized version of a drop sliding through an inclined plane with random parallel grooves.
In that case, we expect the leading edge of the drop to be steeper, the
drop getting stuck on the grooves or ``the least supersolution of the free
boundary problem", while the back edge getting hang to the grooves and is
flatter, ``the largest subsolution". The problem for the least supersolution of \eqref{eq:FB} has been studied
extensively under smooth and periodic data, \cite{CaLee07, CaLee05}. Among the reasons for its popularity, one is that because it has much better non
degeneracy properties and is much simpler.
\par For some models, the largest subsolution is the proper object of study. One of our ongoing research is about the homogenization problems of FBPs in stationary ergodic case, \cite{BetOr}. In this problem we use a method for viscosity solutions, at first we tried to work on the least supersolution but it did not work, that is why, we started to consider the largest subsolution. On the other hand, there were no prior regularity results for the largest subsolution and this was the our starting point.
\par In addition to this,
for problems in random media, expecting to have Lipschitz regularity for the given data is not reasonable; even continuity of it may not hold in this case. In the random case, media can be heterogenous without any periodic setting, i.e. the FBC can be at most positive, bounded, and continuous in the space variable. In this paper, we focus on
regularity issues for a FBP related to these phenomena  and we concentrate on the geometric description of the
largest viscosity subsolution in two dimensions (the case of a back edge of a
capillary drop or a flame propagating on a planar region) with weaker requirements on the data. We develop a regularity and non
degeneracy theory for it's largest subsolution and give a geometric characterization of the free boundary. Motivated by the study of random media, we allow for the data to be highly oscillatory. Thus, we only require $f(x)$ to be positive, bounded, and measurable function. We used the continuity of $f(x)$ only to be able to obtain the continuous viscosity solutions. One can weaken the continuity assumption on $f(x)$ by taking into account suitable viscosity solution definitions such as in terms of upper-lower semi-continuous functions, as done in \cite{CrIshiiLions}. Moreover, we cannot generalize these results to $\mathbb{R}^n$ because of the lost of Non-Degeneracy, instead, the order of the lower bound of the growth rate near free boundary point $x_0$ becomes $r^{n-1}$ in $B_r(x_0)$.
 \par The paper is organized as follows: Chapter \ref{Definitions} presents the construction of the largest subsolution of \eqref{eq:FB}. Chapter \ref{ChapLipNonDEg} presents the results of Lipschitz and Non-Degeneracy properties. In Chapter \ref{ChapLocalResults}, we will show the geometric properties of the free boundary. Because of weak assumptions on the FBC, one can expect to have a very unstable free boundary (highly oscillatory) on the contrary Chapter \ref{ChapLocalResults} guarantees us that, locally, the normalized neighborhood of the free boundary has two components with positive densities with one of them is the positivity set and the other one is the zero level set, i.e. locally, free boundary does not have high and irregular oscillation.

\par Our results in this paper are the following:
\begin{thm}
  Let $u$ be the largest subsolution of \eqref{eq:FB}, then $u$ has the following properties:
  \begin{itemize}
    \item[(i)] $u$ is a viscosity solution of \eqref{eq:FB},
    \item[(ii)] $u$ is Lipschitz,
    \item[(iii)] $u$ is Non-Degenerate,
    \item[(iv)] Locally, $\Omega(u)$
has a single component $\Gamma$ with
a positively dense complement.
  \end{itemize}
\end{thm}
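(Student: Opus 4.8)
The plan is to prove (i)--(iv) in the order announced above, following the structure of Sections~\ref{Definitions}--\ref{ChapLocalResults}.

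\emph{Construction and (i).} First I would fix a supersolution $\bar w$ of \eqref{eq:FB} with $\bar w=g$ on $\partial D$ (a harmonic function decaying at infinity), and set $u=\sup\{v:\ v \text{ a continuous subsolution of \eqref{eq:FB}},\ v\le\bar w,\ v|_{\partial D}=g\}$. Using the comparison principle for harmonic functions together with stability of the subsolution property under suprema (harmonic replacement inside the positivity set), $u$ is again a subsolution: harmonic in $\Omega(u)\setminus D$, equal to $g$ on $\partial D$, and $|\nabla u|^2\le f$ on $\partial\Omega(u)$ in the viscosity sense; continuity of $f$ and $g$ is what makes the $u$ so produced continuous. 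The decisive point for (i) is that maximality forces the reverse free boundary inequality: if some admissible $C^1$ test function touched $u$ from below at $x_0\in\partial\Omega(u)$ with asymptotic slope strictly below $\sqrt{f(x_0)}$, one could graft a slightly larger strict subsolution onto $u$ near $x_0$, contradicting that $u$ is the largest subsolution. Hence $|\nabla u|^2\ge f$ on $\partial\Omega(u)$ in the viscosity sense as well, so $u$ is a viscosity solution of \eqref{eq:FB}.

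\emph{(ii).} Since $u\ge 0$ is globally subharmonic (as a subsolution, $\Delta u\ge 0$ across $\partial\Omega(u)$), it suffices to bound $u$ linearly by $\operatorname{dist}(\cdot,\partial\Omega(u))$. For $x_0\in\Omega(u)$ with $d=\operatorname{dist}(x_0,\partial\Omega(u))$ small, $u$ is harmonic in $B_d(x_0)$, so interior gradient estimates give $|\nabla u(x_0)|\le C\,u(x_0)/d$, and it remains to bound $u(x_0)/d$. That comes from comparison with an explicit barrier of slope $\sqrt\Lambda$ --- the radial solution of the one-phase problem with constant coefficient $\Lambda$ in an annulus, or a tilted half-plane solution --- anchored at a nearby free boundary point, using $f\le\Lambda$ and the maximum principle. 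This yields $|\nabla u|\le C(\Lambda)$ on $\overline{\Omega(u)}$ away from $\partial D$.

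\emph{(iii).} This is the crux and the one place where $n=2$ is genuinely used. I must produce $c>0$ with $\sup_{B_r(x_0)}u\ge c\,r$ for every $x_0\in\partial\Omega(u)$ and small $r$ (with $B_r(x_0)$ disjoint from $\overline D$). The starting observation is that no connected component $E$ of $\Omega(u)$ can be compactly contained in $\mathbb{R}^2\setminus\overline D$: on such an $E$, $u$ is harmonic and vanishes on all of $\partial E\subseteq\partial\Omega(u)$, hence $u\equiv 0$ on $E$. Therefore the components of $\Omega(u)\cap B_r(x_0)$ accumulating at $x_0$ must reach $\partial B_r(x_0)$, so one of them has diameter $\gtrsim r$; in two dimensions this forces $\mathcal H^{1}\bigl(\partial\Omega(u)\cap B_t(x_0)\bigr)\gtrsim t$ for all $t\le r$. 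Combining this with the supersolution bound $|\nabla u|\ge\sqrt f\ge\sqrt\lambda$ on the free boundary from (i) gives $\mu(B_t(x_0))\gtrsim\sqrt\lambda\,t$ for the Riesz measure $\mu=\Delta u$; feeding this into the identity $\tfrac{d}{ds}\bigl(\tfrac1{2\pi s}\int_{\partial B_s(x_0)}u\bigr)=\tfrac{\mu(B_s(x_0))}{2\pi s}$ together with $u(x_0)=0$ yields $\sup_{B_r(x_0)}u\ge\tfrac1{2\pi r}\int_{\partial B_r(x_0)}u\ge c\,r$. In $\mathbb{R}^n$ with $n\ge 3$ the positivity set can reach $\partial B_r(x_0)$ through a thin tube whose $(n{-}1)$-dimensional boundary is negligible, the perimeter lower bound fails, and one is left only with a growth rate of order $r^{n-1}$ as noted in the introduction --- this is why non-degeneracy does not survive.

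\emph{(iv).} From (ii)--(iii) I would derive by a standard comparison argument the two-sided density bounds at free boundary points: there is $c_0\in(0,\tfrac12)$ with
\[
c_0\le\frac{|\Omega(u)\cap B_r(x_0)|}{|B_r(x_0)|}\le 1-c_0 \qquad\text{for }x_0\in\partial\Omega(u),\ r\text{ small},
\]
the lower bound from non-degeneracy plus interior estimates, the upper bound because if $\Omega(u)$ nearly filled $B_r(x_0)$ the maximum principle would push the free boundary off $x_0$. For the topological statement, the ``no compactly contained component'' observation applied to $\Omega(u)$, together with a symmetric argument for $B_\rho(x_0)\setminus\overline{\Omega(u)}$, a separation argument in the disc, and the Lipschitz modulus (which prevents oscillation at scale $\rho$), shows that in a small ball exactly two of these sets meet $\partial B_\rho(x_0)$: a single component $\Gamma$ of $\Omega(u)$ and one positively dense complementary component. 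The main obstacle throughout is step (iii): everything else is a quantitative packaging of standard elliptic tools, but extracting non-degeneracy from maximality genuinely requires the two-dimensional topology of the positivity set.
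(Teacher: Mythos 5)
Your overall blueprint---Perron's method for (i), barriers for (ii), Green's representation/Riesz measure for (iii), and a topological/density argument for (iv)---tracks the paper's structure, and parts (i)--(ii) are close enough in spirit (for (ii) the paper argues purely from maximality, building a larger subsolution $\max\{u,w\}$ out of a logarithmic barrier, rather than comparing against a supersolution of slope $\sqrt\Lambda$; both are workable). But the argument you give for (iii), the part you yourself flag as the crux, has a genuine gap. You pass from ``$\Omega(u)\cap B_r(x_0)$ has a component of diameter $\gtrsim r$, hence $\mathcal H^1(\partial\Omega(u)\cap B_t(x_0))\gtrsim t$'' to ``$\mu(B_t(x_0))\gtrsim\sqrt\lambda\,t$ for $\mu=\Delta u$'' by invoking the free boundary inequality $|\nabla u|\ge\sqrt f$. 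That inequality, in the viscosity sense, is available only at free boundary points possessing an outer tangent ball; nothing guarantees these points carry positive $\mathcal H^1$-measure, and without linear growth there is no pointwise density relation between $\mu$ and $\mathcal H^1\lfloor\partial\Omega(u)$. In fact non-degeneracy is special to the largest subsolution and fails for arbitrary viscosity solutions, so any correct proof of (iii) must lean on maximality somewhere beyond the remark that $\Omega(u)$ has no compactly contained components (which concerns the positivity set, not the zero set, and is in any case not what produces outer tangent balls). The paper fills exactly this gap with three lemmas: Lemma~\ref{lemnonemptynter} uses maximality (harmonic replacement in a disk would beat $u$) to show $\partial B_r(0)\cap\{u=0\}^o\ne\emptyset$ at every scale, producing outer tangent balls; Theorem~\ref{linearGrowth} uses maximality and the slope lower bound $\sqrt\lambda$ to obtain genuine linear growth at each tangency; and Lemma~\ref{LemLapRinB} runs a Vitali-type covering of a ray by the tangent-ball radii together with an energy comparison between $u$ and its harmonic replacement to show $\int_{B_1}\Delta u\sim 1$. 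Only after that does the Green's function (equivalently, your mean-value identity) argument close. Your sentence compresses these three steps, and the compression is where the proof is missing.

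For (iv) the sketch is also too coarse to be counted as a proof. The ``no compactly contained component'' observation and a two-sided density bound do not by themselves rule out $\Omega(u)\cap B_1$ having two long, thin, non-degenerate tongues. The paper must first prove non-degeneracy \emph{component by component} (Lemma~\ref{NonDg_in_Gamma}, Theorem~\ref{nontrivialgrowth}, themselves resting on Lemma~\ref{zerosetlinearly}), and then excludes a second component via the Alt--Caffarelli--Friedman monotonicity formula (Theorem~\ref{MonoctyThm}): two components with nontrivial linear growth would force both angular traces to be close to $\pi$ on a dyadic annulus, which contradicts the separation $\gtrsim\ep/C$ forced by the Lipschitz bound at that scale (Theorem~\ref{thmplanegrowth}, Corollary~\ref{onecomponent}). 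The positively dense complement is likewise obtained via maximality (Lemma~\ref{biggradient for2planes}), not just a maximum-principle push. Neither the monotonicity formula nor the component-wise non-degeneracy appears in your sketch, and ``a separation argument in the disc'' does not substitute for them.
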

\begin{proof}
  Proof of the theorem will be given separately in next sections starting with Section \ref{ChapLipNonDEg}.

\end{proof}

\section{On Definitions of Viscosity Solutions }\label{Definitions}

In this section, we review the definitions of viscosity subsolution, supersolution, and
solution of \eqref{eq:FB} as in \cite{CaLee07}.
\begin{DEF}\label{Def1} $u$ is a viscosity subsolution \index{viscosity solution @\emph{viscosity solution}! subsolution @\emph{ subsolution}} of \eqref{eq:FB}
 if $u$ is a continuous function in $\mathbb{R}^2\setminus D$ and satisfies the
following conditions:
\begin{enumerate}
  \item $ \triangle u \geq 0, \; \text{in} \; \Omega(u) \backslash D,$
  \item $u \leq g(x)$, on $\partial D,$
  \item \emph{Free Boundary Condition(FBC):} If $x_0 \in  \partial \Omega(u)$ has a tangent ball from
outside of $\Omega(u)$, then  $|\nabla u(x_0)|^2 \geq f(x_0);$ that is, for $\nu$ is the
normal unit vector inward to $\Omega(u)$ at $x_0$,
if $u(x)\leq
\al \langle x-x_0,\nu \rangle^+ + o(|x-x_0|)$ in a neighborhood of $x_0$, then $\al \geq \sqrt{f(x_0)}$.
 \end{enumerate}

\end{DEF}

\begin{DEF}\label{Def2} $u$ is a viscosity supersolution \index{viscosity solution @\emph{viscosity solution}! supersolution @\emph{supersolution}} of \eqref{eq:FB}
 if $u$ is a continuous function in $\mathbb{R}^2\setminus D$ and satisfies the
following conditions:
\begin{enumerate}
  \item $ \triangle u \leq 0, \; \text{in} \; \Omega(u),$
  \item $u \geq g(x)$, on $\partial D,$
  \item  \emph{Free Boundary Condition(FBC):} If $x_0 \in  \partial \Omega(u)$ has a tangent ball from
inside of $\Omega(u)$, then  $|\nabla u(x_0)|^2 \leq f(x_0);$ that is,
for $\nu$ is the
normal unit vector inward to $\Omega(u)$ at $x_0$, if $u(x)\geq
\al \langle x-x_0,\nu \rangle^+ + o(|x-x_0|)$ in a neighborhood of $x_0$, then $\al \leq \sqrt{f(x_0)}$.
 \end{enumerate}
Moreover, $u$ is a viscosity solution \index{viscosity solution @\emph{viscosity solution}} of \eqref{eq:FB} if it is both a
viscosity sub- and supersolution of \eqref{eq:FB}.
\end{DEF}
Heuristically,  Definitions \ref{Def1} and \ref{Def2} imply the following facts for a viscosity solution, $v$, of \eqref{eq:FB}:
\begin{itemize}
  \item By Definition \ref{Def1}: If $x_0 \in  \partial \Omega(v)$ has a tangent ball from
outside of $\Omega(v)$, then whenever $v(x)$ is touched by a plane from above at $x_0$, then the slope of the plane should be at least $ \sqrt{f(x_0)}$.
  \item By Definition \ref{Def2}: If $x_0 \in  \partial \Omega(v)$ has a tangent ball from
inside of $\Omega(v)$, then whenever $v(x)$ is touched by a plane from below at $x_0$, then the slope of the plane should be at most $ \sqrt{f(x_0)}$.
\end{itemize}

From now on, sub- or super-solution of an equation mean being a sub- or super solution in the viscosity sense.
Here is an example for a simple case in order to see the heuristic picture:

\begin{ex}\label{Exm1}
 If we consider \eqref{eq:FB} with the conditions as $D=B_1(0)$, $g(x)\equiv 1$ and $f(x)\equiv 2$, then a viscosity solution can be obtained by a chopped-up harmonic (subharmonic) function in $\mathbb{R}^2 \setminus B_1(0)$ such as $w(x)=\ds( \frac{-1}{\ln(R)}\ln(\frac{|x|}{R}))_+$, shown in Figure \ref{HarmnResim}, where $R>1$ satisfies
 $|\nabla w(x)|^2= \ds \frac{1}{R^2\ln^2(R)}=2 \; \text{on} \; \partial B_R(0).$
\begin{figure}
  \includegraphics[width=2.5in]{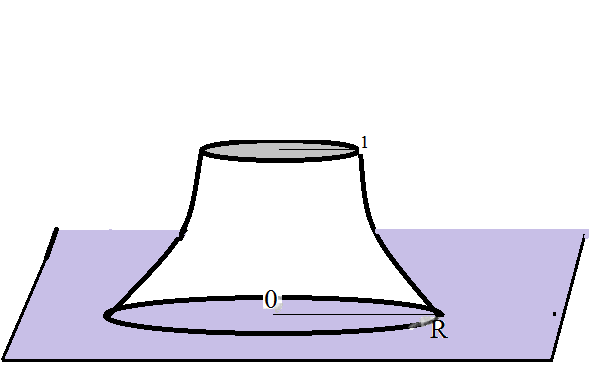}\\
  \caption{w(x) in Example \ref{Exm1}}\label{HarmnResim}
\end{figure}

\end{ex}

 In order to obtain the largest subsolution of \eqref{eq:FB}, we use Perron's method for viscosity solutions (See \cite{CrIshiiLions}). For a fixed supersolution $v$, if we take the largest subsolution $u$ which is smaller than $v$, then we obtain a solution by Perron's method. Thus, let us construct a supersolution of \eqref{eq:FB} by taking a
suitable
harmonic function. Without loss of generality, we assume that $0 \in D$. Then, let us take two balls $B_{R_0}(0)$ and $B_r(0)$ both containing $D$. Let $h(x)=\ds (\frac{1}{\ln(r/R_0)}\ln(\frac{|x|}{R_0}))_+$ be the subharmonic function in $\mathbb{R}^2\setminus D$, then choose $R_0$ as
 $$R_0=\ds \inf\{R| R>r \;\text{and}\;\ds \frac{1}{R^2 \ln^2(r/R)} <\lambda \}.$$ Then, we get $|\nabla h(x)|^2= \ds \frac{1}{R_0^2 \ln^2(r/R_0)} =\lambda$ on $\partial B_{R_0}(0)$. Hence, $h(x)$ is a supersolution of \eqref{eq:FB}.  Therefore, any subsolution should be smaller than $h$ by the comparison principle.
From now on, let us denote $$u= \sup \{v\in
\mathcal{C}(\mathbb{R}^2\setminus D)| v \leq h \;\text{ and v is a
subsolution of \eqref{eq:FB}  } \}.$$
By Perron's Method, $u$ is a viscosity solution of \eqref{eq:FB} and it is the largest subsolution \index{viscosity solution @\emph{viscosity solution}! largest subsolution @\emph{ largest subsolution}} of \eqref{eq:FB}.
\section{Lipschitz and Non-Degeneracy Properties}\label{ChapLipNonDEg}
In this section, we focus on the regularity properties of the largest subsolution, $u$, of \eqref{eq:FB}. Lipschitz regularity implies a uniform bound on the gradient of $u$ and this property is valid in any dimension. On the other hand, the Non-Degeneracy property is restricted to two-dimensional case and this is one of the main difficulties for the subsolution theory. In higher dimensions, $\mathbb{R}^n$, one can obtain the order of $r^{n-1}$ in $B_r(x_0)$ for the lower bound of the growth rate near a free boundary point $x_0$, i.e. $\ds\sup_{B_r(x_0)} u \geq c r^{n-1}$. In $\mathbb{R}^2$, this corresponds to nontrivial linear growth rate.
\subsection{Lipschitz Property}
\begin{thm}\label{LipscThm}
 $u$ is Lipschitz \index{Lipschitz @\emph{Lipschitz}} .
\end{thm}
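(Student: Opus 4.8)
The plan is to establish a uniform bound $|\nabla u|\le L$ on $\Omega(u)$ away from any fixed neighborhood of $D$; since $u$ is continuous and vanishes on $\partial\Omega(u)$, this gives Lipschitz continuity of $u$ in $\mathbb{R}^2\setminus D$ away from $\partial D$ (and up to $\partial D$ once $g$ is Lipschitz, via classical boundary regularity for harmonic functions, or a Lipschitz barrier at $\partial D$). Because $u$ is a viscosity solution of \eqref{eq:FB} it is simultaneously viscosity sub- and superharmonic in $\Omega(u)\setminus D$, hence harmonic there in the classical sense, so the interior gradient estimate for harmonic functions gives
\[
|\nabla u(y)|\ \le\ \frac{C}{d(y)}\,\sup_{B_{d(y)}(y)}u,\qquad d(y):=\operatorname{dist}(y,\partial\Omega(u)\cup\partial D).
\]
Together with the a priori bound $u\le h$, hence $\|u\|_\infty\le\|h\|_\infty<\infty$, this already controls $|\nabla u(y)|$ whenever $d(y)$ is bounded below. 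Thus everything reduces to a \emph{linear growth estimate at the free boundary}: there is $C=C(\Lambda)$ such that $u(y)\le C\,\rho$ for every $y\in\Omega(u)$ with $\rho:=\operatorname{dist}(y,\partial\Omega(u))$ and $B_{2\rho}(y)\cap D=\emptyset$.

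To prove this estimate I would use a Hopf-type barrier at an interior tangent ball, which is where the free boundary condition enters. For such a $y$ one has $B_\rho(y)\subset\Omega(u)\setminus D$, and $\overline{B_\rho(y)}$ meets $\partial\Omega(u)$ at a nearest point $x_0$, so $B_\rho(y)$ is a ball tangent to $\Omega(u)$ from the \emph{inside} at $x_0$. In $B_\rho(y)$ the function $u$ is positive and harmonic with $u(x_0)=0$; the Harnack inequality gives $u\ge c\,u(y)$ on $B_{\rho/2}(y)$, and the classical Hopf barrier in the annulus $B_\rho(y)\setminus\overline{B_{\rho/2}(y)}$ yields $u(x)\ge c_0\frac{u(y)}{\rho}\langle x-x_0,\nu\rangle + o(|x-x_0|)$ for $x\in B_\rho(y)$ near $x_0$, with $\nu$ the inward normal at $x_0$ and $c_0$ universal. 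Since $B_\rho(y)$ is tangent at $x_0$ to $\{\langle x-x_0,\nu\rangle=0\}$, a point of a small neighborhood of $x_0$ lying outside $B_\rho(y)$ satisfies $\langle x-x_0,\nu\rangle\le \tfrac{1}{2\rho}|x-x_0|^2=o(|x-x_0|)$, so using $u\ge0$ there one upgrades this to $u(x)\ge\alpha\,\langle x-x_0,\nu\rangle^+ + o(|x-x_0|)$ in a \emph{full} neighborhood of $x_0$, with $\alpha=c_0 u(y)/\rho$. As $u$ is a supersolution and $x_0$ has the interior tangent ball $B_\rho(y)$, Definition \ref{Def2}(3) forces $\alpha\le\sqrt{f(x_0)}\le\sqrt{\Lambda}$, i.e. $u(y)\le(\sqrt{\Lambda}/c_0)\,\rho$. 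I expect this upgrade — passing from the one-sided Hopf bound along the normal ray to an honest competitor $\alpha\langle x-x_0,\nu\rangle^+ + o(|x-x_0|)$ valid in a two-dimensional neighborhood, so that the viscosity FBC may legitimately be invoked — to be the only genuinely delicate point; it is precisely the interior-tangent-ball geometry that makes it work (cf. the analogous lemmas in \cite{CaffSalsa}).

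Finally I would assemble the pieces. Fix $\delta_0>0$ and take $y\in\Omega(u)$ with $\operatorname{dist}(y,D)\ge\delta_0$, writing $\rho=\operatorname{dist}(y,\partial\Omega(u))$. If $\rho\ge\delta_0/2$, then $B_{\delta_0/2}(y)\subset\Omega(u)\setminus D$ and the interior estimate gives $|\nabla u(y)|\le (2n/\delta_0)\|h\|_\infty$. If $\rho<\delta_0/2$, then $B_{2\rho}(y)\cap D=\emptyset$, so the linear growth estimate applies; combining it with Harnack on $B_{\rho/2}(y)$ gives $\sup_{B_{\rho/2}(y)}u\le C'\rho$, and then the interior estimate at $y$ with radius $\rho/2$ gives $|\nabla u(y)|\le L$ for a universal $L$. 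Since $\nabla u\equiv0$ on the interior of $\{u=0\}$ and $u$ is continuous, this proves $u$ is Lipschitz on $\{\operatorname{dist}(\cdot,D)\ge\delta_0\}$ for every $\delta_0>0$; with $g$ Lipschitz, boundary regularity for harmonic functions extends this up to $\partial D$. I would also remark that this argument uses only the upper bound $f\le\Lambda$ and only the supersolution property of $u$ (which it has as a Perron solution), so, unlike Non-Degeneracy, Lipschitz regularity does not rely on maximality of $u$ and is insensitive to the weak, possibly highly oscillatory, nature of $f$ — and is dimension-independent.
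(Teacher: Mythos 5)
Your proof is correct, but it follows a genuinely different path than the paper's. The paper argues by contradiction from \emph{maximality}: supposing $u(x_0)>M$ at a point $x_0$ at unit distance from the free boundary, it uses Harnack to get $u\ge cM$ on $B_{1/2}(x_0)$, builds the chopped logarithmic barrier
\[
w(x)=cM\Bigl(\tfrac{\ln(|x-x_0|/(1+\ep))}{\ln(1/2)}\Bigr)_+,
\]
harmonic in the annulus $B_{1+\ep}(x_0)\setminus B_{1/2}(x_0)$ with $|\nabla w|>\sqrt{\Lambda}$ on $\partial B_{1+\ep}(x_0)$ once $M$ is large, and shows $\max(u,w)$ is an admissible subsolution strictly larger than $u$, contradicting $u$ being the largest subsolution. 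You instead invoke the viscosity \emph{supersolution} free boundary condition (Definition~\ref{Def2}(3)) at the foot $x_0$ of the interior tangent ball $B_\rho(y)$, via the classical Harnack--plus--Hopf barrier, to force $c_0 u(y)/\rho\le\sqrt{\Lambda}$. Both arguments establish the same linear growth estimate $u(y)\le C\,\operatorname{dist}(y,\partial\Omega(u))$ and finish with the interior gradient estimate for harmonic functions. The trade-off: the paper's argument uses only the subsolution structure of $u$ together with maximality, so it does not lean on the Perron assertion that $u$ is also a supersolution; yours is the standard Alt--Caffarelli/Caffarelli--Salsa argument, applies to \emph{any} viscosity supersolution harmonic in its positivity set, and makes no use of maximality, as you rightly observe. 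You also correctly flag the behaviour near $\partial D$, which the paper's proof leaves implicit, and the step you single out as delicate --- upgrading the one-sided Hopf bound to a competitor valid in a full two-dimensional neighborhood using $u\ge 0$ together with the quadratic tangency of the sphere to its tangent plane --- is handled correctly.
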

\begin{proof} Since this is a one-phase problem, it is enough to show that
$u(x)\leq C d(x,\partial \Omega(u)),$ for some universal constant $C>0$
and every
$x\in \Omega(u).$ Because of the local estimates on derivatives for harmonic functions, we have
$$|\nabla u (x_0)| \leq \frac{C_1}{r^3}\|u\|_{L_1(B_r(x_0))}.$$
If we can show that $u(x)\leq C d(x,\partial \Omega(u)),$ then we will obtain $|\nabla u (x_0)| \leq C_1C,$ for some universal constant $C>0$. Actually, this result implies more than Lipschitz property, instead we obtain a uniform bound on the gradient of $u$. Let us prove that $u(x)\leq C d(x,\partial \Omega(u)),$ for some universal constant $C>0$.
\\By way of contradiction: Assume that there exists a point $x_0 \in \Omega(u) $ with
$d(x,\partial \Omega(u))=1 $ such that $u(x_0)>M$, for some large $M.$ Since
$u$ is harmonic in
$\Omega(u)$ and positive, by the Harnack inequality, we have $\ds
\inf_{B_{1/2}(x_0)}u(x) \geq cM,\; \text{for some}\; c>0.$
Let us define $w(x)$, shown in Figure \ref{ResimLipschtz},
$$w(x)= \left\{\begin{array}{rl}
                 cM(\dfrac{\ln (|x-x_0|/(1+\ep))}{\ln(1/2)})_+, & x\in
\mathbb{R}^2\setminus B_{1/2}(x_0) \\\\
                 cM, & x\in B_{1/2}(x_0)
               \end{array}
\right.$$
then $w$ is
harmonic in
$B_{1+\ep}(x_0)\setminus B_{1/2}(x_0)$.
\begin{figure}
  \includegraphics[width=2.5in]{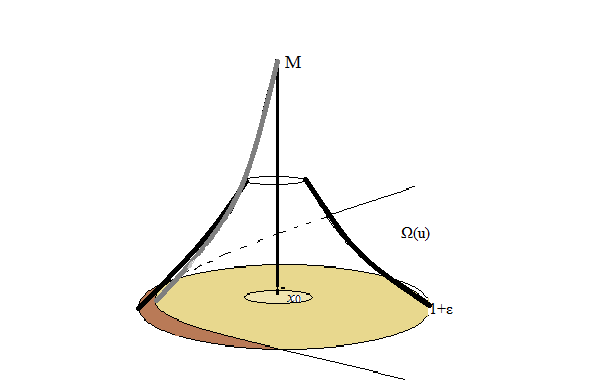}\\
  \caption{w(x) in the proof of Theorem \ref{LipscThm}}\label{ResimLipschtz}
\end{figure}

Consider $v=\max(u,w)$, we claim that it is a subsolution of \eqref{eq:FB} larger than $u$ if we choose $M$ large enough. $v$ is a harmonic function in $\Omega(v)\backslash D$ and $v =u \leq g$ on $\partial D$, so we only need to show that it satisfies the FBC, $(iii)$ in Definition \ref{Def1}. Let $y_0 \in \partial \Omega(v)$ and it has a tangent ball from outside of $\Omega(v)$. If $y_0 \in \partial \Omega(u)$, then $u$ already satisfies the FBC, so does $v$. If $y_0 \in \partial \Omega(w)$, then $|\nabla w|^2(y_0)= \ds [\frac{CM(1+\ep)}{\ln(1/2)}]^2 $.
We can take $M$ as large as we wish in order to make $|\nabla w|^2(y_0)= \ds [\frac{CM(1+\ep)}{\ln(1/2)}]^2 > \Lambda$ which implies, again, $v$ is a
subsolution of
\eqref{eq:FB}. Moreover, $v$ is larger than $u$ which
contradicts to $u$ being
 the largest subsolution. Hence, the result follows.
\end{proof}
\subsection{Non-Degeneracy Property}\label{SEctionNOnDeg}
 Notational Comment: We write $A \sim B$ to mean that, for some universal constants $m, M>0$,
$m\cdot B \leq A \leq M\cdot B.$
\par We will prove that $u$ is Non-Degenerate, i.e. there exists a universal constant
$\kappa>0$  such that $\ds \sup_{B_r(x_0)} u(x)\geq
\kappa r$, for every $x_0 \in \partial\Omega(u),$ by going rigorously through heuristic observations:

  \par $\diamond$ For $x_0 \in \partial\Omega(u)$, we estimate $u(x)$ in $B_r(x_0)$ by Green's representation theorem:
 $$ \begin{array}{ccc}
 0= u(x_0) & = & \ds\int_{\partial B_{r}(x_0)}u G_\nu ds + \ds\int_{
B_{r}(x_0)}(G  \Delta u)dx,
\end{array}
$$
where $G(y,x_0)= \ds\frac{1}{2\pi} \ln \ds(\frac{1}{r}|y-x_0|)$ with $
G \equiv 0$ on $\partial B_{r}(x_0)$. Thus
 $$ \begin{array}{ccc}
-\ds\int_{\partial B_{r}(x_0)}u G_\nu ds &=&  \ds\int_{
B_{r}(x_0)}(G  \Delta u)dx.
\end{array}
$$
  Therefore, we will estimate $u(x)$ in $\partial B_r(x_0)$ by $\ds\int_{
B_{r}(x_0)}  \Delta u dx,$ i.e. by the total mass of $\Delta u$ in $B_r(x_0)$. \par First, we will show that, for the normalized problem:
\par $\diamond$ The total
mass of $\Delta u \sim 1$ in $B_1(0)$ if we assume that $0 \in \partial \Omega (u)$. In order to prove this, we need to show that there exist some constants $c,C>0$ such that
$c \leq \ds\int_{
B_{1}(0)}  \Delta u dx \leq C.$
\begin{itemize}
\item For the upper bound of $\ds\int_{ B_1(0)}(\Delta u)dx$:
\par Since $u$ is Lipschitz in $B_1(0)$, by the Divergence theorem, we have
$$\ds\int_{ B_1(0)}(\Delta u)dx = \ds\int_{\partial B_1(0)}u_\nu dx,$$
we estimate $u_\nu$ by the first order incremental quotient for $x_0 \in \partial B_1(0)$ and obtain the upper bound .
 \item The proof of the lower bound is very technical but the idea is the following:
 \begin{itemize}
   \item $u$ is harmonic in $B_1(0)\cap  \Omega (u)$ so $\ds\int_{ B_1(0)}(\Delta u)dx$ will be nonzero only on $B_1(0)\cap  \partial\Omega (u)$.
   \item If we can show that there exists a partition of the interval $(0,1)$ with some mutually disjoint intervals of the form $(y_0-r_y,y_0+r_y)$ where $\ds\int_{ B_{r_y}(y_0)}(\Delta u)dx \geq c r_y$, then we get $$\ds\int_{ B_1(0)}(\Delta u)dx \geq \ds \sum_{\text{the partition of}\; (0,1)}\ds\int_{ B_{r_y}(y_0)}(\Delta u)dx \geq \sum_{\text{the partition of}\; (0,1)} c r_y= c/2. $$
   \item Thus, in order to obtain the above partition, firstly, we show that
 \begin{itemize}
 \item Lemma: If $x_0\in \overline{\Omega(u)}$ with $d=d(x_0,\partial \Omega(u))$, then
for any $d<r<1$, we have $\partial B_r(x_0)\cap\{u=0\}^o\neq \emptyset$. Since $0 \in \partial \Omega(u)$, this Lemma implies that for any $r \in (0,1)$, we have $\partial B_r(0)\cap\{u=0\}^o\neq \emptyset$. Thus, we can obtain a tangent ball from outside to $\Omega(u)$ for any $r \in (0,1)$. These tangent ball radii will be our candidate partition elements.
 \item Theorem: If $x_0 \in \partial\Omega(u)$ has a
tangent ball from outside of $\Omega(u)$, say $B_r(y)\subseteq \Omega^c(u)$, then $u$ grows linearly in $B_r(x_0)$. This theorem will imply the lower bound of $\triangle u$ in $B_r(x_0)$.

\end{itemize}\end{itemize}

\end{itemize}
We shall show these in detail in a series of lemmas.

\begin{lem}\label{lemnonemptynter}
 Let $x_0\in \overline{\Omega(u)}$ with $d=d(x_0,\partial \Omega(u))$, then we have
$\partial B_r(x_0)\cap\{u=0\}^o\neq \emptyset$, for any $d<r< 1$, as shown in Figure \ref{ResimLemma32}.

\begin{figure}
\includegraphics[width=2in]{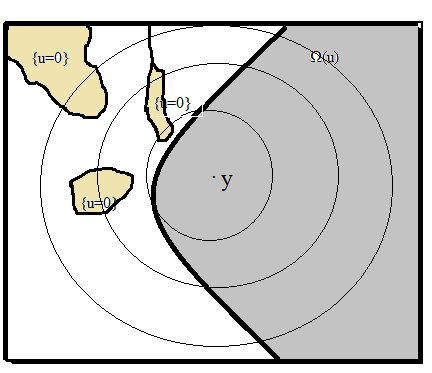}\\
  \caption{Lemma \ref{lemnonemptynter}}\label{ResimLemma32}
\end{figure}
\end{lem}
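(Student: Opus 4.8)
The plan is to argue by contradiction, leveraging the maximality of $u$: if some circle $\partial B_r(x_0)$ lay entirely inside $\overline{\Omega(u)}$, I would replace $u$ inside $B_r(x_0)$ by its harmonic lift and show that the result is again a subsolution below $h$ which strictly exceeds $u$, contradicting the definition of $u$. Throughout I work in the regime $\overline{B_1(x_0)}\cap\overline D=\emptyset$, which is the only case in which the lemma is used (and which is reached after rescaling, with $1$ playing the role of a small fixed radius). I first record two elementary facts: $u$ is subharmonic in $\mathbb{R}^2\setminus D$ — being non-negative, continuous, and harmonic in $\Omega(u)$, it satisfies the sub-mean-value property — and $\mathbb{R}^2\setminus\{u=0\}^o=\overline{\Omega(u)}$.

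Suppose, for contradiction, that $\partial B_r(x_0)\cap\{u=0\}^o=\emptyset$ for some $d<r<1$, i.e. $\partial B_r(x_0)\subseteq\overline{\Omega(u)}$. I would first note that $u\not\equiv 0$ on $\partial B_r(x_0)$: otherwise subharmonicity and the maximum principle force $u\equiv 0$ on $B_r(x_0)$, which is incompatible with $x_0\in\overline{\Omega(u)}$. Let $\tilde u$ coincide with $u$ outside $B_r(x_0)$ and be the harmonic function in $B_r(x_0)$ with boundary data $u|_{\partial B_r(x_0)}$. Then $\tilde u$ is continuous, $\tilde u\ge u$ everywhere (subharmonicity of $u$), $\tilde u>0$ on $B_r(x_0)$ (strong maximum principle, using the previous remark), and $\tilde u\not\equiv u$: if $\tilde u\equiv u$ on $B_r(x_0)$ then $u$ would be positive and harmonic there, forcing $B_r(x_0)\subseteq\Omega(u)$ and hence $d\ge r$, a contradiction.

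It remains to verify that $\tilde u$ is a subsolution of \eqref{eq:FB}. Since $\tilde u>0$ on $B_r(x_0)$, one has $\Omega(\tilde u)=\Omega(u)\cup B_r(x_0)$, so $\partial\Omega(\tilde u)$ meets no point of $B_r(x_0)$; combined with $\partial B_r(x_0)\subseteq\overline{\Omega(u)}$ this gives $\partial\Omega(\tilde u)\subseteq\partial\Omega(u)$, with $\tilde u=u$ on $\partial\Omega(\tilde u)$. Conditions (1) and (2) of Definition \ref{Def1} hold at once ($\tilde u$ is harmonic where positive, and $\tilde u=u\le g$ on $\partial D$ since $B_r(x_0)$ avoids $D$), so only the free boundary condition (3) needs checking. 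If $y_0\in\partial\Omega(\tilde u)$ has a tangent ball $B'$ from outside $\Omega(\tilde u)$, then $B'\subseteq\Omega(\tilde u)^c\subseteq\Omega(u)^c$, so $B'$ is also a tangent ball from outside $\Omega(u)$ at $y_0$ with the same inward normal $\nu$; hence any bound $\tilde u(x)\le\alpha\langle x-y_0,\nu\rangle^++o(|x-y_0|)$ near $y_0$ forces the same bound on $u$ (as $0\le u\le\tilde u$), so $\alpha\ge\sqrt{f(y_0)}$ by the subsolution property of $u$. Thus $\tilde u$ is a subsolution; by the comparison principle for \eqref{eq:FB} applied to the subsolution $\tilde u$ and the supersolution $h$ (ordered on $\partial D$, since $\tilde u=u\le h$ there, and both vanishing at infinity), $\tilde u\le h$. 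So $\tilde u$ is a subsolution below $h$ with $u\le\tilde u\not\equiv u$, contradicting the maximality of $u$.

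I expect the delicate point to be this third step: one must be sure the harmonic surgery introduces no new piece of free boundary at which the condition could fail. The inclusion $\partial B_r(x_0)\subseteq\overline{\Omega(u)}$ is exactly what rules this out, since it forces $\partial\Omega(\tilde u)\subseteq\partial\Omega(u)$; then, because $\tilde u\ge u$ and the relevant inward normal is unchanged, the free boundary condition transfers monotonically from $u$ to $\tilde u$. The remaining ingredients — the subharmonic comparison $\tilde u\ge u$ and the comparison with the barrier $h$ — are standard.
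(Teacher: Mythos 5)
Your proposal is correct and follows essentially the same strategy as the paper: replace $u$ in $B_r(x_0)$ by its harmonic lift, show this is a larger subsolution below the barrier, and contradict maximality. The paper phrases it via $\max\{u,h\}$ rather than $\tilde u$ directly and omits the checks you spell out (that $\tilde u\not\equiv u$, that $\partial\Omega(\tilde u)\subseteq\partial\Omega(u)$, and that $\tilde u$ stays below the barrier), but these fill in the same argument rather than constituting a different route.
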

\begin{proof}
 (By way of contradiction) Assume that there exists $r_0$ such that $\partial B_{r_0}(x_0)\cap\{u=0\}^o=
\emptyset$. Let us define the harmonic function
$h(x)$
as
\begin{equation*}
\begin{cases}
 \triangle h =0 &\text{ in}\; B_{r_0}(x_0), \\
h= u &\text{ in }\;\mathbb{R}^2\setminus B_{r_0}(x_0).\\
\end{cases}
\end{equation*}
By the Maximum principle, $h(x)>0$ in $B_{r_0}(x_0)$ and it is actually a
subsolution: If $h(y)=0$, then $u(y)=0$. If $y\in \partial\Omega(h)$ and has a tangent ball from outside of $\Omega(h)$, then $y\in \partial\Omega(u)$ and since $\partial B_{r_0}(x_0)\cap\{u=0\}^o=
\emptyset$ it has a tangent ball from outside of $\Omega(u)$.
If $h(x)\leq
\al \langle x-y,\nu \rangle^+ + o(|x-y|)$ in a neighborhood of $y$, then we have $u(x) \leq h(x)\leq
\al \langle x-y,\nu \rangle^+ + o(|x-y|)$ so $\al \leq f(y)$ by the FBC, $(iii)$ in Definition \ref{Def1}, satisfied by $u$. This implies $w=\max\{u,h\}$ is a subsolution of \eqref{eq:FB} which is larger than $u$. Contradiction for $u$ being the largest subsolution. Hence, the result follows.
\end{proof}
\begin{thm}\label{linearGrowth} Let $x_0 \in \partial\Omega(u)$ with a
tangent ball from outside of $\Omega(u)$, say $B_r(y)\subseteq \Omega^c(u)$, then $u$ grows linearly \index{
Linear Growth @\emph{Linear Growth}}in $B_r(x_0)$; that is, there exist universal constants $C_1, C_2>0$ such that
$$ C_1 r \leq \sup_{B_r(x_0)} u\leq C_2 r.$$
\end{thm}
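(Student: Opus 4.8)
The plan is to prove the two inequalities separately, with the upper bound being the routine half and the lower bound being where the real work lies. For the upper bound, I would use that $u$ is Lipschitz (Theorem \ref{LipscThm}): since $u(x_0)=0$ and $\operatorname{Lip}(u)\le C$ is universal, any point $x\in B_r(x_0)$ satisfies $u(x)=u(x)-u(x_0)\le C|x-x_0|\le Cr$, so $\sup_{B_r(x_0)}u\le C_2 r$ with $C_2=C$. This requires nothing beyond the Lipschitz estimate already established.

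For the lower bound I would argue by contradiction and by comparison, exploiting that $u$ is the \emph{largest} subsolution. Suppose $\sup_{B_r(x_0)}u=:\sigma r$ with $\sigma$ very small. The idea is to build an auxiliary subsolution that strictly exceeds $u$ somewhere, contradicting maximality. Concretely, I would work in the annulus $A=B_{r}(x_0')\setminus \overline{B_{r/2}(x_0')}$ where $x_0'$ is chosen so that $B_{r/2}(x_0')\subseteq B_r(x_0)\cap \Omega^c(u)$ — such a ball exists because the tangent ball $B_r(y)\subseteq\Omega^c(u)$ gives us room, and after rescaling to unit size this is a fixed geometric configuration. On this annulus let $\varphi$ be the harmonic function equal to $0$ on the inner sphere $\partial B_{r/2}(x_0')$ and equal to a constant $\ge \sigma r$ (comparable to $\sup_{B_r(x_0)}u$, chosen so $\varphi\ge u$ on the outer sphere via the maximum principle and the Lipschitz bound on $u$) on the outer sphere; explicitly $\varphi$ is a multiple of $\log(|x-x_0'|/(r/2))$. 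Then $v=\max(u,\varphi)$ — extended by $u$ outside $B_r(x_0')$ and by the constant inside $B_{r/2}(x_0')$ — is harmonic in $\Omega(v)\setminus D$, lies below $h$, and equals $g$ on $\partial D$. The free boundary condition for $v$ must be checked: a point $z\in\partial\Omega(v)$ with an exterior tangent ball either lies on $\partial\Omega(u)$ (where $u$, hence $v$, already satisfies the FBC) or lies on $\partial B_{r/2}(x_0')\cap\partial\Omega(\varphi)$, where $|\nabla\varphi|$ is comparable to $\sigma r/(r/2 \cdot \log 2)\sim \sigma$, which is \emph{too small} — this is the wrong direction. So as stated this comparison function fails the FBC; the fix, and the heart of the matter, is to reverse the construction: we must instead show that if $u$ were \emph{small} then a \emph{harmonic replacement of $u$ inside $B_r(x_0)$} produces a function whose gradient at the regular point $x_0$ falls below $\sqrt{f(x_0)}$ along the direction into the exterior tangent ball, contradicting not maximality but the fact (via Hopf-type estimates) that the harmonic replacement is still a subsolution strictly above $u$.

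So the cleaner route, which I would actually pursue, is: let $h$ solve $\triangle h=0$ in $B_\rho(x_0)$ for suitable $\rho\sim r$ with $h=u$ outside; as in Lemma \ref{lemnonemptynter}, $w=\max(u,h)=h$ on $B_\rho(x_0)$ is a subsolution provided its FBC holds, and since $w\ge u$ with equality forcing (by maximality) $w\equiv u$, we learn $u$ is already harmonic in a ball — unless the FBC fails, i.e. unless at some regular boundary point the gradient of $h$ is $<\sqrt{f}\,$. Using the exterior tangent ball $B_r(y)\subseteq\Omega^c(u)$ and a barrier from the annulus $B_{2r}(y)\setminus B_r(y)$, the Hopf lemma gives $|\nabla u(x_0)|\ge c\,\frac{1}{r}\sup_{\partial B_{2r}(y)}u$ in the inward normal direction; combined with the forced FBC inequality $|\nabla u(x_0)|\ge\sqrt{f(x_0)}\ge\sqrt\lambda$ at $x_0$ (which \emph{does} have an exterior tangent ball) and the Harnack inequality along the segment from $x_0$ to the far side of $B_r(x_0)$, one extracts $\sup_{B_r(x_0)}u\ge C_1 r$. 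I expect the main obstacle to be making the barrier/Hopf estimate quantitative and universal in this weak-data setting: $f$ is only bounded and measurable, so all constants must come from $\lambda,\Lambda$ and the fixed geometry of the tangent-ball configuration after rescaling, and one must be careful that the harmonic replacement argument does not secretly use regularity of the free boundary that we have not yet proved.
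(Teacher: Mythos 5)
Your upper bound is correct and is exactly what the paper does (Lipschitz gives it immediately). For the lower bound you correctly diagnose that your first construction (the annulus with inner disc $B_{r/2}(x_0')$) goes ``the wrong direction'' — good catch — but your replacement ``cleaner route'' then contains the same sign error in a different place, and this is where the argument breaks.

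The central error is the direction of your barrier/Hopf inequality. You write that a barrier in the annulus $B_{2r}(y)\setminus B_r(y)$ ``gives $|\nabla u(x_0)|\ge c\,\frac{1}{r}\sup_{\partial B_{2r}(y)}u$.'' That is backwards. The exterior tangent ball $B_r(y)$ is exterior to $\Omega(u)$, so it cannot be used as an interior Hopf barrier for $u$; what it does give, since $u$ is subharmonic on $B_r(y)^c$ (extended by $0$) and vanishes on $\partial B_r(y)$, is a comparison $u\le G$ with the radial harmonic function $G$ vanishing on $\partial B_r(y)$ and equal to $\sup u$ on the outer sphere, and hence an \emph{upper} bound $|\nabla u(x_0)|\lesssim \frac{1}{r}\sup u$ on the touching slope at $x_0$. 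Once you write it with the correct sign, the contradiction is immediate and you do not need Harnack at all: if $\sup_{B_r(x_0)}u\le\delta r$, then the comparison slope is $\lesssim\delta$, while $x_0$ has an exterior tangent ball so the viscosity subsolution FBC (Definition~\ref{Def1}(3)) forces that slope to be $\ge\sqrt{f(x_0)}\ge\sqrt\lambda$; taking $\delta$ small contradicts $\lambda\le f\le\Lambda$. As written, you list two lower bounds on the gradient and then invoke Harnack ``along the segment to the far side,'' which is a non-sequitur (Harnack does not apply across the free boundary, where $u$ vanishes) and is unnecessary once the inequality direction is fixed. The intermediate paragraph about harmonic replacement in $B_\rho(x_0)$, ``maximality forcing $w\equiv u$,'' and ``$u$ is already harmonic in a ball'' is also not needed and is not correct as stated (replacement produces a strictly larger function precisely when $\{u=0\}^o$ meets the ball, and maximality rules that out, but none of this is used in the final estimate). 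The paper's proof is exactly the corrected version of your idea in one stroke: take $h(x)=\frac{2\delta r}{\ln 2}\ln\frac{|x-y|}{r}$ on $B_r(y)^c$, note $u\le h$ near $x_0$, so $u$ is touched from above at $x_0$ by a barrier of slope $\frac{2\delta}{\ln 2}$, and the FBC then forces $\big(\frac{2\delta}{\ln 2}\big)^2\ge f(x_0)>\lambda$, contradiction for $\delta$ small. Note in particular that the paper never invokes maximality of $u$ here, only that $u$ is a viscosity subsolution; your appeals to ``largest subsolution'' and to constructing a larger competitor are red herrings for this particular lemma.
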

\begin{proof}
(By way of contradiction) Assume that $x_0 \in \partial\Omega(u)$ with a tangent ball $B_r(y)\subseteq \Omega^c(u) $ from
outside of $\Omega(u)$ and $u$ does not grow linearly in $B_r(x_0)$. Since $u$ is Lipschitz, we already have $u \leq C_2r $ in $B_r(x_0)$, for some $C_2>0$. Hence, suppose that the first inequality is not true, then there exists $\delta>0$ sufficiently small such that
$\sup_{B_{r}(x_0)} u \leq \delta r.$
Let us define the harmonic function
$$h(x)= \ds\frac{2\delta r}{\ln 2}\ln \dfrac{|x-y|}{r} \; \text{in}\; B_r(y)^c.$$ Shown in Figure \ref{Resimlineargrowth}.
\begin{figure}
 \includegraphics[width=2.5in]{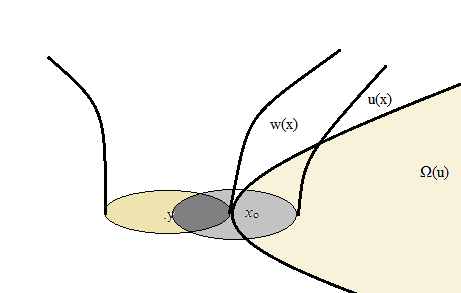}\\
  \caption{$w(x) \geq u(x)$ in $B_r(x_0)\cap \Omega(u)$}\label{Resimlineargrowth}
\end{figure}
Then, $h(x)\geq
u(x)$ in  $B_r(x_0)\cap \Omega(u)$. We can choose $\delta>0$ small enough so that $|\nabla h|^2(x)=[ \ds\frac{2\delta}{\ln 2}]^2< \lambda$ on $\partial B_r(y)$. Then, $u(x) \leq \ds\frac{2\delta}{\ln 2}\langle x-x_0, \nu \rangle + o(|x-x_0|)$ in a neighborhood of $x_0$. Hence, $[\ds\frac{2\delta}{\ln 2}]^2 \geq f(x_0)$ so that we get a contradiction:
 $$\lambda > [\ds\frac{2\delta}{\ln 2}]^2 \geq f(x_0)> \lambda.$$  Hence, there exists $C_1>0$, universal, such that
$ C_1 r \leq \sup_{B_r(x_0)} u.$
\end{proof}

\begin{lem}\label{LemLapRinB} The total
mass of $\Delta u \sim 1$ in $B_1(0)$.
\end{lem}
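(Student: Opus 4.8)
The plan is to establish the two-sided bound $c \le \int_{B_1(0)} \triangle u \, dx \le C$ by proving each inequality separately, as outlined in the heuristic discussion above. For the upper bound I would invoke the Lipschitz regularity of $u$ (Theorem \ref{LipscThm}): since $u \ge 0$ and $u(0) = 0$, $u$ is Lipschitz up to $\partial B_1(0)$, so $|\nabla u|$ is bounded on $\partial B_1(0)$ by a universal constant. The distributional Laplacian $\triangle u$ is a nonnegative measure supported on $\overline{\Omega(u)}$, and applying the divergence theorem on the (Lipschitz) domain $B_1(0)$ gives $\int_{B_1(0)} \triangle u \, dx = \int_{\partial B_1(0)} u_\nu \, ds$; bounding the normal derivative by the Lipschitz constant yields $\int_{B_1(0)} \triangle u \, dx \le C$ with $C$ universal. (One must be slightly careful that $u$ need not be $C^1$ up to the boundary, but a standard approximation — integrating over $B_{1-\ep}(0)$ and letting $\ep \to 0$, using that $\triangle u$ is a finite measure on $\overline{B_1(0)}$ and estimating the flux across $\partial B_{1-\ep}(0)$ via the first-order difference quotient of $u$ — handles this.)

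For the lower bound I would follow the partition strategy. Set $x_0 = 0 \in \partial\Omega(u)$. Using Lemma \ref{lemnonemptynter} with $d = 0$, for every $r \in (0,1)$ we have $\partial B_r(0) \cap \{u=0\}^o \ne \emptyset$; pick such a point $z_r$ and let $B_{\rho}(w) \subseteq \{u=0\}^o$ be a small ball inside the complement of $\Omega(u)$ touching $\overline{\Omega(u)}$. More to the point, I would produce free boundary points with exterior tangent balls of radii comparable to their distance from $0$: walking along the segment from $0$ outward, one finds $y_0 \in \partial\Omega(u) \cap B_1(0)$ together with an exterior tangent ball $B_{r_y}(p) \subseteq \Omega^c(u)$ with $r_y$ bounded below by a fixed fraction of $\mathrm{dist}(y_0, \partial B_1(0))$ (or of $|y_0|$), using Lemma \ref{lemnonemptynter} to guarantee the complement is fat enough. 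For each such $y_0$, Theorem \ref{linearGrowth} gives $\sup_{B_{r_y}(y_0)} u \ge C_1 r_y$. Then I would convert linear growth into a lower bound on the Laplacian mass: since $u$ is harmonic in $\Omega(u) \cap B_{r_y}(y_0)$, vanishes on the exterior tangent ball, and reaches height $\sim C_1 r_y$ at distance $\le r_y$ from $y_0$, a comparison with an explicit harmonic barrier (logarithmic in two dimensions, e.g. the function $\frac{2\delta r}{\ln 2}\ln(|x-p|/r_y)$-type competitor used in Theorem \ref{linearGrowth}, run in the opposite direction) forces $\int_{B_{r_y}(y_0)} \triangle u \, dx \ge c\, r_y$ with $c$ universal — the point being that a harmonic function in the slit ball that is zero on a ball of radius $r_y$ cannot grow linearly unless its boundary flux, hence the total Laplacian mass of its chopped-up extension, is at least of order $r_y$.

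Finally I would extract a disjoint subfamily. The radii $r_y$ associated to points along the radial segment $[0,1) \cdot e$ (for a fixed direction $e$) give intervals, and by a Vitali-type selection I choose countably many mutually disjoint balls $B_{r_{y_i}}(y_i)$ whose projections onto $[0,1)$ cover a fixed fraction of $(0,1)$, say with $\sum_i r_{y_i} \ge 1/2$ after relabeling. Since $\triangle u \ge 0$ and the balls are disjoint,
\[
\int_{B_1(0)} \triangle u \, dx \;\ge\; \sum_i \int_{B_{r_{y_i}}(y_i)} \triangle u \, dx \;\ge\; \sum_i c\, r_{y_i} \;\ge\; \frac{c}{2},
\]
which is the desired lower bound. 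Rescaling this normalized estimate to $B_r(x_0)$ for arbitrary $x_0 \in \partial\Omega(u)$ then gives $\int_{B_r(x_0)} \triangle u \, dx \sim r$, which is exactly what will feed into the Green's representation argument for non-degeneracy.

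I expect the main obstacle to be the lower bound — specifically, organizing the covering/partition argument so that the disjoint exterior tangent balls have total radius bounded below by a universal constant. This is where the two-dimensionality is essential: in $\mathbb{R}^n$ the analogous construction only yields $\int_{B_r} \triangle u \ge c\, r^{n-1}$, and one must be careful that the balls $B_{r_{y_i}}(y_i)$ can genuinely be chosen pairwise disjoint while still covering enough of a one-dimensional segment — a subtlety that forces the argument to live on lines rather than in the full ball. Verifying that Lemma \ref{lemnonemptynter} really supplies exterior tangent balls of radius comparable to the distance scale (and not merely nonempty zero-set intersections) is the other delicate point.
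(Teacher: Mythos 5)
Your overall architecture matches the paper's: upper bound by Lipschitz plus the divergence theorem, lower bound by producing exterior tangent balls along a ray, extracting a Vitali subfamily, and summing a per-ball lower bound on the Laplacian mass. The upper bound argument is essentially identical. For the lower bound the paper likewise uses Lemma~\ref{lemnonemptynter} to produce, for each $r\in(0,1)$, a tangent ball $B_{\ep_r}(x_r)\subseteq\{u=0\}^o$ touching $\partial\Omega(u)$ at some $y_r$; it then \emph{rotates} each such ball about the origin so its center lands on a fixed ray $R_1$ (rather than restricting attention to tangencies lying on a single radial segment), covers $(0,1)$ by the resulting intervals $[|y_r|-\ep_r,|y_r|+\ep_r]$, and runs the Vitali extraction to get a disjoint family with $\sum_j\ep_{r_j}\ge 1/2$, before transporting those balls back to their original positions. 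Two comments. First, the worry you flag --- that the tangent-ball radii should be comparable to the distance from $0$ --- is a red herring: the Vitali argument gives $\sum_j\ep_{r_j}\gtrsim 1$ with no lower bound on the individual $\ep_r$, and Lemma~\ref{lemnonemptynter} neither supplies nor requires such comparability. Second, and this is the one genuine divergence, the per-ball estimate $\int_{B_{\ep_r}(y_r)}\Delta u\,dx\ge c\,\ep_r$ is obtained in the paper not by a barrier comparison but by an energy identity with harmonic replacement: letting $w$ be the harmonic replacement of $u$ in $B_{\ep_r}(y_r)$, Green's identity gives $\int_B|\nabla(u-w)|^2=\int_B(w-u)\Delta u$; the left side is bounded below by $\sim\ep_r^2$ because $w-u=w$ has linear growth on a half-size ball $B_{\ep_r/2}(z_r)$ sitting inside the zero set, while the right side is bounded above by $C\ep_r\int_B\Delta u$ since $0\le w-u\lesssim\ep_r$ by Lipschitz, and combining the two gives the per-ball mass. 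Your barrier/flux sketch is in the right spirit but as written does not close: the total mass equals the boundary flux $\int_{\partial B_{\ep_r}}u_\nu$, which is not immediately bounded below by the pointwise linear growth of $u$ --- one must pass through a spherical average (or, more cleanly, do exactly the harmonic-replacement computation) to turn growth into a quantitative flux bound. The energy identity is what makes that step painless, and I would recommend adopting it.
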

\begin{proof}
First of all, the total
mass of $\Delta u$ in $B_1(0)$ is bounded by above since $u$ is Lipschitz in $B_1(0)$, by the Divergence Theorem, we have
$$\ds\int_{ B_1(0)}(\Delta u)dx = \ds\int_{\partial B_1(0)}u_\nu dx,$$
we estimate $u_\nu$ by the first order incremental quotient for $x_0 \in \partial B_1(0)$ as
$$\begin{array}{lcl}
   \ds  \frac{u(x_0+ s\nu)-u(x_0)}{s}&\leq& \ds  \frac{C|s \nu|}{s} \leq C,
   \end{array}
$$
where $C>0$ is the universal Lipschitz constant of $u$. Therefore,
$$\ds\int_{ B_1(0)}(\Delta u)dx = \ds\int_{\partial B_1(0)}u_\nu ds \leq 2\pi C.$$

Next, we shall determine the lower bound for the total
mass of $\Delta u $ in $B_1(0)$.
Since $0\in \partial\Omega(u)$, by Lemma \ref{lemnonemptynter}, we have  $\partial B_r(0)\cap \{x| u(x)=0\}^o\neq \emptyset$ for any $r \in (0,1)$. Hence, for each radius $r<1$, there exists a ball $B_{\ep_r}(x_r) \subseteq \{x| u(x)=0\}^o$ and it is tangent to $\partial\Omega(u)$ at some point $y_r \in \partial\Omega(u)$. Pick a ray in $B_1(0)$, say  $R_1=\{r \eta | r\in[0,1], \eta \in S^1\}$. Let us use $R_1$ in order to pick a partition of $(0,1)$ in terms of $\ep_r$. Consider $\ds \bigcup_{r\in[0,1]}\{B_{\ep_r}(y_r)\}$ and rotate all these balls until their centers intersects with $R_1$, i.e. each
$B_{\ep_r}(y_r)$ will be transferred to another ball whose center is on $R_1$ and this center point has the length of $|y_r|$ , as shown in Figure \ref{ResimLaplace}.
\begin{figure}
  \includegraphics[width=2.5in]{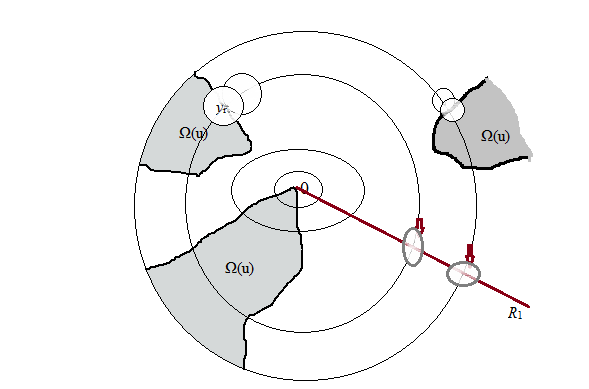}\\
  \caption{Project ball $B_{\ep_r}(y_r)$ onto $R_1$}\label{ResimLaplace}
\end{figure}
Hence, we obtain a covering of $R_1$ by
segments $S_r=[|y_r|-\ep_r,|y_r|+\ep_r]$. Extract a disjoint subfamily $(S_{r_j})$ such that
$2S_{r_j}$ covers $R_1$. Now, resend back this subfamily onto their original places, i.e. consider only the subset of $\ds \bigcup_{r\in[0,1]}\{B_{\ep_r}(y_r)\}$ whose translations are in the disjoint subfamily $(S_{r_j})$. With this subfamily and by using their radii we obtain the mutually disjoint partition of $(0,1)$ mentioned at the beginning of this Section \ref{SEctionNOnDeg}. Next, we show that the total mass of
$\Delta u$ in $B_{\ep_{r_j}}(y_{r_j})$ is at least  $c\ep_{r_j}$ so that when we add them up we get $1/2$ (it is because $2S_{r_j}$ covers $R_1$). In this part of the proof, we will use the linear growth property of $u$ in $B_{\ep_{r_j}}(y_{r_j})$ which is true by Theorem \ref{linearGrowth}. Let $w$ be the harmonic function such that
\begin{equation*}
\begin{cases}
 \triangle w =0 &\text{ in}\; B_{\ep_{r_j}}(y_{r_j}), \\
w= u &\text{ on }\; \partial B_{\ep_{r_j}}(y_{r_j}).\\
\end{cases}
\end{equation*}

By the Divergence theorem, we have
$$
\begin{array}{lcl}
   0&=& \ds\int_{\partial B_{\ep_{r_j}}(y_{r_j})}(u-w) (u-w)_\nu \\& =&  \ds\int_{ B_{\ep_{r_j}}(y_{r_j})}\Delta(u-w)(u-w)+ \nabla^2(u-w)dx \\
&\geq & \ds\int_{ B_{\ep_{r_j}}(y_{r_j})}\Delta(u-w)(u-w)dx + \ds\int_{ B_{\ep_{r_j}/2}(z_{r_j})}\nabla^2(u-w)dx \\
 &\geq & \ds\int_{ B_{\ep_{r_j}}(y_{r_j})}\Delta(u-w)(u-w)dx+ \frac{4C}{\ep^2_{r_j}}\ds\int_{ B_{\ep_{r_j}/2}(z_{r_j})}(u-w)^2 dx \\
 &\geq&  \ds\int_{ B_{\ep_{r_j}}(y_{r_j})}\Delta(u-w)(u-w)dx+ \frac{C_2}{\ep^2_{r_j}}\ds\int_{ B_{\ep_{r_j}/2}(z_{r_j})}(\frac{\ep_{r_j}}{2})^2 dx \\
 &\geq&  \ds\int_{ B_{\ep_{r_j}}(y_{r_j})}\Delta(u-w)(u-w)dx+ C_3 \ep^2_{r_j}, \end{array}
$$
where $z_{r_j} \in \Omega^c(u)$ such that $u\equiv0$ and $w$ grows linearly in $B_{\ep_{r_j}/2}(z_{r_j})$.
Since, $0 < w-u \leq C_1\ep_{r_j} $ and $w$ is harmonic in $ B_{\ep_{r_j}}(y_{r_j})$, we get
$$ \begin{array}{lcl}
  C_4 \ds\int_{ B_{\ep_{r_j}}(y_{r_j})}\ep_{r_j}\triangle u dx  &\geq & -\ds\int_{ B_{\ep_{r_j}}(y_{r_j})}(\Delta(u-w)(u-w))dx   \geq   C_3 \ep^2_{r_j} \\
    \ds\int_{ B_{\ep_{r_j}}(y_{r_j})}(\Delta u)dx &\geq &C_5 \ep_{r_j}.
   \end{array}
$$

Thus, by
adding these $\Delta u$ masses in these balls, $B_{\ep_{r_j}}(y_{r_j})$,
over $r_j$, we get the total mass of $\Delta u$ in
$B_1(0)$ is at least $C_5>0$, i.e.
$$C \geq\ds\int_{ B_1(0)}(\Delta u)dx  \geq \ds\sum_{r_j} \ds\int_{ B_{\ep_{r_j}}(y_{r_j})}(\Delta u)dx\geq C_5.$$
\end{proof}

\begin{thm}\label{NonDegrteinB} $u$ is Non-Degenerate \index{Non-Degenerate @\emph{Non-Degenerate}}, i.e. there exists a universal constant
$\kappa>0$  such that $\ds \sup_{B_r(x_0)} u(x)\geq
\kappa r$, for every $x_0 \in \partial\Omega(u).$
\end{thm}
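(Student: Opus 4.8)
The plan is to convert the Green representation heuristic laid out above into a proof, using Lemma \ref{LemLapRinB} as the one substantial input. First I would reduce to a normalized statement by scaling: for $x_0\in\partial\Omega(u)$ and $r>0$ small enough that $B_r(x_0)$ stays clear of $D$, the rescaled function $u_r(x):=\tfrac1r u(x_0+rx)$ solves a free boundary problem of the form \eqref{eq:FB} whose coefficient $f(x_0+r\,\cdot)$ obeys the same bounds $\lambda<f<\Lambda$, and it inherits from $u$ the property of being the largest subsolution; moreover $0\in\partial\Omega(u_r)$. Hence Theorems \ref{LipscThm} and \ref{linearGrowth} and Lemma \ref{LemLapRinB} apply to $u_r$ with the same universal constants, and it suffices to produce a universal $\kappa>0$ such that $\ds\sup_{B_1(0)}u\geq\kappa$ whenever $0\in\partial\Omega(u)$; undoing the scaling then gives $\ds\sup_{B_r(x_0)}u\geq\kappa r$. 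Note that under the same scaling Lemma \ref{LemLapRinB} upgrades to $\ds\int_{B_\rho(x_0)}\Delta u\,dx\sim\rho$ for every $x_0\in\partial\Omega(u)$ and every admissible $\rho$, with universal comparison constants.

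Next I would apply the Green representation formula on the ball $B_{1/2}(0)$ to $u$, which is nonnegative, continuous and harmonic in $\Omega(u)$, hence subharmonic in $\mathbb{R}^2\setminus D$ with $\Delta u$ a nonnegative measure supported on $\partial\Omega(u)$, finite on compact subsets because $u$ is Lipschitz by Theorem \ref{LipscThm}. Since $u(0)=0$, the representation formula (with $G(y)=\tfrac1{2\pi}\ln(2|y|)$ the Green's function of $B_{1/2}(0)$) reads
\[
\frac{1}{\pi}\int_{\partial B_{1/2}(0)}u\,ds\;=\;\int_{B_{1/2}(0)}\frac{1}{2\pi}\ln\frac{1}{2|y|}\,\Delta u\,dy.
\]
To bound the right-hand side from below I would discard all the mass lying outside $B_{1/4}(0)$ — legitimate because the integrand is nonnegative — and on $B_{1/4}(0)$ estimate the weight from below by $\tfrac{\ln 2}{2\pi}$. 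Combined with the scaled form of Lemma \ref{LemLapRinB}, which gives $\ds\int_{B_{1/4}(0)}\Delta u\,dx\geq c$ for a universal $c>0$, this yields $\ds\frac{1}{\pi}\int_{\partial B_{1/2}(0)}u\,ds\geq\frac{c\ln 2}{2\pi}$.

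Finally, since $\partial B_{1/2}(0)\subset B_1(0)$ and a supremum dominates an average,
\[
\sup_{B_1(0)}u\;\geq\;\sup_{\partial B_{1/2}(0)}u\;\geq\;\frac{1}{\pi}\int_{\partial B_{1/2}(0)}u\,ds\;\geq\;\frac{c\ln 2}{2\pi}\;=:\;\kappa,
\]
which is the desired universal bound. The genuinely hard work is already behind us in Lemma \ref{LemLapRinB} (whose lower bound carried all the free boundary geometry); for the present theorem the only point that still needs care is verifying that every constant used is stable under the rescaling $u\mapsto u_r$, and this is exactly why it matters that $u_r$ remains the largest subsolution of a free boundary problem governed by the same $\lambda$ and $\Lambda$, so that the earlier results transfer to it unchanged.
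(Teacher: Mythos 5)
Your argument is correct and follows essentially the same route as the paper: both proofs rest on the Green representation formula centered at the free boundary point together with the lower bound on the total mass of $\Delta u$ from Lemma \ref{LemLapRinB}, with the logarithmic weight bounded below on the half-radius ball. The only differences are stylistic---the paper runs a contradiction argument and bounds $G_\nu\sim 1/r$ directly on $\partial B_r(x_0)$, whereas you first normalize by scaling and then prove the estimate directly on $B_{1/2}(0)$, making explicit the rescaling invariance that the paper uses tacitly.
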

\begin{proof}(By way of contradiction ) Assume that there exists $x_0 \in \partial\Omega(u)$ with $r>0$ such that
$\ds \sup_{B_r(x_0)} u(x)<
\delta r,$
for some sufficiently small $\delta>0$. By Green's representation theorem,
we have $$
\begin{array}{ccc}
 0= u(x_0) & = & \ds\int_{\partial B_{r}(x_0)}u G_\nu ds + \ds\int_{
B_{r}(x_0)}(G  \Delta u)dx,
\end{array}
$$
where $G(y,x_0)= \ds\frac{1}{2\pi} \ln \ds(\frac{1}{r}|y-x_0|)$ with $
G \equiv 0$ on $\partial B_{r}(x_0)$. Hence, $$\ds\int_{\partial
B_{r}(x_0)}u G_\nu ds =- \ds\int_{ B_{r}(x_0)}(G  \Delta u)dx \geq - \ds\int_{ B_{r/2}(x_0)}G  \Delta u dx. $$

Then, by Lemma \ref{LemLapRinB} and $G(y,x_0) \leq -C_1$ in $B_{r/2}(x_0)$, we get
$$
\begin{array}{ccl}
  \ds\int_{\partial B_{r}(x_0)}u G_\nu ds &\geq & -\ds\int_{ B_{r/2}(x_0)}(G
\Delta u)dx\\
  &\geq& C_2 r, \; \text{for some universal constant} \; C_2>0.
\end{array}
$$

$G_\nu \sim \ds\frac{1}{r}$ on $\partial B_r(x_0)$ so we obtain by the assumption
$$
\begin{array}{ccl}
 C_3 \delta r &\geq & \ds\int_{\partial B_{r}(x_0)}u G_\nu ds\geq  C_2 r.
\end{array}
$$
Contradiction; we can choose $\delta>0$ small enough so that the
above inequality fails. Hence, we get the result.
\end{proof}

\section{Locally, $\Omega(u)$
has a Single Component with
a Positively Dense Complement}\label{ChapLocalResults}
 Let us consider a point $x_0 \in \partial\Omega(u)$ and a neighborhood of $x_0$, $B_r(x_0),$ such that all the components of $\Omega(u)$ reach up to at least $B_{r/2}(x_0)$ and all the components of $\{x|u(x)=0\}$ reach $0$ with a connected subset of $\partial\Omega(u)$. Then, we normalize this neighborhood to $B_1(0)$ by taking $\tilde{u}(x)= \ds \frac{u(x_0-rx)}{r}$ for $x\in B_1(0)$. For the sake of simplicity, we denote the normalized function $\tilde{u}(x)$ as $u(x)$. We will show that $\Omega(u)$
has a single component with
a positively dense complement in $B_1(0)$ with the following steps:
\begin{enumerate}
 \item Let $\Upsilon$ be any connected component of $\{x|u(x)=0\}$, then $\Upsilon$ has some nice geometric properties.
  \item Let $\Gamma$ be any connected component of $\Omega(u)$ in $B_1(0)$, then $u$ has a nontrivial linear growth in $\Gamma$, i.e. $u$ has Non-Degeneracy component by component.
  \item $\Omega(u)$ has at most two connected
components.
  \item There is only one component with a positively dense complement.
\end{enumerate}
\subsection{On Some Properties of the Open Components of Zero-Level Set}
 Main results of this section are the following:
  \begin{itemize}
   \item There is no open component of $\{x|
u(x)=0\}^o$ which is strictly contained in $B_1(0)$
    \item Let $\Upsilon$ be any open component of $\{x|
u(x)=0\}^o$ with $r_0= d(\Upsilon,0)$, then the contribution of the mass of $\Delta u$ in $\Upsilon$ $\sim (1-r_0)$ in $B_1(0)$,
    \item Let $\Upsilon$ be connected to $0$ with the connected subset, $\mathcal{C}_\Upsilon$, of $\partial \Omega(u)$, then $\partial\Upsilon \cap \partial B_1(0)\neq \emptyset$ with one of the following two conclusions: either
\begin{enumerate}
  \item for any $\eta, \ep>0$ and $x \in \mathcal{C}_\Upsilon\cap B_{1-\eta}(0)$ we have $B_\ep(x)\cap \{x| u(x)=0\}^o \neq \emptyset$
  and the set $$\ds \bigcup_{x\in\mathcal{C}_\Upsilon\cap B_{1-\eta}(0)}\{O |\; O \;\text{is a component of }\;\{x|u(x)=0\}^o \;\text{such that}\; B_\ep(x)\cap O \neq \emptyset \}$$ has finitely many elements, or
  \item $0\in \partial\Upsilon$.
\end{enumerate}
  \end{itemize}
  Next two lemmas provide us some lower bound estimates on the rate of growth of some harmonic functions:

   \begin{lem}\label{unbddgrad}Let $h(x)=\langle x, e_2 \rangle_+= x_2^+$ in $ \mathbb{R}^2$ and $w$ be a harmonic function such that $w \geq h$ in $B_1(0)$ and $w=0$ on $\partial B_1(0) \cap \{x| \langle x,e_2\rangle <0\}$. Then, for $\nu$ is the inner normal vector to $\Omega(w)$ at $x_0\in \partial\Omega(w)
\cap \partial B_1(0) \cap \{x| \delta < \langle x,e_2\rangle <0\}$ we have  $ w_\nu(x_0) \geq \ln(\delta) $ for some $C>0$.
\end{lem}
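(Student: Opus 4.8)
The plan is to reduce at once to the Poisson extension of the boundary data and then to read off the logarithmic blow‑up from a dyadic decomposition at the scale $\delta$. Since $w$ is harmonic (hence superharmonic) in $B_1(0)$ and $w\ge h=x_2^+$ there, its boundary trace on $\partial B_1(0)$ satisfies $w\ge g$, where $g:=\max(x_2,0)\big|_{\partial B_1(0)}$ (with equality, $g=0$, on the lower semicircle, where $w=0$). Let $v$ be the harmonic function in $B_1(0)$ with boundary values $g$. The maximum principle gives $w\ge v$ in $B_1(0)$, and since both vanish at $x_0$ (which lies on the lower semicircle), comparing one‑sided normal derivatives gives $w_\nu(x_0)\ge v_\nu(x_0)$. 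So it suffices to bound $v_\nu(x_0)$ from below. Writing $x_0=(\cos\phi_0,\sin\phi_0)$, the hypothesis $\langle x_0,e_2\rangle\in(\delta,0)$ forces $|\phi_0|\lesssim|\delta|$, i.e. $x_0$ is within $O(|\delta|)$ of one of the two points $(\pm 1,0)$; by symmetry we may assume it is near $(1,0)$.

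The mechanism is that the data $g$ has a ``corner'' at $(1,0)$: in arclength it behaves like $s^{+}$, so its harmonic extension has a gradient that diverges logarithmically as one approaches the corner along the zero side. To make this rigorous I would build a subbarrier out of dyadic pieces of the data. For $1\le k\le N$ with $N\asymp\log_2(1/|\delta|)$, let $A_k$ be the boundary arc near $(1,0)$ at angular scale $\sim 2^{k}|\delta|$, i.e. $A_k=\{(\cos\theta,\sin\theta):2^{k}|\delta|\le\theta\le 2^{k+1}|\delta|\}$, so that $g\ge c\,2^{k}|\delta|$ on $A_k$ for a universal $c>0$. Since the $A_k$ are disjoint, $g\ge c\sum_{k=1}^{N}2^{k}|\delta|\,\mathbf 1_{A_k}$ on $\partial B_1(0)$, and hence, letting $\omega_k$ denote the harmonic function in $B_1(0)$ with boundary data $\mathbf 1_{A_k}$,
\[
v\ \ge\ c\sum_{k=1}^{N}2^{k}|\delta|\,\omega_k\ =:\ V_N\qquad\text{in }B_1(0).
\]
Both $v$ and $V_N$ vanish on a full neighborhood of $x_0$ in $\partial B_1(0)$, so they are smooth up to the boundary there and $v_\nu(x_0)\ge (V_N)_\nu(x_0)=c\sum_k 2^{k}|\delta|\,(\omega_k)_\nu(x_0)$. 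An elementary Poisson‑kernel estimate gives $(\omega_k)_\nu(x_0)\asymp |A_k|/\operatorname{dist}(x_0,A_k)^2\asymp 2^{k}|\delta|/(2^{k}|\delta|)^2=1/(2^{k}|\delta|)$, so each term of the sum is bounded below by a universal constant, and summing the $N\asymp\log(1/|\delta|)$ terms yields $v_\nu(x_0)\ge C\log(1/|\delta|)$. Therefore $w_\nu(x_0)\ge C\log(1/|\delta|)$, which is exactly the asserted estimate (the normal derivative is unbounded as the trace point approaches the axis).

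The main obstacle is the bookkeeping at the corner. Because $x_0$ itself lies within $O(|\delta|)$ of $(1,0)$, the innermost arcs $A_k$ are at a scale comparable to the distance from $x_0$ to the corner, so one must verify carefully that $\operatorname{dist}(x_0,A_k)\asymp 2^{k}|\delta|$ \emph{uniformly} in $k$ (this is where the factor of $2$ between consecutive scales, together with $|\phi_0|\le 2|\delta|$, is used) and that the number of usable scales is genuinely $\asymp\log(1/|\delta|)$ rather than $O(1)$. One also has to confirm the regularity used for the inequality $w_\nu(x_0)\ge v_\nu(x_0)$ and for differentiating the finite sum $V_N$ term by term, which rests on the fact that $g\equiv 0$ on an entire neighborhood of $x_0$ in $\partial B_1(0)$ (Schwarz reflection makes $v$, $V_N$, and $w$ harmonic across that arc), not merely on $g(x_0)=0$. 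Everything else — the maximum‑principle comparison and the Poisson‑kernel bounds for indicators of arcs — is routine.
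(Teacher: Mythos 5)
Your proposal is correct and uses the same underlying mechanism as the paper: bound $w$ from below by the harmonic extension of the boundary data $g=y_2^+\big|_{\partial B_1}$, pass to normal derivatives at the common zero $x_0$, and read the logarithmic blow-up off the Poisson kernel. The only real difference is in how the final integral is estimated. The paper writes $w_\nu(x_0)\geq C\int_{\partial B_1\cap\{y_2>0\}}\frac{y_2}{|x_0-y|^2}\,dS(y)$ and then evaluates a restricted version of this integral directly, arriving at a closed-form logarithm; you instead decompose the data into dyadic indicator pieces $\mathbf 1_{A_k}$ on arcs of angular length $\sim 2^{k}|\delta|$ and show each piece contributes $\asymp 1$ to the normal derivative, summing over $N\asymp\log(1/|\delta|)$ scales. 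These are two ways to do the same computation. Your dyadic bookkeeping is arguably cleaner and sidesteps a couple of wrinkles in the paper's version: the statement and proof there have sign/typo issues (the restriction to $\{|x_0-y|<2\delta\}$ and the conclusion ``$\geq\ln(\delta)$'' are stated with $\delta$ negative or in $(0,1)$, where they read awkwardly), whereas your formulation $w_\nu(x_0)\gtrsim\log(1/|\delta|)$ makes both the sign and the asymptotics unambiguous. Your explicit attention to the uniformity of $\operatorname{dist}(x_0,A_k)\asymp 2^{k}|\delta|$ and to the Schwarz-reflection regularity needed to compare one-sided normal derivatives is also a useful tightening that the paper leaves implicit.
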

\begin{proof} We can write down the Poisson formula for $w$ and estimate
$w_\nu(x_0)$ by the first order incremental quotient, so we have
 $x \in B_1(0)$,

$$w(x)=\ds\frac{1-|x|^2}{2} \int_{\partial
B_1(0)}\frac{w(y)}{|x-y|^2}dS(y).$$
Let us denote $\langle x_0+s\nu,e_1\rangle = a$ and $\langle x_0+s\nu,e_2\rangle = b \leq 0$, then we have
$$\begin{array}{lcl}
   \ds  \frac{w(x_0+ s\nu)-w(x_0)}{s}& = & \ds\frac{1-|x_0+s \nu|^2}{2 s}
\int_{\partial B_1(0)}\frac{w(y)}{|x_0+s\nu-y|^2}dS(y)\\
     & \geq & \ds\frac{[1-|x_0+s\nu|^2]}{2 s} \int_{\partial
B_1(0)}\frac{h(y)}{|x_0+s\nu-y|^2}dS(y)
\\
     & =& \ds\frac{[1-|x_0+s\nu|^2]}{2 s} \int_{\partial
B_1(0)\cap \{y_2>0\}}\frac{y_2}{|x_0+s\nu-y|^2}dS(y)
   \end{array}
$$
Let us take the limit as $s \to 0$ on both sides, then
for $x_0\in \partial\Omega(w)
\cap \partial B_1(0) \cap \{x| \delta < \langle x,e_2\rangle <0\}$ we have
$$\begin{array}{lcl}
  w_\nu(x_0) &\geq &\ds C\int_{\partial
B_1(0)\cap \{y_2>0\}}\frac{y_2}{|x_0-y|^2}dS(y) \\ &\geq& \ds C\int_{\partial
B_1(0)\cap \{y_2>0\}\cap \{|x_0-y| \rangle< 2\delta \}}\frac{y_2}{|x_0-y|^2}dS(y)
\\ &\geq& \ds C\int_{\partial
B_1(0)\cap \{y_2>0\}\cap \{|x_0-y|< 2\delta \}}\frac{y_2}{2-2\delta^2 -2y_1x_1}dS(y)
\\ &=&C \ln[\ds\frac{ 2-2\delta^2-2\sqrt{1-\delta^2}x_1}{2-2\delta^2-2x_1}] \geq C \ln(\delta),
\end{array}$$
since $x_1 \geq \sqrt{1-\delta^2}$ where $x_1=\langle x_0,e_1\rangle$.
Hence, we get the result.
\end{proof}
\begin{lem}\label{biggradient for2planes}Let $h(x)=(\langle x, \pm e_2 \rangle \pm \delta)_+$ in $ \mathbb{R}^2$ and $w$ be a harmonic function such that $w \geq h$ in $B_1(0)$. Then, for $\nu$ is the inner normal vector to $\Omega(w)$ at $x_0\in \partial\Omega(w)$ we have $ w_\nu(x_0) \geq \frac{C}{\delta}$ for some $C>0$.
\end{lem}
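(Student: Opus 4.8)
The plan is to prove this as a Hopf--type boundary point estimate: compare $w$ from below with an explicit harmonic barrier adapted to the two affine pinches in the hypothesis, and show that the barrier already has normal derivative $\gtrsim \delta^{-1}$ at $x_0$. It is the two--plane analogue of Lemma \ref{unbddgrad}, the essential difference being that the free boundary point is now squeezed between \emph{two} positivity lobes rather than grazed by one, which is what should upgrade the merely logarithmic lower bound of Lemma \ref{unbddgrad} to a genuine power $\delta^{-1}$.

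First I would normalize the picture. After a translation and rotation we may take $x_0=0$ and the two affine pieces to be $\langle y,e_2\rangle-\delta$ and $\langle y,-e_2\rangle-\delta$, so that $w\ge h$ together with $w(0)=0$ forces $\{w=0\}$ to lie, near $0$, inside the thin strip $\Sigma_\delta=\{|\langle y,e_2\rangle|<\delta\}$, while $w(y)\ge |\langle y,e_2\rangle|-\delta$ off $\Sigma_\delta$; in particular $w>0$ on the half--discs $B_{1/2}(0)\cap\{\pm\langle y,e_2\rangle>\delta\}$, where $w\ge c\,\mathrm{dist}(\cdot,\partial\Sigma_\delta)$. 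Combining this two--sided lower bound with the local structure assumed at the start of Section \ref{ChapLocalResults} (each component of $\Omega(u)$ reaching $B_{1/2}(0)$, each component of $\{u=0\}$ reaching $0$ along a connected arc of $\partial\Omega(u)$), I would then produce a ball $B_\rho(z)\subseteq\Omega(w)$ of radius $\rho$ comparable to $\delta$ with $0\in\partial B_\rho(z)$; that is, the free boundary point $0$ is genuinely wedged between the two positivity lobes into an $O(\delta)$--wide channel of $\Omega(w)$.

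Next I would run the comparison and the computation. Let $\psi$ be harmonic in $B_\rho(z)$ with $\psi=\min\{w,1\}$ on the arc of $\partial B_\rho(z)$ lying outside $\Sigma_\delta$ and $\psi=0$ on the remainder of $\partial B_\rho(z)$; by the maximum principle $w\ge\psi$ on $B_\rho(z)$, so $w_\nu(0)\ge\psi_\nu(0)$. Writing the Poisson formula for $\psi$ in $B_\rho(z)$ and estimating $\psi_\nu(0)$ by the first--order incremental quotient exactly as in the proof of Lemma \ref{unbddgrad}, one integrates the boundary data of $\psi$ — which is bounded below on the relevant arc, and on an arc reaching within distance $O(\delta)$ of $0$ from both sides of $\Sigma_\delta$ — against a Poisson kernel of size $\sim\rho^{-2}$ on a set of size $\sim\rho$, and this is what yields $\psi_\nu(0)\ge C\delta^{-1}$, hence $w_\nu(0)\ge C\delta^{-1}$. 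As in Lemma \ref{unbddgrad}, the normal derivative and the inner normal $\nu$ are understood in the incremental quotient / tangent ball sense, since $\partial\Omega(w)$ need not be smooth.

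The step I expect to be the main obstacle is exactly the quantitative one: producing the $O(\delta)$--radius ball $B_\rho(z)\subseteq\Omega(w)$ touching $x_0$ — equivalently, verifying that the free boundary point is pinched between the two lobes \emph{at scale $\delta$}, not merely approached on one side (a single affine piece yields only the logarithm of Lemma \ref{unbddgrad}). This is where the full strength of the two--sided hypothesis must be used, together with the structural reductions of Section \ref{ChapLocalResults}; once $B_\rho(z)$ is in hand, the barrier comparison and the Poisson--kernel estimate are routine and parallel the proof of Lemma \ref{unbddgrad}.
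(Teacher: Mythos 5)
Your proposal misses the structural observation that makes the paper's proof short, and as a result heads down a harder road with a gap you yourself flag. In the paper, $w$ is read as harmonic throughout $B_1(0)$ (this is what licenses ``write down the Poisson formula for $w$'' on $B_1(0)$), and together with $w\ge h\ge 0$, the strong maximum principle then gives $\{w=0\}\cap B_1(0)=\emptyset$, so in fact $\{w=0\}\subseteq\partial B_1(0)$ — the first line of the paper's proof. Consequently $x_0\in\partial B_1(0)$ and $B_1(0)$ itself is the tangent ball from inside $\Omega(w)$; there is nothing to ``produce.'' The paper then estimates exactly as in Lemma \ref{unbddgrad}: Poisson kernel of $B_1(0)$, bound $w\ge h$ on the boundary, restrict the integral to the arc $\{y\in\partial B_1(0):|x_0-y|<\delta\}$ where the kernel is $\ge \delta^{-2}$, and conclude $w_\nu(x_0)\ge C/\delta$.

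Your proposal, by contrast, treats $w$ as harmonic only in $\Omega(w)$, so that $\{w=0\}$ could be a fat set inside the strip $\Sigma_\delta$, and then tries to manufacture an interior tangent ball $B_\rho(z)\subseteq\Omega(w)$ at $x_0$ of radius $\rho\sim\delta$ before running Poisson on that small ball. This is a genuinely different (and different--strength) hypothesis and it creates the very obstacle you describe: the existence of such a $\delta$-scale tangent ball does not follow from $w\ge h$ alone, and the ``structural reductions of Section \ref{ChapLocalResults}'' you invoke are not available here (this lemma is a stand-alone Poisson-kernel estimate for a generic harmonic barrier, proved before any of that machinery and in fact used to establish it, e.g.\ in Corollary \ref{onecomponent}). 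Moreover, even granting the tangent ball $B_\rho(z)$ with $\rho\sim\delta$, the Hopf estimate in that ball scales like $w(z)/\rho$, and with only the linear lower bound $w\ge h$ you have $w(z)\lesssim\delta$, so you would not recover $\delta^{-1}$ by that route. The missing idea is precisely that $w$ is globally harmonic in $B_1(0)$, which collapses the geometry: there is one ball, it is $B_1(0)$, and the Poisson computation on it is the whole proof.
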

\begin{proof} Notice that $\{x|w(x)=0\} \subseteq \partial B_1(0)$ and we can write down the Poisson formula for $w$ and estimate
$w_\nu(x_0)$ by the first order incremental quotient, as we did in the proof of Lemma \ref{unbddgrad}, so we have

$$\begin{array}{ccc}
   \ds  \frac{w(x_0+h\nu)-w(x_0)}{h}& = & \ds\frac{1-|x_0+h\nu|^2}{2\pi h}
\int_{\partial B_1(0)}\frac{w(y)}{|x_0+h\nu-y|^2}dS(y)\\
     & \geq & \ds\frac{[1-|x_0+h\nu|^2]}{2\pi h} \int_{\partial
B_1(0)}\frac{h(y)}{|x_0+h\nu-y|^2}dS(y)\\
     &\geq &
 \ds\frac{[1-|x_0+h\nu|^2]}{2\pi h} \int_{\partial
B_1(0)\cap \{|x_0-y|< \delta\}}\frac{h(y)}{\delta^2}dS(y)\end{array}
$$
Let us take the limit as $h \to 0$ on both sides, we get
$$w_\nu(x_0) \geq C  \int_{\partial
B_1(0)\cap \{|x_0-y|< \delta\}}\frac{1}{\delta^2}dS(y)\geq \frac{C}{\delta}.$$

\end{proof}

\begin{lem}\label{nointerior} There is no open component of $\{x|
u(x)=0\}^o$ which is strictly contained in $B_1(0)$.
\end{lem}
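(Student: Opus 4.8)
The plan is to argue by contradiction: assuming $O$ is an open connected component of $\{x\mid u(x)=0\}^o$ with $\overline O\subset B_1(0)$, I will ``fill in'' $O$ to produce a subsolution of \eqref{eq:FB} that strictly exceeds $u$ on $O$, contradicting the fact that $u$ is the largest subsolution. I would first record the elementary topological fact that $\partial O\subseteq\partial\Omega(u)$: if some $x\in\partial O$ had a whole neighborhood lying inside $\{u=0\}$, that neighborhood together with $O$ would be a connected subset of $\{u=0\}^o$, forcing $x\in O$. Thus $u\equiv 0$ on $\partial O$, while $u$ is harmonic and positive at points of $\Omega(u)$ arbitrarily close to every point of $\partial O$.

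Next I would look for a bounded open set $V$ with $\overline O\subseteq V$, $\overline V\cap\overline D=\emptyset$, and — the key requirement — $\partial V\subseteq\Omega(u)$ (equivalently $\partial V\cap\{u=0\}=\emptyset$). Granting such a $V$, define $w$ to equal $u$ off $V$ and to be the harmonic function in $V$ with boundary data $u|_{\partial V}$. Then: $w\ge u$ everywhere, since $u$ is subharmonic in $\mathbb R^2\setminus\overline D$; $w>0$ on $\overline V$ by the minimum principle, because $u>0$ on $\partial V\subseteq\Omega(u)$, so in particular $w>0=u$ throughout $O$; $w$ is subharmonic on $\mathbb R^2\setminus\overline D$ (in $V$ it is harmonic, off $\overline V$ it equals $u$, and the sub-mean-value property at points of $\partial V$ follows from $w\ge u$ and the harmonicity of $u$ near $\partial V\subset\Omega(u)$), hence $\Delta w\ge 0$ in $\Omega(w)\setminus D$; and $w=u\le g$ on $\partial D$. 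For the free boundary condition one checks $\Omega(w)=\Omega(u)\cup\overline V$ and $\partial\Omega(w)=\partial\Omega(u)\setminus\overline V$, so every $y_0\in\partial\Omega(w)$ lies at positive distance from $\overline V$; there $w\equiv u$ and $\Omega(w)$ agrees with $\Omega(u)$, so a small enough outside tangent ball to $\Omega(u)$ at $y_0$ is also an outside tangent ball to $\Omega(w)$, and the inequality $\alpha\ge\sqrt{f(y_0)}$ valid for $u$ passes verbatim to $w$. Hence $w$ is a subsolution strictly above $u$ on $O$, the desired contradiction.

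The main obstacle is the construction of $V$, that is, separating $\overline O$ from the rest of the zero set by a curve lying in $\Omega(u)$. Fix a thin tube $N_\rho=\{\operatorname{dist}(\cdot,\overline O)<\rho\}$ with $\overline{N_\rho}\cap\overline D=\emptyset$. Such a $V$ exists as soon as the connected component of the compact set $\{u=0\}\cap\overline{N_\rho}$ containing $\overline O$ fails to meet $\partial N_\rho$: then a standard compactness argument (a connected component of a compact metric space can be trapped inside any open neighborhood of it by a relatively clopen subset) yields a positive gap between $\overline O$ and the remaining zero set inside the tube, and one takes $V$ to be a slightly enlarged neighborhood of that clopen set, with smooth boundary. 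The delicate point is to exclude the alternative, in which the zero set ``sprouts'' from $\overline O$ on all scales — a slit-type tendril along which $\Omega(u)$ is pinched to arbitrarily small width $\sim\delta$, or a slit ending at an interior tip. For this I would play the Lipschitz bound, Theorem \ref{LipscThm}, against the Poisson-kernel gradient estimates of Lemmas \ref{unbddgrad} and \ref{biggradient for2planes}: a neck of $\Omega(u)$ of width $\sim\delta$ squeezed between two zero regions, after rescaling, pins $u$ below two translates of a half-plane profile at distance $\sim\delta$, which by Lemma \ref{biggradient for2planes} forces $|\nabla u|\gtrsim 1/\delta$ at the relevant free boundary point, and letting $\delta\to0$ contradicts Theorem \ref{LipscThm}; an interior slit tip produces an $r^{-1/2}$ blow-up of $|\nabla u|$ and is excluded the same way. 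Once this pathological configuration is ruled out, a good tube radius $\rho$ exists, the set $V$ can be built, and the fill-in argument above closes the proof.
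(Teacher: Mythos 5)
Your strategy is genuinely different from the paper's, and it is worth saying how. You try to \emph{isolate} the component $\overline{O}$ from the rest of the zero set by an open set $V\supseteq\overline{O}$ with $\partial V\subseteq\Omega(u)$, replace $u$ by its harmonic extension $w$ in $V$, and observe that since $\partial\Omega(\max\{u,w\})\subseteq\partial\Omega(u)\setminus\overline V$ the free boundary never moves, so the FBC passes verbatim. This is a clean closing argument. The paper does not attempt any such separation: it \emph{connects} $O$ to $\Omega(u)$ by adjoining a ball $B_r(x_0)$ around a tangency point $x_0\in\partial O$ in which $u$ already grows linearly (Theorem~\ref{linearGrowth}), forms $\Sigma=B_r(x_0)\cup O$, takes the harmonic replacement in $\Sigma$, and then controls the \emph{new} free-boundary points — which appear only near the two corners $\partial B_r(x_0)\cap\partial O$ — via the corner-gradient estimate of Lemma~\ref{unbddgrad}. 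The paper's modification is a local surgery near one tangent point and never needs to know the global structure of the zero set.

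The gap in your proposal is exactly the step you flag as the ``main obstacle'': the existence of $V$. Granting $V$, the rest of your argument (positivity of $w$ on $\overline V$, subharmonicity across $\partial V$, preservation of the FBC) is correct. But the exclusion of the bad alternative does not go through as written. What must be shown is that for some $\rho>0$ the connected component of $\{u=0\}\cap\overline{N_\rho}$ containing $\overline O$ misses $\partial N_\rho$; if it always reaches $\partial N_\rho$, no separating loop inside $\Omega(u)$ exists. You characterize the bad case as a ``neck'' or ``slit'' and appeal to Lemma~\ref{biggradient for2planes}, but that lemma's hypothesis is $w\ge h$, a \emph{lower} bound by the two half-plane barriers; your own phrasing (``pins $u$ \emph{below} two translates of a half-plane profile'') gives only an upper bound on $u$ in the neck, which yields no lower bound on $|\nabla u|$ and hence no conflict with Theorem~\ref{LipscThm}. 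The $r^{-1/2}$ blow-up at an ``interior slit tip'' is likewise the behavior of a harmonic function vanishing across a genuine zero-measure cut; if the tendril has any positive thickness it is an open piece of $\{u=0\}^o$ (possibly one of infinitely many accumulating on $\partial O$) and the singularity does not form. You also have not argued that necks and slit tips exhaust the ways the component can reach $\partial N_\rho$. Until the existence of $V$ is established by a different mechanism, the fill-in step has nothing to fill in, and the paper's window-through-a-tangent-ball construction is the more robust route.
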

\begin{proof}
(By way of contradiction) Assume there is an open component of $\{x| u(x)=0\}^o$, say $\Gamma$ which is
strictly contained in $B_1(0)$, then there is a tangent ball
$B_r(y)\subseteq\{x| u(x)=0\}^o$ to $\Omega(u)$, let $x_0\in
\partial\Omega(u)\cap \partial B_r(y)$. Then, by Lemma \ref{linearGrowth}, $x_0$
has a ball $B_r(x_0)$ such that $u$ has a linear growth in
$B_r(x_0)$. Consider the domain $\Sigma= B_r(x_0) \cup \Gamma$, shown in Figure \ref{ResmNonempty}, and
$h(x)$ be the harmonic function such that
\begin{equation*}
\begin{cases}
 \triangle h =0 &\text{ in}\; \Sigma, \\
h= u &\text{ in } \;B_1(0)\setminus \Sigma.\\
\end{cases}
\end{equation*}
\begin{figure}
  \includegraphics[width=2.5in]{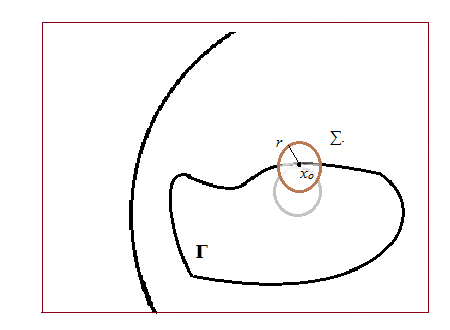}\\
  \caption{$\Sigma= B_r(x_0) \cup \Gamma$}\label{ResmNonempty}
\end{figure}

Then, $h \geq u$ in $\overline{\Sigma}$. We claim that $v=\max\{u,h\}$ is a larger subsolution than $u$, we know that $h$ is a harmonic function with $h \neq u$, so $v$ is a subharmonic function in $\Omega(v)$ so it is enough to show that $v$ satisfies the FBC, $(iii)$ in Definition \ref{Def1}: Let $y_0 \in \partial\Omega(v)$ such that $y_0$ has a tangent ball from outside of $\Omega(v)$ and $\eta$ be the inner normal vector into $\Omega(v)$, then $y_0 \in \partial  \Sigma$ by the Maximum principle. Moreover, the only possible region for $y_0$ is either in a neighborhood of the intersection points $\partial B_r(x_0) \cap \partial \Gamma $ (since outside of these neighborhoods zero-level set of $h(x)$ can be only a curve) or $y_0 \in \partial\Omega(u)$ such that $y_0$ has a tangent ball from outside of $\Omega(u)$.\par In the first case, we estimate $v_\eta(y_0)$ by Lemma \ref{unbddgrad}. $v$ is a harmonic function which is bigger than $l(x)=\alpha\langle x-x_0, \nu \rangle_+$ in $B_r(x_0)$ for some direction $\nu \in S^1$ and $\alpha>0$ since $u$ grows linearly in $B_r(x_0)$.
Then, we have
$$v_\eta(y_0) \geq C \ln(|y_0-x_0|)>0, $$
Hence, $h_\eta(y_0) \geq C \ln(r) $. Therefore, by choosing $r$ small enough we guarantee that $v$
is a subsolution of \eqref{eq:FB}. \par In the second case, if we have $v(x)\leq \alpha\langle x-y_0, \nu \rangle_+ + o(|x-y_0|)$ for some $\al>0$, then we have $\al \geq \sqrt{f(y_0)}$ since $y_0 \in \partial\Omega(u)$  with a tangent ball from outside of $\Omega(u)$, $$u(x)\leq v(x) \leq \alpha\langle x-y_0, \nu \rangle_+ + o(|x-y_0|),$$
and $u$ satisfies the FBC, $(iii)$ in Definition \ref{Def1}, i.e. $\al \geq \sqrt{f(y_0)}$. Hence, $v$ is a  subsolution of \eqref{eq:FB}.
\par Thus, we construct a larger subsolution than $u$, contradiction. Hence, the result follows.

\end{proof}

Lemma \ref{nointerior} is trivially true for $\Omega(u)$ by the Maximum principle. Since, the harmonic function which is  zero on the boundary is the identically zero function.

\begin{lem}\label{LaplaceinZeroSet}
   Let $\Upsilon$ be any open component of $\{x|
u(x)=0\}^o$ with $r_0= d(\Upsilon,0)$, then the contribution of the mass of $\Delta u$ in $\Upsilon$ $\sim (1-r_0)$ in $B_1(0)$
\end{lem}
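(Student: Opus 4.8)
Since $u$ is harmonic in $\Omega(u)$, the positive measure $\Delta u$ is carried by $\partial\Omega(u)$, and "the contribution of the mass of $\Delta u$ in $\Upsilon$" is the mass $(\Delta u)(\mathcal N)$ on a neighborhood $\mathcal N$ of the arc $\mathcal C_\Upsilon:=\partial\Upsilon\cap\partial\Omega(u)$ (the part of the free boundary that bounds $\Upsilon$). I will use three elementary facts. First, since $u\equiv 0$ on $\overline\Upsilon$ and any connected set avoiding $\partial\Omega(u)$ lies entirely in $\{u>0\}$ or entirely in $\{u=0\}^o$, every ball centered at a point of $\Upsilon$ with radius smaller than its distance to $\partial\Omega(u)$ is contained in $\Upsilon$, and such a ball (taken maximal) is tangent to $\partial\Omega(u)$ at a point of $\mathcal C_\Upsilon$. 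Second, by Lemma~\ref{nointerior}, $\overline\Upsilon$ meets $\partial B_1(0)$. Third, since $d(\Upsilon,0)=r_0$ and $\overline\Upsilon\subseteq\{r_0\le|x|\le 1\}$ is connected, the radii of points of $\Upsilon$ fill the whole interval $(r_0,1)$; in particular for each $\rho\in(r_0,1)$ there is $\zeta_\rho\in\Upsilon$ with $|\zeta_\rho|=\rho$.

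\textbf{Lower bound.} I mimic the covering argument in the proof of Lemma~\ref{LemLapRinB}, with the basepoint $0\in\partial\Omega(u)$ and the interval $(0,1)$ replaced by the family $\{\zeta_\rho\}_{\rho\in(r_0,1)}\subseteq\Upsilon$ (so connectedness of $\Upsilon$ plays the role of Lemma~\ref{lemnonemptynter}) and by the interval $(r_0,1)$. For each $\rho$ set $\epsilon_\rho=d(\zeta_\rho,\partial\Omega(u))$, truncated so the resulting ball stays inside $B_1(0)$; then $B_{\epsilon_\rho}(\zeta_\rho)\subseteq\Upsilon$ is tangent to $\partial\Omega(u)$ at some $y_\rho\in\mathcal C_\Upsilon$, and by Theorem~\ref{linearGrowth} $u$ grows linearly in $B_{\epsilon_\rho}(y_\rho)$. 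Exactly as in Lemma~\ref{LemLapRinB} — harmonic replacement $w$ of $u$ in $B_{\epsilon_\rho}(y_\rho)$, the divergence identity $\int_{\partial B}(u-w)(u-w)_\nu=\int_B\bigl(\Delta(u-w)(u-w)+|\nabla(u-w)|^2\bigr)$, and a Poincaré estimate on a sub-ball $B_{\epsilon_\rho/2}(z_\rho)\subseteq\Upsilon$ where $u\equiv 0$ and $w$ grows linearly — one gets $\int_{B_{\epsilon_\rho}(y_\rho)}\Delta u\,dx\ge c\,\epsilon_\rho$. Projecting the balls radially onto one ray, the segments $[|y_\rho|-\epsilon_\rho,|y_\rho|+\epsilon_\rho]$ cover $(r_0,1)$ up to bounded overlap, so a Vitali selection yields a disjoint subfamily $\{S_{\rho_j}\}$ with $\bigcup 2S_{\rho_j}\supseteq(r_0,1)$, hence $\sum_j\epsilon_{\rho_j}\ge\tfrac12(1-r_0)$. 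Disjointness of the radial projections forces the original balls $B_{\epsilon_{\rho_j}}(y_{\rho_j})$ to be pairwise disjoint (since $|y-y'|\ge\bigl||y|-|y'|\bigr|$), so summing gives
$$(\Delta u)(\mathcal N)\ \ge\ \sum_j\int_{B_{\epsilon_{\rho_j}}(y_{\rho_j})}\Delta u\,dx\ \ge\ c\sum_j\epsilon_{\rho_j}\ \ge\ \tfrac{c}{2}(1-r_0).$$

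\textbf{Upper bound.} Because $u$ is Lipschitz, the Divergence Theorem gives $(\Delta u)(B_s(y))=\int_{\partial B_s(y)}u_\nu\,d\sigma\le C s$ for every ball. Hence it suffices to cover $\mathcal C_\Upsilon$ (away from $\partial B_1(0)$) by a boundedly overlapping family of balls $B_{s_k}(z_k)$, $z_k\in\mathcal C_\Upsilon$, with $\sum_k s_k\le C(1-r_0)$, which then yields $(\Delta u)(\mathcal N)\le\sum_k(\Delta u)(B_{s_k}(z_k))\le C\sum_k s_k\le C(1-r_0)$. Equivalently — and this is the route I would actually take, parallel to Theorem~\ref{NonDegrteinB} — pick $p_\Upsilon\in\Upsilon$ with $|p_\Upsilon|$ close to $r_0$, write Green's representation for $u$ on $B_\sigma(p_\Upsilon)$ with $\sigma\simeq(1-r_0)$ chosen so that $\overline\Upsilon\subseteq B_{\sigma/2}(p_\Upsilon)$, and use $u(p_\Upsilon)=0$, $u\le C\sigma$ on $\partial B_\sigma(p_\Upsilon)$, $G_\nu\sim\sigma^{-1}$ and $G\le -c$ on $B_{\sigma/2}(p_\Upsilon)$ to obtain $(\Delta u)(\overline\Upsilon)\le(\Delta u)(B_{\sigma/2}(p_\Upsilon))\le C\sigma\le C(1-r_0)$.

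\textbf{The main obstacle.} Both forms of the upper bound rest on the same geometric input: that $\overline\Upsilon$ fits inside a ball of radius $O(1-r_0)$, equivalently that $\mathcal C_\Upsilon$ has length $O(1-r_0)$ — i.e.\ that the component $\Upsilon$ is not a long, thin, or wrapping finger inside the annulus $\{r_0\le|x|\le 1\}$. The crude annular estimate $(\Delta u)(B_1(0)\setminus B_{r_0}(0))=\int_{\partial B_1}u_\nu-\int_{\partial B_{r_0}}u_\nu\le C$ is useless here, as it does not see the width $1-r_0$. The control I expect to need is two-dimensional and comes from the gradient-blow-up Lemmas~\ref{unbddgrad}–\ref{biggradient for2planes}: a thin neck of $\Upsilon$ would place two nearly parallel free-boundary arcs of $\Omega(u)$ at mutual distance $\lesssim(1-r_0)$, and by non-degeneracy (linear growth away from the free boundary on each side) $u$ would dominate a two-plane/tent barrier there, forcing $|\nabla u|\gtrsim(1-r_0)^{-1}$ at a free-boundary point and contradicting Lipschitz regularity once $1-r_0$ is small; converting the resulting uniform lower bound on the width of $\Upsilon$ into the diameter bound for $\overline\Upsilon$, and likewise ruling out components that wind around the origin, is the technical heart of the argument. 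The lower bound, by contrast, is a routine adaptation of Lemma~\ref{LemLapRinB}.
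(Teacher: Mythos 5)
Your lower bound is the paper's argument: use connectedness of $\Upsilon$ to supply, for every radius $\rho\in(r_0,1)$, a maximal ball $B_{\ep_\rho}(\zeta_\rho)\subseteq\Upsilon$ tangent to $\partial\Omega(u)$ at some $y_\rho\in\mathcal{C}_\Upsilon$; invoke Theorem~\ref{linearGrowth} for linear growth, the harmonic-replacement/divergence-theorem estimate of Lemma~\ref{LemLapRinB} to get $\int_{B_{\ep_\rho}(y_\rho)}\Delta u\ge c\,\ep_\rho$; and close with the radial Vitali selection so that the sum of the disjoint radii is $\gtrsim(1-r_0)$. This is exactly what the paper does (the paper's phrasing ``pick a partition of $[r_0,1]$ in terms of $\ep_{y_0}$'' plays the same role as your $\{\zeta_\rho\}$). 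The only cosmetic difference is that you spell out why disjointness of the radial projections gives disjointness of the original balls, which the paper leaves implicit.

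On the upper bound, you and the paper part ways: the paper disposes of it with the single clause ``the upper bound is trivial,'' while you flag precisely the point that makes it non-obvious, namely that a Lipschitz bound plus a covering only gives $\lesssim(1-r_0)$ once you know $\mathcal{H}^1(\partial\Upsilon)=O(1-r_0)$, i.e.\ that $\Upsilon$ does not form a long thin or winding finger inside the annulus $\{r_0\le|x|\le1\}$. That worry is legitimate and is not addressed in the paper; Lemma~\ref{finiteHausdrfMeas.} only gives a universal bound on $\mathcal{H}^1$, not one proportional to $1-r_0$. Two mitigating remarks: (i) the only place Lemma~\ref{LaplaceinZeroSet} is invoked (in the proof of Lemma~\ref{lemmazerointerior}) uses solely the lower bound, so nothing downstream depends on the $\lesssim(1-r_0)$ direction; (ii) your proposed repair via the gradient-blow-up Lemmas~\ref{unbddgrad}--\ref{biggradient for2planes}, ruling out thin necks by Lipschitz regularity, is exactly the mechanism the paper deploys later (e.g.\ in Corollary~\ref{onecomponent}), so it is the natural tool to fill the hole if one insists on the two-sided statement. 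In short: your lower bound matches the paper's proof; your ``main obstacle'' is a gap in the paper, not in your argument, and you correctly identify both its nature and the right tools to close it.
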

\begin{proof}
The upper bound is trivial, for the lower bound we will use the linear growth in a ball which has a tangent ball from the zero level set.
By Lemma \ref{linearGrowth}, for any $r\in (r_0,1)$, there exists $y_0 \in \{x|
u(x)=0\}^o$ and $\ep_{y_0}>0$ such that $B_{\ep_{y_0}}(y_0)$ is tangent to $\Omega(u)$, say at $x_0$
and $u$ has a linear growth in $B_{\ep_{y_0}}(x_0)$.  Pick a ray in $B_1(0)$, say  $R_1$. Now, we use $R_1$ in order to pick a partition of $[r_0,1]$ in terms of $\ep_{y_0}$. Consider $\{B_{\ep_{y_0}}(x_{\ep_{y_0}})| r\in[r_0,1]\}$ and project all these balls
$B_{\ep_{y_0}}(x_{\ep_{y_0}})$ onto $R_1$ . Hence, we obtain a covering of $R_1$ by
segments $S_r=[r-h,r+h]$. Extract a disjoint subfamily $(S_{r_j})$ such that
$2S_{r_j}$ covers $R_1$. Now, resend back this subfamily onto their original places. Next, we know that the total mass of
$\Delta u$ in $B_{\ep_{r_j}}(x_{r_j}) \sim \ep_{r_j}$, by Lemma \ref{LemLapRinB}, so that when we add them up we get a lower bound as $c(1-r_0)$ for some $c>0$.
\end{proof}
Lemma \ref{LaplaceinZeroSet} is a variation of Lemma \ref{LemLapRinB}.
\begin{lem}\label{lemmazerointerior} Let $\Upsilon$ be any connected component of $\{x| u(x)=0\}^o$ in $B_1(0)$ which is connected to $0$ with the connected subset, $\mathcal{C}_\Upsilon$, of $\partial \Omega(u)$. Then $\partial\Upsilon \cap \partial B_1(0)\neq \emptyset$ with one of the following two conclusions: either
\begin{enumerate}
  \item for any $\eta, \ep>0$ and $x \in \mathcal{C}_\Upsilon\cap B_{1-\eta}(0)$ we have $B_\ep(x)\cap \{x| u(x)=0\}^o \neq \emptyset$
  and the set $$\ds \bigcup_{x\in\mathcal{C}_\Upsilon\cap B_{1-\eta}(0)}\{O |\; O \;\text{is a component of }\;\{x|u(x)=0\}^o \;\text{such that}\; B_\ep(x)\cap O \neq \emptyset \}$$ has finitely many elements, or
  \item $0\in \partial\Upsilon$.
\end{enumerate}
See Figure \ref{ResimOPenCom}.
\begin{figure}
  \includegraphics[width=2in]{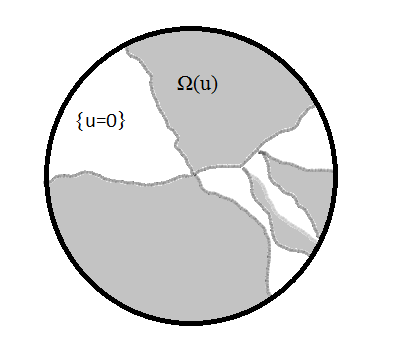}\\
  \caption{Possible configuration of the components in $B_1(0)$}\label{ResimOPenCom}
\end{figure}

\end{lem}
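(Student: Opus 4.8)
The plan is to settle the geometric claim and the non-emptiness clause directly from the lemmas already established, and to reduce the dichotomy to a single mass-counting estimate.

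First I would note that $\partial\Upsilon\cap\partial B_1(0)\neq\emptyset$ is immediate from Lemma \ref{nointerior}: $\Upsilon$ is a component of $\{u=0\}^o$, hence is not strictly contained in $B_1(0)$, so $\overline{\Upsilon}$ must meet $\partial B_1(0)$; since $\Upsilon$ is open and contained in $B_1(0)$, that intersection point lies in $\partial\Upsilon$. Next, I would observe that the non-emptiness clause of $(1)$ holds no matter which of $(1)$, $(2)$ occurs: every $x\in\mathcal{C}_\Upsilon$ lies on $\partial\Omega(u)\subseteq\overline{\Omega(u)}$, so $d(x,\partial\Omega(u))=0$, and Lemma \ref{lemnonemptynter} gives $\partial B_r(x)\cap\{u=0\}^o\neq\emptyset$ for every $0<r<1$; for $x\in B_{1-\eta}(0)$, choosing $r<\min(\ep,\eta)$ produces a point of $\{u=0\}^o$ inside $B_\ep(x)$.

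For the dichotomy itself I would argue by cases on whether $0\in\partial\Upsilon$. If $0\in\partial\Upsilon$, conclusion $(2)$ holds and there is nothing further to prove; so assume $0\notin\partial\Upsilon$, in which case $\mathcal{C}_\Upsilon$ is a genuine connected free-boundary arc joining $\overline{\Upsilon}$ to $0$, and it remains to prove the finiteness assertion in $(1)$. Fix $\eta,\ep>0$ (the regime that matters being $\ep$ small compared to $\eta$), and let $O$ be any component of $\{u=0\}^o$ occurring in the union, so that $O\cap B_\ep(x)\neq\emptyset$ for some $x\in\mathcal{C}_\Upsilon\cap B_{1-\eta}(0)$. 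Then $d(O,0)\le|x|+\ep\le 1-\eta+\ep$, hence $1-d(O,0)\ge\eta-\ep>0$, and by Lemma \ref{LaplaceinZeroSet} the contribution of the mass of $\Delta u$ carried by $O$ is at least $c(1-d(O,0))\ge c(\eta-\ep)$ for a universal $c>0$. Distinct components of $\{u=0\}^o$ are disjoint, so summing these contributions over all such $O$ and comparing with the upper bound on the total mass of $\Delta u$ in $B_1(0)$ from Lemma \ref{LemLapRinB} forces the number of such $O$ to be at most $C/(c(\eta-\ep))$, which is finite; this is conclusion $(1)$.

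I expect the main obstacle to be not the counting but the bookkeeping behind the dichotomy: one has to justify that when $0\notin\partial\Upsilon$ the arc $\mathcal{C}_\Upsilon$ genuinely connects $\overline{\Upsilon}$ to $0$ within the free boundary and that the components detected along $\mathcal{C}_\Upsilon\cap B_{1-\eta}(0)$ stay a definite distance inside $B_1(0)$, so that Lemma \ref{LaplaceinZeroSet} supplies a uniform lower bound for their masses. This rests on combining three ingredients: that $\Upsilon$ reaches $\partial B_1(0)$ (Lemma \ref{nointerior}), which pins down one end of the configuration; the connectedness of $\mathcal{C}_\Upsilon$, which lets one propagate information along it; and, at each point of $\mathcal{C}_\Upsilon$, the tangent-ball and linear-growth structure from Lemmas \ref{lemnonemptynter} and \ref{linearGrowth}. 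One must also keep in mind that $\{u=0\}^o$ may a priori have infinitely many components accumulating on $\partial B_1(0)$; the mass bound only rules this out for components that remain a positive distance inside, which is exactly why the assertion is localized to $B_{1-\eta}(0)$.
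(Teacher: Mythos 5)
Your proposal is correct and follows essentially the same route as the paper: $\partial\Upsilon\cap\partial B_1(0)\neq\emptyset$ from Lemma \ref{nointerior}, and the finiteness via the mass bound of Lemma \ref{LaplaceinZeroSet} against the total-mass upper bound of Lemma \ref{LemLapRinB}. The one genuine improvement is your handling of the non-emptiness clause: the paper negates it and re-derives a contradiction by harmonic replacement, which amounts to reproving Lemma \ref{lemnonemptynter}; you instead observe that $\mathcal{C}_\Upsilon\subseteq\partial\Omega(u)$ so $d(x,\partial\Omega(u))=0$ for every $x\in\mathcal{C}_\Upsilon$, and then Lemma \ref{lemnonemptynter} with $r<\min(\ep,\eta)$ immediately furnishes a point of $\{u=0\}^o$ in $B_\ep(x)$ -- this is cleaner and correctly uses $\partial B_r(x)$ rather than the open ball. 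You also frame the proof directly (case split on $0\in\partial\Upsilon$) where the paper argues by contradiction; the content is the same. Finally, your mass lower bound $c(\eta-\ep)$, valid for $\ep<\eta$, is the honest consequence of Lemma \ref{LaplaceinZeroSet} via $d(O,0)\le 1-\eta+\ep$; the paper's stated bound $\min\{\ep_0,\eta\}$ does not obviously follow from that lemma, and your explicit restriction ``$\ep$ small compared to $\eta$'' makes the hidden constraint visible rather than papering over it.
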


\begin{proof}(By way of contradiction) Let $\Upsilon$ is a connected component of $\{x| u(x)=0\}^o$ in $B_1(0)$. By Lemma \ref{nointerior}, $\Upsilon$ cannot be strictly inside of $B_1(0)$, so the only possibility is that $\partial\Upsilon \cap \partial B_1(0)\neq \emptyset$, $0 \notin \partial\Upsilon$, and either there exists $x_0\in \mathcal{C}_\Upsilon\cap B_{1-\eta}(0)$, for some $\eta>0$, with a ball $B_r(x_0)$ such that $B_r(x_0)\cap \{x| u(x)=0\}^o = \emptyset $ or the set
 $$\ds \bigcup_{x\in \mathcal{C}_\Upsilon\cap B_{1-\eta}(0)}\{O |\; O \text{is a component of }\;\{x|u(x)=0\}^o \;\text{such that}\; B_{\ep_0}(x)\cap O \neq \emptyset \}$$
 has infinitely many elements for some $\ep_0>0$. \par If there exists a ball $B_r(x_0)$ such that $B_r(x_0)\cap \{x| u(x)=0\}^o = \emptyset $, then consider the harmonic function $h(x)$ defined in $B_{r/2}(x_0)$ with $h=u$ on $\partial B_{r/2}(x_0)$. Hence, $h(x)>u(x)$ in $B_{r/2}(x_0)$ moreover $\max\{u,h\}$ is a larger subsolution than $u$. Contradiction.
 \par If the set
 $$\ds \bigcup_{x\in\mathcal{C}_\Upsilon\cap B_{1-\eta}(0)} \{O | \; O\; \text{is a component of } \;\{x|u(x)=0\}^o \;\text{such that} \; B_{\ep_0}(x)\cap O \neq \emptyset \}$$
 has infinitely many elements for some $\ep_0>0$, then every component has a contribution to the total mass of $\Delta u$ by Lemma \ref{LaplaceinZeroSet} which is at least $\min\{\ep_0,\eta\}$. On the other hand, the total mass of $\Delta u$ is finite in $B_1(0)$. Contradiction. Thus, there are at most finitely many distinct connected components of $\{x|u(x)=0\}^o$ in this case. Hence, the result follows.
\end{proof}
\subsection{$u$ has a nontrivial linear growth in $\Gamma$}

\begin{lem}\label{finiteHausdrfMeas.} Let $\Gamma$ be any connected component of $\Omega(u)$ in $B_1(0)$, then $\mathcal{H}^1(\partial\Gamma) < +\infty$.
  \end{lem}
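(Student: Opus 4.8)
The plan is to bound $\mathcal{H}^1(\partial\Gamma)$ from above by comparing the length of the free boundary to the total mass of $\Delta u$ inside a fixed neighborhood, which we already know to be finite by Lemma \ref{LemLapRinB}. The key mechanism is a Vitali-type covering of $\partial\Gamma$ by balls of the form $B_{\ep_r}(y_r)$ coming from tangent balls to the zero set, exactly as in the construction inside the proofs of Lemma \ref{LemLapRinB} and Lemma \ref{LaplaceinZeroSet}, together with the lower Laplacian mass estimate $\int_{B_{\ep_{r}}(y_{r})}\Delta u\,dx \geq C\,\ep_{r}$ established there.

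First I would fix $x_0\in\partial\Gamma$ and work in the normalized ball $B_1(0)$. For each point $z\in\partial\Gamma$, since $z\in\partial\Omega(u)$ and (by the normalization assumption of this chapter, via Lemma \ref{lemnonemptynter}) there is a tangent ball $B_{\ep_z}(y_z)\subseteq\{u=0\}^o$ touching $\Omega(u)$ at a point $x_z$ with $z$ within a controlled distance of $x_z$, I get a family of balls $\{B_{\ep_z}(x_z)\}$ in which $u$ grows linearly by Theorem \ref{linearGrowth}, hence carries Laplacian mass $\gtrsim \ep_z$ by the computation in Lemma \ref{LemLapRinB}. Next I would extract, by the Vitali covering lemma, a countable disjoint subfamily $\{B_{\ep_j}(x_j)\}$ such that $\{B_{5\ep_j}(x_j)\}$ covers a neighborhood of $\partial\Gamma$ inside $B_{1/2}(0)$. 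Disjointness plus the mass lower bound gives
\[
\sum_j \ep_j \;\leq\; C\sum_j \int_{B_{\ep_j}(x_j)}\Delta u\,dx \;\leq\; C\int_{B_1(0)}\Delta u\,dx \;\leq\; C',
\]
where the last inequality is Lemma \ref{LemLapRinB}. On the other hand the covering $\{B_{5\ep_j}(x_j)\}$ of $\partial\Gamma$ shows, essentially by definition of one-dimensional Hausdorff measure, that $\mathcal{H}^1(\partial\Gamma)\leq C\sum_j \ep_j$. Combining the two displays yields $\mathcal{H}^1(\partial\Gamma)\leq C''<+\infty$. Since only finitely many normalized balls $B_1(0)$ are needed to cover $\partial\Gamma$ (it lies in a bounded region), this completes the bound.

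The main obstacle I expect is bookkeeping at the interface between the covering and the Laplacian mass: one must ensure the tangent balls $B_{\ep_z}(y_z)$ chosen for different boundary points can be organized so that (after replacing them by concentric balls centered at the free boundary contact points $x_z$) the Vitali selection still gives a genuine cover of $\partial\Gamma$, and that the constant in $\int_{B_{\ep_j}(x_j)}\Delta u\geq C\ep_j$ is uniform — this last point is precisely the content of the chain of inequalities in the proof of Lemma \ref{LemLapRinB} and uses linear growth both from above (Lipschitz, Theorem \ref{LipscThm}) and from below (Theorem \ref{linearGrowth}). A secondary subtlety is that $\partial\Gamma$ may meet $\partial B_1(0)$; there one simply works in $B_{1/2}(0)$ and notes that the portion of $\partial\Gamma$ near $\partial B_1(0)$ is handled by a finite overlapping family of normalized neighborhoods, keeping the total estimate finite.
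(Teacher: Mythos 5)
Your overall strategy is the same as the paper's: bound the ``length'' of $\partial\Gamma$ by the total mass of $\Delta u$ in $B_1(0)$ (finite by Lemma~\ref{LemLapRinB}), distributing that mass across a cover of $\partial\Gamma$ by balls in which $\Delta u$ has mass comparable to the radius. However, as written your argument has a genuine gap in the last step. A Vitali extraction produces \emph{one} disjoint family $\{B_{\ep_j}(x_j)\}$ with $\{B_{5\ep_j}(x_j)\}$ covering $\partial\Gamma$ and $\sum_j\ep_j\leq C$; this bounds the Hausdorff pre-measure $\mathcal{H}^1_{\delta_0}(\partial\Gamma)$ only at the single scale $\delta_0 = \sup_j 10\ep_j$. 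Since $\mathcal{H}^1 = \lim_{\delta\to 0}\mathcal{H}^1_\delta$ and $\mathcal{H}^1_\delta$ is \emph{non-decreasing} as $\delta\to 0$, a bound at one scale does not propagate to $\mathcal{H}^1$: there is nothing in the Vitali extraction forcing $\sup_j\ep_j$ to be small, so a priori you have only bounded $\mathcal{H}^1_{\delta_0}$ for some fixed $\delta_0$ of order one. To close the gap you must run the argument at every scale $\delta>0$ --- restricting to tangent balls of radius $\leq\delta$, which always exist near any $z\in\partial\Gamma$ since $\{u=0\}^o$ accumulates at $z$ at every radius by Lemma~\ref{lemnonemptynter} and tangent balls can always be shrunk --- thereby obtaining $\mathcal{H}^1_\delta(\partial\Gamma)\leq C$ uniformly in $\delta$ and hence $\mathcal{H}^1(\partial\Gamma)\leq C$.

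The paper avoids this issue entirely by covering $\Gamma$ with an almost-disjoint family of balls of a \emph{fixed} radius $\ep$ (a mesh), observing that each ball meeting $\partial\Gamma$ carries Laplacian mass at least $\kappa\ep$, and hence that the count of such balls is at most $C/(\kappa\ep)$; this directly bounds $\mathcal{H}^1_{2\ep}(\partial\Gamma)$ by a constant independent of $\ep$, giving $\mathcal{H}^1(\partial\Gamma)<\infty$ immediately. Your secondary concern about aligning the tangent-ball centers $y_z$ with nearby boundary points $x_z$ is real but manageable (the touching point of a tangent ball of radius $\ep_z$ obtained from Lemma~\ref{lemnonemptynter} at scale $r$ lies within $O(r)$ of $z$); the genuine issue is the missing multi-scale step above.
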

\begin{proof}
Let us restrict $u$ only on $\Gamma$ and work on the total mass of $\Delta u$ in $\Gamma$. Let us denote $w=u|_{\Gamma}$ in $B_1(0)$ with $w\equiv 0$ in $B_1(0)\backslash\Gamma$. By Lemma \ref{LemLapRinB}, there exists a universal constant $C>0$ such that we have
$$ C \geq  \ds\int_{B_1(0)}\Delta u dx \geq  \ds\int_{\Gamma}\Delta w dx.
$$

Since $\Gamma$ can be covered by a countable union of almost disjoint balls of radius $\ep$ and $w$ is harmonic in $\Gamma$, we have
$$
\begin{array}{c}
  C  \geq \ds\int_{\Gamma}\Delta w dx = \ds \sum_{j} \ds\int_{B_\ep(x_j)}\Delta w dx.
\end{array}
$$
If $B_\ep(x_j) \subseteq \Gamma$, then $w$ is harmonic in $B_\ep(x_j)$; so $\ds\int_{B_\ep(x_j)\subseteq \Gamma}\Delta w dx=0$. If $B_\ep(x_j) \cap \partial\Gamma \neq\emptyset$, then the total mass of $\triangle w \sim \ep$ in $B_\ep(x_j)$, by Lemma \ref{LemLapRinB}; so there exists $\kappa>0$ such that $\ds\int_{B_\ep(x_j)}\Delta w dx \geq \kappa \ep$.
Thus,

$$
\begin{array}{lcl}
  C & \geq &\ds \sum_{j} \ds\int_{B_\ep(x_j)}\Delta w dx\\
  & \geq &\ds \sum_{\{j|B_\ep(x_j) \cap \partial\Gamma \neq \emptyset \}} \ds\int_{B_\ep(x_j)}\Delta w dx \\
  & \geq &\ds \sum_{\{j|B_\ep(x_j) \cap \partial\Gamma \neq \emptyset \}} \kappa \ep. \\
\end{array}
$$
Hence, the number of balls with radius $\ep$ that cover $\partial\Gamma$ is at most $\frac{C}{\kappa\ep}$.
As a result, we obtain that $\mathcal{H}^1(\partial\Gamma) < +\infty$.
\end{proof}


\begin{lem}\label{zerosetlinearly}
  Let $w$ be a harmonic function in $\Omega \subseteq B_1(0)$ with the following properties:
  \begin{enumerate}
    \item $0\in \partial\Omega$,
    \item $\Omega$ is in the upper half-plane, i.e. $\Omega \subseteq B_1(0)\cap \{x| x_n \geq 0\}$,
    \item $w$ has a linear growth in $\Omega$,
  \end{enumerate}
then for any cone, $C_0$, in $B_1(0)\cap \{x| x_n \geq 0\}$ and for any $r\in [0,1]$, the number of $n$ that satisfies $r_n=\ds\frac{r}{2^n}$  with $\{x\in C_0| |x|< {r_n}/2, w(x)=0\}\neq \emptyset$ is finite. Shown in Figure \ref{ResimCone}.
\begin{figure}
\includegraphics[width=2in]{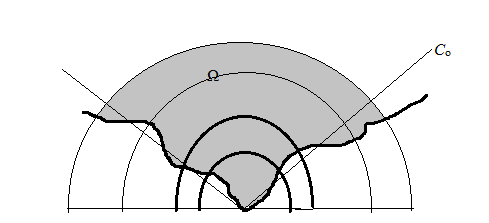}\\
  \caption{Number of $r_n$ is finite}\label{ResimCone}
\end{figure}

\end{lem}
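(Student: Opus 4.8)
The plan is to argue by contradiction through a blow-up (rescaling) argument, the final contradiction coming from the rigidity of nonnegative harmonic functions that have linear growth at a boundary point of a domain contained in a half-plane.

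Suppose the conclusion fails for some cone $C_0$ with vertex $0$ and $\overline{C_0}\setminus\{0\}\subseteq\{x_n>0\}$, and some $r$; then there is an infinite set $\mathcal N$ of indices with points $z_n\in C_0$, $|z_n|<r_n/2$, $w(z_n)=0$. Set $s_n:=|z_n|\to0$ and rescale $w_n(x):=w(s_nx)/s_n$ on $\Omega_n:=s_n^{-1}\Omega$. Each $w_n\ge0$ is harmonic in $\Omega_n\subseteq\{x_n\ge0\}$, has $0\in\partial\Omega_n$, and keeps the linear growth bound on every fixed ball once $n$ is large. The rescaled points $\zeta_n:=z_n/s_n$ lie on $\partial B_1(0)\cap C_0$ (the cone is scale-invariant through its vertex) and satisfy $w_n(\zeta_n)=0$. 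Using the uniform sup-bound on compacts coming from the upper linear estimate, interior gradient estimates for harmonic functions, and the uniform Lipschitz bound on $w$ (which holds in the applications, where $w=u|_\Gamma$ and $u$ is Lipschitz by Theorem \ref{LipscThm}), we extract a subsequence along which $w_n\to w_\infty$ locally uniformly with $\Omega_n\to\Omega_\infty$ and $\partial\Omega_n\to\partial\Omega_\infty$ locally in Hausdorff distance. Then $w_\infty\ge0$ is harmonic in $\Omega_\infty\subseteq\{x_n\ge0\}$, $0\in\partial\Omega_\infty$, $w_\infty$ still has linear growth at $0$ (so $w_\infty\not\equiv0$), and $\zeta_n\to\zeta_\infty\in\overline{C_0}\cap\partial B_1(0)\subseteq\{x_n>0\}$ with $w_\infty(\zeta_\infty)=0$, hence $\zeta_\infty\notin\Omega_\infty$.

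The heart of the proof is to show this configuration is impossible, i.e. that $\Omega_\infty=\{x_n>0\}$ and $w_\infty=\alpha x_n^+$ for some $\alpha>0$, which is absurd because then the zero set $\{x_n\le0\}$ of $w_\infty$ misses the open cone $C_0\ni\zeta_\infty$. For this one has to promote the single omitted point $\zeta_\infty$ to a genuine omitted wedge with vertex $0$: if $\Omega_\infty^c$ contained a wedge of opening $\gamma>0$ at $0$, then near $0$ the domain $\Omega_\infty$ would sit inside the half-plane minus that wedge, a union of sectors of opening $\beta\le\pi-\gamma<\pi$, and comparing $w_\infty$ in each sector with the leading positive harmonic function vanishing on the two radial sides of the sector (vertex behaviour $|x|^{\pi/\beta}$ with $\pi/\beta>1$) would force $\sup_{B_\rho(0)\cap\Omega_\infty}w_\infty\le C\rho^{\pi/(\pi-\gamma)}=o(\rho)$, contradicting the linear lower bound. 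This is precisely the mechanism behind the gradient blow-up recorded in Lemmas \ref{unbddgrad} and \ref{biggradient for2planes}.

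I expect the main obstacle to be exactly the step creating the wedge: a single omitted point of $\Omega_\infty^c$ costs nothing, so one must use that omitted points persist through all the geometric scales $r_n$, not just one. Two ways to realize this: iterate the rescaling and take a diagonal subsequence so that the limiting domain becomes scale-invariant (a cone with vertex $0$), from which the wedge is immediate; or avoid the limit altogether and run the sector comparison directly at each scale $r_n$, invoking Lemmas \ref{unbddgrad}--\ref{biggradient for2planes} at every scale to accumulate an unbounded normal derivative and contradict the uniform Lipschitz bound on $w$. A secondary technical point is justifying the Hausdorff convergence of $\partial\Omega_n$ and that $w_\infty$ vanishes on $\Omega_\infty^c$ (rather than merely being harmonic across it); the linear lower bound is what keeps the $\Omega_n$ from degenerating.
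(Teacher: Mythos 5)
Your proposal is genuinely different in route from the paper's proof, and it has a real gap precisely at the step you yourself flag as the ``heart of the proof.''

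The paper does not pass to a limit at all. It runs a direct iterated (boundary-)Harnack argument on $w$ itself: from the Lipschitz bound, $w(x)\le\kappa x_n$; since $\kappa x_n - w\ge 0$ vanishes on $\{x_n=0\}$, its positive value at the zero $y_k\in C_0\cap B_{r_k/2}$ (which is of order $\kappa|y_k|$ because $y_k$ sits in the cone) propagates via Harnack to a definite improvement of the linear upper barrier on the smaller half-ball; repeating this at the next scale $r_{k+1}$, where there is another zero in the cone, strictly improves the barrier again. Iterating over the infinitely many scales where the cone contains a zero forces $w$ to decay faster than linearly at $0$, contradicting the assumed nontrivial linear growth. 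The whole argument lives at finite scales, with no compactness needed.

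Your blow-up route is a reasonable instinct, but as sketched it stops exactly where the difficulty begins. After extracting a limit $w_\infty$ with a zero at $\zeta_\infty\in\partial B_1\cap\overline{C_0}$, the strong maximum principle does promote that point to an omitted \emph{neighborhood} of $\zeta_\infty$, but not to an omitted \emph{wedge with vertex at $0$}, and it is only the wedge at $0$ that degrades the growth exponent at the origin. An omitted ball sitting at unit distance from $0$ is perfectly compatible with linear growth of a harmonic function at $0$, so the sector-comparison you invoke does not yet apply. Your first proposed repair --- ``iterate the rescaling and take a diagonal subsequence so that the limiting domain becomes scale-invariant'' --- is the gap: there is no mechanism here (no Weiss/Almgren-type monotonicity formula, no frequency function) that makes the iterated blow-up converge to a one-homogeneous profile, and the zeros at different dyadic scales need not align into a ray in the limit. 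In fact once you rescale by $s_n=|z_n|\to 0$, the zeros at coarser scales escape to infinity and the zeros at finer scales do not a priori accumulate along a fixed direction, so the limiting zero set near $0$ carries no cone structure. Your second proposed repair --- ``run the sector comparison directly at each scale $r_n$'' --- abandons the blow-up entirely and is, in spirit, the paper's argument; but then the compactness machinery you set up in the first half of the sketch is unused, and the scale-by-scale improvement still needs to be established, which is the content of the Harnack iteration, not of Lemmas \ref{unbddgrad}--\ref{biggradient for2planes} (those are stated for harmonic functions on a full ball, not on a sector with vertex at the origin, and would have to be adapted). So either way there is a missing step: the promotion from a zero in the cone at infinitely many scales to a quantitative decay of $w$ at $0$. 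The paper closes that gap by the iterated Harnack improvement; your sketch gestures at it but does not supply it.

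A secondary point: the lemma as stated only assumes linear growth, but both your sketch and the paper's argument actually use a global Lipschitz upper bound $w\le\kappa x_n$. That is fine in the application (where $w=u|_\Gamma$ and $u$ is Lipschitz), and you say so explicitly; just be aware that it is an extra hypothesis being smuggled in.
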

Note that $C_0$ has the following representation:
$$C_0= \{(y,x_n)\in B_1(0)|x_n \geq0 \;\text{and}\; |y|\leq 1-a\}\;\text{for some} \;a>0.$$
\begin{proof}
  (By way of contradiction) Assume that there exist a cone $C_0$ and $r>0$ with a sequence of the form  $r_{n_k}=\ds\frac{r}{2^{n_k}}$ such that $\{x\in C_0| |x|< \frac{r_{n_k}}{2}\; \text{and}\; w(x)=0\}\neq \emptyset$. For simplicity, let us represent this sequence by $r_k$, then there exists $y_k$ such that $|y_k|\leq {r_k}/2$ and $w(y_k)=0$. Then, by Lipschitz property, $w(x)\leq \kappa\langle x, e_n\rangle$ in $B_1(0)\cap \{x| x_n \geq 0\}$ for the Lipschitz constant $\kappa>0$ of $w$. Moreover, $\kappa\langle x, e_n\rangle-w(x)$ is a harmonic function in $B_1(0)\cap \{x| x_n \geq 0\}$ so by the Harnack inequality we have  $ \kappa\langle y_k, e_n\rangle \leq C(\kappa\langle x, e_n\rangle-w(x) )$ in $B_{{r_k}/2}(0)$. Therefore,
  $$ w(x) \leq \kappa\langle x, e_n\rangle-\frac{\kappa}{C} \langle y_k, e_n\rangle \;\text{in}\;B_{{r_k}/2}(0).$$
  By construction, we have $B_{r_{k+1}}(0)\subseteq B_{{r_k}/2}(0)$ and $y_{k+1}\in B_{r_{k+1}/2}(0)$
with $w(y_{k+1})=0$. Consider the harmonic function   $$\kappa\langle x, e_n\rangle-\frac{\kappa}{C} \langle y_k, e_n\rangle - w(x)\geq 0\;\text{in}\;B_{{r_k}/2}(0);$$
by the Harnack inequality, we have
$$C\kappa\langle x, e_n\rangle-\kappa \langle y_k, e_n\rangle -C w(x) \geq \kappa\langle y_{k+1}, e_n\rangle-\frac{\kappa}{C} \langle y_k, e_n\rangle \;\text{in}\;B_{{r_{k+1}}/2}(0).$$
Hence,
$$w(x) \leq \kappa\langle x, e_n\rangle-\frac{\kappa}{C} \langle y_k, e_n\rangle- \frac{\kappa}{C}\langle y_{k+1}, e_n\rangle+\frac{\kappa}{C^2} \langle y_k, e_n\rangle  \;\text{in}\;B_{{r_{k+1}}/2}(0).$$
If we continue this iteration, we obtain a decay on the right hand side faster than linearity. This contradicts to the linear growth of $w$. Hence, the result follows.

\end{proof}
So far, we know that for any $x \in \partial\Omega(u)$ and a given ball $B_r(x)$, Non-Degeneracy condition will be attained from a connected component of $\Omega(u)$ in $B_r(x)$ but not necessarily will be attained from all the components of $\Omega(u)$. Next, we will show that Non-Degeneracy condition is true for each of the connected component of $\Omega(u)$ in $B_r(x)$.
\begin{lem}\label{NonDg_in_Gamma}
  Let $\Gamma$ be any connected component of $\Omega(u)$ in $B_1(0)$ and $B_r(y)$ is tangent from inside to $\partial\Gamma$ at $x_0$, then there exists a universal constant $C>0$ such that
   $$\ds\sup_{B_h(x_0)\cap \Gamma} u \geq C h, \; \text{for any } h \sim r.$$
\end{lem}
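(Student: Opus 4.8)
The plan is to argue by contradiction and recover, for the single component $\Gamma$, the same non-degeneracy bound that Theorem \ref{NonDegrteinB} gives for $u$ globally; the only real issue is that the mass of the Laplacian fed into Green's formula must be produced \emph{along $\partial\Gamma$} and not merely along $\partial\Omega(u)$. Fix $h\sim r$ and suppose, for contradiction, that $\sup_{B_h(x_0)\cap\Gamma}u<\delta h$ for a small $\delta>0$ to be chosen, and write $\nu=(y-x_0)/r$ for the inner normal to $\Gamma$ at $x_0$.

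First I would localize to the component: set $w=u|_{\Gamma}$, extended by $0$ on $B_1(0)\setminus\Gamma$. Then $w$ is harmonic in $\Gamma$, subharmonic in $B_1(0)$, $0\le w\le u$, $w=u$ on $B_r(y)$, and at every $z\in\partial\Gamma\cap\partial\Omega(u)$ the inequality $w\le u$ shows that $w$ still satisfies the free boundary condition $(iii)$ of Definition \ref{Def1}. In particular Theorem \ref{linearGrowth}, whose proof uses only the comparison principle and the FBC and not the maximality of $u$, remains available for $w$ at points of $\partial\Gamma\cap\partial\Omega(u)$ having a tangent ball from outside $\Gamma$.

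Next I would manufacture the Laplacian mass of $w$ near $x_0$. Since $B_r(y)\subseteq\Gamma\subseteq\Omega(u)$ is tangent to $\partial\Omega(u)$ at $x_0$, we have $d(y,\partial\Omega(u))=r$, so Lemma \ref{lemnonemptynter} applied to $u$ (which \emph{is} the largest subsolution) gives $\partial B_\rho(y)\cap\{u=0\}^{o}\neq\emptyset$ for every $\rho\in(r,1)$; letting $\rho\downarrow r$, these zero-interior points must accumulate at $x_0$, because every point of $\partial B_r(y)$ other than $x_0$ lies in the open set $\Gamma$. Thus at every small scale around $x_0$ there is a ball $B_{\varepsilon}(p)\subseteq\{u=0\}^{o}=\{w=0\}^{o}$ touching $\partial\Gamma$, and Theorem \ref{linearGrowth} (for $w$) upgrades each such ball to linear growth of $w$ in the concentric ball of radius $\varepsilon$ about its touching point. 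Packing a disjoint subfamily of such balls that covers a radial segment of length $\sim r$ issuing from $x_0$ into $\Gamma$, and summing the corresponding $\Delta w$-masses exactly as in Lemma \ref{LemLapRinB}, one obtains $\int_{B_{\rho}(x_0)}\Delta w\,dx\ge c\rho$ for some $\rho\sim r$. Feeding this into Green's representation
$$0=w(x_0)=\int_{\partial B_\rho(x_0)}w\,G_\nu\,ds+\int_{B_\rho(x_0)}G\,\Delta w\,dx,\qquad G(\cdot,x_0)=\tfrac{1}{2\pi}\ln\tfrac{|\cdot-x_0|}{\rho},$$
and using $G\le -c_1$ on $B_{\rho/2}(x_0)$ together with $G_\nu\sim\rho^{-1}$ on $\partial B_\rho(x_0)$ yields $\sup_{\partial B_\rho(x_0)}w\ge c_2\rho$, i.e. $\sup_{B_\rho(x_0)\cap\Gamma}u\ge c_2\rho$. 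Since $\rho\sim r\sim h$, this contradicts $\sup_{B_h(x_0)\cap\Gamma}u<\delta h$ once $\delta$ is chosen small, and tracking the comparabilities gives the assertion for every $h\sim r$.

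The main obstacle is the third step, specifically the claim that the tangent balls to $\partial\Omega(u)$ produced by Lemma \ref{lemnonemptynter} near $x_0$ really touch the component $\Gamma$ (and not some other component of $\Omega(u)$ that might abut $x_0$), so that the resulting linear growth contributes to $\Delta w$ and not merely to $\Delta u$; this is exactly where the interior tangent ball $B_r(y)\subseteq\Gamma$ is used, since it forces the zero set to approach $x_0$ from the side of $\Gamma$. A secondary technical point is verifying that $w=u|_{\Gamma}$ genuinely satisfies the FBC along $\partial\Gamma\cap\partial\Omega(u)$ and that the portion of $\partial\Gamma$ lying on $\partial B_1(0)$ causes no difficulty in the barrier comparisons, which is handled as in the proof of Lemma \ref{nointerior}.
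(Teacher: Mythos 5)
Your proposal takes a genuinely different route from the paper's, but it contains a serious logical gap at the point you label a ``secondary technical point,'' namely the claim that $w=u|_{\Gamma}$ satisfies the free boundary condition $(iii)$ of Definition~\ref{Def1}.

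You assert that ``$w\le u$ shows that $w$ still satisfies the free boundary condition.'' This implication goes the wrong way. The subsolution FBC at a point $z$ with an exterior tangent ball reads: \emph{whenever} $w(x)\le\alpha\langle x-z,\nu\rangle^{+}+o(|x-z|)$ near $z$, \emph{then} $\alpha\ge\sqrt{f(z)}$. Since $w\le u$, the hypothesis of this implication is satisfied by a strictly larger family of test slopes $\alpha$ for $w$ than for $u$; that makes the FBC for $w$ a \emph{stronger} statement than the FBC for $u$, not a consequence of it. Concretely, if near $z\in\partial\Gamma$ there is a second component $\Gamma_{o}$ of $\Omega(u)$ on which $u$ grows linearly, then $u$ cannot be touched from above by a small-slope plane (so the FBC for $u$ holds vacuously for small $\alpha$), yet $w$, which vanishes on $\Gamma_{o}$, \emph{can} be touched by such a plane, and nothing you have said rules out $\alpha<\sqrt{f(z)}$. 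This is not a side issue: the possible failure of the FBC for $u|_{\Gamma}$ in the presence of a second abutting component is exactly the nontrivial situation the lemma must rule out, so invoking it at Step~2 makes the argument circular. Once this step is questioned, the rest of the chain (applying Theorem~\ref{linearGrowth} to $w$, producing $\Delta w$-mass $\gtrsim\rho$ near $x_{0}$, Green's formula) collapses, because the linear growth of $w$ along $\partial\Gamma$ is precisely what you were trying to prove. A smaller but related worry, which you do flag, is that the exterior tangent balls in $\{u=0\}^{o}$ produced by Lemma~\ref{lemnonemptynter} near $x_{0}$ may touch $\partial\Gamma_{o}$ rather than $\partial\Gamma$; the interior ball $B_{r}(y)\subseteq\Gamma$ forces zero-interior points to accumulate at $x_{0}$, but it does not control which component the \emph{enlarged} tangent balls touch when $\Gamma$ and $\Gamma_{o}$ both abut $x_{0}$ with only the (measure-zero) free boundary between them.

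For comparison, the paper's proof confronts the two-component scenario head on instead of trying to localize. It uses the global Non-Degeneracy (Theorem~\ref{NonDegrteinB}) to produce a second component $\Gamma_{o}$ with linear growth near $x_{0}$, then invokes Lemma~\ref{zerosetlinearly} to find a dyadic scale $r_{n}$ at which the zero set near $x_{0}$ occupies an arbitrarily small arc of $\partial B_{r_{n}}(x_{0})$, and finally performs a harmonic replacement of $u$ in $B_{r_{n}}(x_{0})$, using a Poisson-kernel estimate (as in Lemmas~\ref{unbddgrad} and~\ref{biggradient for2planes}) to show the replacement has gradient exceeding $\sqrt{\Lambda}$ on the thin zero set, which produces a subsolution strictly larger than $u$ — a contradiction with maximality. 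If you want to repair your argument along the lines you proposed, you would need a substitute for the FBC of $w$, and the paper's Lemma~\ref{zerosetlinearly} together with the maximality of $u$ is exactly the mechanism it provides.
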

\begin{proof}
  (By way of contradiction) Assume that for sufficiently small $\delta>0$, we can find $r_n>0$ such that
  $$\ds\sup_{B_{r_n}(x_0)\cap \Gamma} u \leq \delta r_n.$$
  By Theorem \ref{NonDegrteinB}, $u$ is Non-Degenerate so there exists $C>0$ such that
   $$\ds\sup_{B_{r_n}(x_0)} u \geq C r_n.$$
   Therefore, for $\delta>0$ small enough, $\Omega(u)\setminus \Gamma$ is nonempty around $x_0$ and$u$ grows linearly in this set. Let us denote this set as $\Gamma_o$ and we have $\Gamma_o \subseteq \Gamma^c$. Let $\eta$ be the inner normal vector of $\Gamma_o$ at $x_0$ then we can normalize $B_{r_n}(x_0)$ to $B_1(0)$ with mapping $x_0 \mapsto 0$ and $\eta\mapsto e_2$ with $u(x)= e^{i\theta}\frac{w(x-x_0)}{r_n}$ where $\theta \geq 0$ is the angle in between $\eta$ and $e_2$. By Lemma \ref{zerosetlinearly}, for any cone $C_0 \subseteq B_1(0)\cap \{x| x_n \geq 0\}$ and for any $r\in [0,1]$, the number of $k$ that satisfies $r_k=\ds\frac{r}{2^k}$  with $\{x\in C_0| |x|< {r_k}/2, w(x)=0\}\neq \emptyset$ is finite. Hence, for $\ep>0$ there exists $r_n>0$ small enough such that the arc-length of $\partial B_{r_n}(x_0)\cap B_r(y) > \pi r_n-\ep$ and
   the arc-length of  $\partial B_{r_n}(x_0)\cap \Gamma_o > \pi r_n-\ep$. Now, we can construct a larger subsolution by taking the harmonic function $h(x)$ in $B_{r_n}(x_0)$ such that $h(x)=u(x)$ for $x\in \partial B_{r_n}(x_0)$. We can estimate $h_\nu(x)$ for $x\in \partial\Omega(h)$ as we did in the proofs of Lemma \ref{unbddgrad} and \ref{biggradient for2planes}. We can write down the Poisson formula for $h$ and estimate
$h_\nu(x)$ by the first order incremental quotient so we have

$$\begin{array}{ccc}
   \ds  \frac{h(x+s\nu)-h(x)}{s}& = & \ds\frac{1-|x+s\nu|^2}{2\pi r_n s}
\int_{\partial B_{r_n}(x_0)}\frac{h(y)}{|x+s\nu-y|^2}dS(y)\\
     & \geq & \ds\frac{[1-|x+s\nu|^2]}{2\pi r_ns} \int_{\partial
B_{r_n}(x_0)}\frac{h(y)}{|x+s\nu-y|^2}dS(y)\\
     &\geq &
 \ds\frac{[1-|x+s\nu|^2]}{2\pi r_n s} \int_{\partial
B_{r_n}(x_0)\cap \{|x-y|< 4\ep\}}\frac{h(y)}{32\ep^2+ 2s^2}dS(y)\end{array}
$$
Let us take the limit as $s \to 0$ on both sides, we get
$$h_\nu(x) \geq C  \int_{\partial
B_{r_n}(x_0)\cap \{|x_0-y|< 4\ep\}}\frac{1}{32\ep^2}dS(y)\geq \frac{C r_n}{4\ep}.$$
Hence, for sufficiently small $\ep>0$, we obtain $|\nabla h(x)|^2>\Lambda$ which implies that
$\max\{u,h\}$ is a larger subsolution than $u$. Contradiction. Hence, the result follows.
\end{proof}

\begin{thm}\label{nontrivialgrowth}$u$ has a nontrivial linear
growth in $\Gamma$, i.e. there exist universal constants
$C,c>0$ such that for any $x_0 \in \partial\Gamma$ and any $r \leq diam(\Gamma)$.

   \begin{equation}\label{nondeginCom}
   C r \geq\ds\sup_{B_r(x_0)\cap \Gamma} u \geq c r.
\end{equation}

\end{thm}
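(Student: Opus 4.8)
The plan is to establish \eqref{nondeginCom} in two halves, each of which follows by combining the component-wise non-degeneracy statement of Lemma~\ref{NonDg_in_Gamma} with the global Lipschitz bound of Theorem~\ref{LipscThm}, together with the geometric facts about zero-level components collected in the previous subsections. The upper bound $\sup_{B_r(x_0)\cap\Gamma}u\leq Cr$ is immediate: by Theorem~\ref{LipscThm}, $u$ is globally Lipschitz with a universal constant, and since $x_0\in\partial\Gamma\subseteq\partial\Omega(u)$ we have $u(x_0)=0$, so $u(x)\leq C|x-x_0|\leq Cr$ for every $x\in B_r(x_0)$, in particular on $B_r(x_0)\cap\Gamma$. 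No work is needed here beyond invoking Lipschitz regularity.

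\textbf{The lower bound.} For the lower bound I would first reduce to the case handled by Lemma~\ref{NonDg_in_Gamma}, namely where there is a ball tangent to $\partial\Gamma$ from inside $\Gamma$ at a point comparable to $x_0$. Fix $x_0\in\partial\Gamma$ and $r\leq\mathrm{diam}(\Gamma)$. The key is to produce an interior tangent ball to $\Gamma$ of radius $\sim r$ based at a point within distance $\sim r$ of $x_0$; once we have $B_\rho(y)\subseteq\Gamma$ tangent to $\partial\Gamma$ at some $x_1$ with $\rho\sim r$ and $|x_1-x_0|\lesssim r$, Lemma~\ref{NonDg_in_Gamma} gives $\sup_{B_h(x_1)\cap\Gamma}u\geq Ch$ for $h\sim\rho\sim r$, and since $B_h(x_1)\subseteq B_{Cr}(x_0)$ this yields the non-degeneracy at $x_0$ after a harmless rescaling of the radius. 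To obtain such an interior ball, I would use the fact that $\Gamma$ is a connected open set with $\mathcal H^1(\partial\Gamma)<+\infty$ (Lemma~\ref{finiteHausdrfMeas.}): by the non-degeneracy of $u$ at $\partial\Omega(u)$ (Theorem~\ref{NonDegrteinB}) applied on the scale $r$, $\Gamma$ must occupy a definite proportion of a ball of radius $\sim r$ around a suitable boundary point reached by $\Gamma$, so $\Gamma$ contains a ball of radius $cr$; sliding this ball toward $x_0$ until it first touches $\partial\Gamma$ produces the desired interior tangent ball, and the touching point stays within $O(r)$ of $x_0$. Alternatively, one can note that since $u$ has nontrivial (linear) growth somewhere in $B_r(x_0)$ coming from $\Gamma$, the component $\Gamma$ cannot pinch off too thinly near $x_0$ without contradicting Lemma~\ref{NonDg_in_Gamma} combined with the zero-set decay estimate of Lemma~\ref{zerosetlinearly}.

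\textbf{Main obstacle.} The delicate point is the reduction step: guaranteeing that a single fixed component $\Gamma$ — as opposed to the union $\Omega(u)$ — carries linear growth at the \emph{specific} point $x_0\in\partial\Gamma$ and on the \emph{specific} scale $r\leq\mathrm{diam}(\Gamma)$, with a constant independent of $x_0,r,$ and $\Gamma$. Theorem~\ref{NonDegrteinB} only gives growth from \emph{some} component inside $B_r(x_0)$, and a priori that component could be a different one touching $B_r(x_0)$. The resolution is Lemma~\ref{NonDg_in_Gamma}, whose hypothesis (an interior tangent ball of radius $\sim r$ to $\partial\Gamma$) I must verify; this in turn rests on the structural results of the previous subsection — there are only finitely many zero-level components near any interior boundary piece (Lemma~\ref{lemmazerointerior}), no zero-level component is strictly interior (Lemma~\ref{nointerior}), and the faster-than-linear decay of harmonic functions vanishing on too many dyadic scales (Lemma~\ref{zerosetlinearly}) — which together prevent $\Gamma$ from becoming cusp-like near $x_0$. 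Once the interior-tangent-ball geometry is secured, the rest is a routine rescaling argument, so I expect essentially all the difficulty to be concentrated in justifying that geometric reduction uniformly.
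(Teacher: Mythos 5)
Your upper bound is correct and matches the paper: it is just the Lipschitz estimate from Theorem~\ref{LipscThm} together with $u(x_0)=0$.

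For the lower bound, however, your proposed reduction has a genuine gap, and it is one you yourself identify as the ``main obstacle'' but then do not close. You want to produce a \emph{single} interior tangent ball $B_\rho(y)\subseteq\Gamma$ of radius $\rho\sim r$ within distance $O(r)$ of $x_0$, and then invoke Lemma~\ref{NonDg_in_Gamma} once. But the existence of such a macroscopic ball inside $\Gamma$ near $x_0$ is precisely what the theorem implies \emph{a posteriori}: if $\sup_{B_r(x_0)\cap\Gamma}u\geq cr$, the Lipschitz bound forces the point realizing the sup to sit at distance $\gtrsim r$ from $\partial\Omega(u)$, hence $\Gamma$ contains a ball of radius $\sim r$. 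Taking that ball as a \emph{hypothesis} is therefore circular. Your justification — ``by Theorem~\ref{NonDegrteinB} applied on the scale $r$, $\Gamma$ must occupy a definite proportion of a ball of radius $\sim r$'' — does not follow: Theorem~\ref{NonDegrteinB} gives growth from \emph{some} component inside $B_r(x_0)$, and a priori that mass could live entirely in a different component. The sliding-ball argument also fails to control the radius, because nothing yet rules out $\Gamma$ pinching to a thin neck near $x_0$, in which case the inscribed ball must shrink as it slides toward $x_0$. The ``alternative'' you sketch suffers from the same circularity, as it again presupposes that the linear growth in $B_r(x_0)$ ``comes from $\Gamma$.''

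The paper avoids this trap by never asking for a single large tangent ball. Instead it repeats, component-restricted, the scheme already used for Theorem~\ref{NonDegrteinB}: it collects \emph{many} interior tangent balls $B_{\ep_r}(y_r)\subseteq\Gamma$ of arbitrary (possibly very small) radii along a ray from $d_0=d(0,\Gamma)$ to $\partial B_1(0)$, applies Lemma~\ref{NonDg_in_Gamma} to each to get $\int_{B_{\ep_r}(y_r)}\Delta w\,dx\sim\ep_r$ (where $w=u|_\Gamma$), and extracts a Vitali-type disjoint subfamily whose $2$-dilations cover, so the radii sum up to $\gtrsim 1-d_0$. This yields a lower bound on the total mass of $\Delta w$ in $B_{r/2}(x_0)$ regardless of the thinness of $\Gamma$, and Green's representation in $B_r(x_0)$ then converts the mass bound into the linear growth bound, exactly as in the proof of Theorem~\ref{NonDegrteinB}. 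If you want to salvage your approach, you should replace the ``one big ball'' step with this covering-plus-Green's-representation argument; as written, the crucial geometric reduction is asserted rather than proved.
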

\begin{proof} First inequality is the direct result of Lipschitz property, so we need to prove the second inequality of \eqref{nondeginCom}. By Lemma \ref{nointerior}, we know that $\Gamma$ is a simple
connected domain. Let us just
consider $w=u|_\Gamma$ in $B_1(0)$ and denote $d_0=d(0,\Gamma)$. We will prove this theorem in two steps by combining and adapting the ideas of the proofs of Lemma \ref{LemLapRinB} and Theorem \ref{NonDegrteinB}. First, we will show that for any $r \in [d_0,1]$, there exists a ball where $w$ has a nontrivial growth. Second, we will obtain the inequality by way of contradiction with Green's representation theorem.
\par Let us start the proof of the first claim: $\Gamma$ has a curve from $d_0$ to $\partial B_1(0)$ and for any $r\in [d_0,1]$, there exists $x_r \in \Gamma$ and $B_{\ep_r}(x_r)\in \Gamma$ which is tangent from inside to $\Gamma$ at some point $y_r \in \partial\Gamma $. As we did before, pick a ray in $B_1(0)$, say  $R_1=\{r \eta | r\in[0,1], \eta \in S^1\}$. Now, we use $R_1$ in order to pick a partition of $[d_0,1]$ in terms of $\ep_r$. Consider $\{B_{\ep_r}(y_r)| r\in[d_0,1]\}$ and project all these balls
$B_{\ep_r}(y_r)$ onto $R_1$ . Hence, we obtain a covering of $R_1 \cap [d_0,1]$ by
segments $S_r=[r-h,r+h]$. Extract a disjoint subfamily $\{S_{r_j}\}$ s.t.
$2S_{r_j}$ covers $R_1 \cap [d_0,1]$. Now, resend back this subfamily to their original places. Since, $B_{\ep_r}(x_r)$ is tangent from inside to $y_r$, by Lemma \ref{NonDg_in_Gamma}, we have
$\ds\sup_{B_{\ep_r}(y_r)\cap \Gamma} u=  \ds\sup_{B_{\ep_r}(y_r)} w \geq c \ep_r.$ Now, as following the same steps of the proof of Lemma \ref{LemLapRinB}, we obtain
\begin{equation}\label{EqnLapinB}
\ds\int_{ B_{\ep_{r}}(y_{r})}(\Delta w)dx  \sim \ep_{r}.
\end{equation}

\par Now, let us prove the second inequality of \eqref{nondeginCom} by way of contradiction: Assume that there exists $x_0 \in \partial \Gamma$ with $r>0$ such that
$\ds \sup_{B_r(x_0)} w(x)<
\delta r,$
for some sufficiently small $\delta>0$. By Green's representation theorem,
we have $$
\begin{array}{ccc}
0=  w(x_0) & = & \ds\int_{\partial B_{r}(x_0)}w G_\nu ds + \ds\int_{
B_{r}(x_0)}(G  \Delta w)dx
\end{array}
$$
where $G(y,x_0)= \ds\frac{1}{2\pi} \ln \ds(\frac{1}{r}|y-x_0|)$ with $
G \equiv 0$ on $\partial B_{r}(x_0)$. Hence, $$\ds\int_{\partial
B_{r}(x_0)}w G_\nu ds =- \ds\int_{ B_{r}(x_0)}(G  \Delta w)dx \geq - \ds\int_{ B_{r/2}(x_0)}(G  \Delta w)dx. $$

Then, $G(y,x_0) \leq -C_1$ in $B_{r/2}(x_0)$ and by the first part of the proof we obtain a finite cover of $B_{r/2}(x_0) \cap \partial\Gamma$ with balls $\{B_{\ep_r}(y_r)| r\in[d_0,1]\}$.

$$
\begin{array}{ccl}
  \ds\int_{\partial B_{r}(x_0)}w G_\nu ds &\geq & -\ds\int_{ B_{r/2}(x_0)}(G
\Delta w)dx  \\
&=& -\ds\sum\int_{ B_{r/2}(x_0)\cap B_{\ep_r}(y_r) }(G
\Delta w)dx \\
  &\geq& C_2 r, \; \text{for some universal constant} \; C_2>0.
\end{array}
$$

The last inequality is true because we obtain $\Delta w \sim \ep_r $ in $B_{\ep_r}(y_r)$ by \eqref{EqnLapinB}.
$G_\nu \sim \ds\frac{1}{r}$ on $\partial B_r(x_0)$ so we obtain by the assumption
$$
\begin{array}{ccl}
 C_3 \delta r &\geq & \ds\int_{\partial B_{r}(x_0)}w G_\nu ds\geq  C_2 r.
\end{array}
$$
Contradiction; we can choose $\delta>0$ small enough so that the
above inequality fails. Hence, we get the result.
\end{proof}
\subsection{$\Omega(u)$ has a Single Component with a Positively Dense Complement}
So far, we obtain the nontrivial linear growth in every connected component of $\Omega(u)$ in $B_1(0)$, Theorem \ref{NonDg_in_Gamma}. Next, we will show that $\Omega(u)$ can have at most two components in $B_1(0)$. The intuitive
idea is the following: The component needs enough mass in $B_1(0)$ in order to have a nontrivial linear growth in $B_1(0)$. This idea directly connects this fact to the Monotonicity Formula. Because the Monotonicity formula enables us to find that how much mass a positivity set needs in order to have a specific growth-order.
\begin{rem}\label{RemLapAngle} Let $\Omega$ be a sector area enclosed by
the arc of length $\frac{2\pi}{\alpha}$ in $B_r(0)\subseteq \mathbb{R}^2$,
for some $\alpha \geq 1$, then $h(r,\theta)= r^{\alpha/2}
cos(\frac{\al}{2}(\theta+\frac{\pi}{\alpha}))$ is the harmonic function in
$\Omega$ with
\begin{equation*}
  \begin{cases}  \Delta h = 0 ,&\text{in}\;\; \Omega \\
    h(s,b)=h(s,\frac{2\pi}{\alpha})=0,& 0\leq s\leq r.
\end{cases}\end{equation*}
\end{rem}

 \begin{thm}\label{thmplanegrowth} For any given $\sigma >0$, there exist a pair of positive constants, $(\varepsilon,\delta)$, with $0<\ep<\delta<1$ such that if $\Omega(u) \cap B_1(0) \backslash B_\ep(0)$
has at least two components all of which intersects with $\partial  B_\ep(0)$, then, for some direction $e$, we have
$$ u(x) \geq C [\langle x,\pm e\rangle - \delta^2\sigma]_+, \;\text{in}\; B_\delta(0)\cap \{x|\langle x,\pm e\rangle - \delta^2\sigma>0\}.$$ Shown in Figure \ref{ResimPlaneBd}.
\begin{figure}
  \includegraphics[width=2.5in]{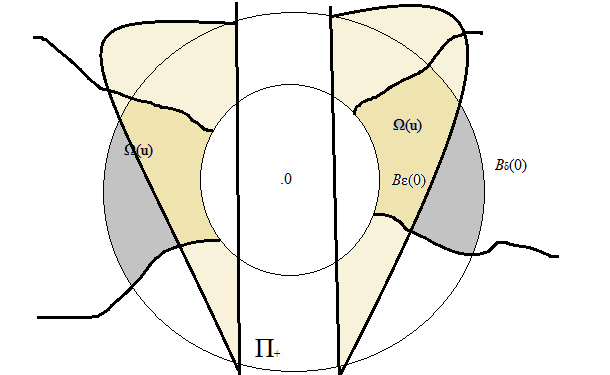}\\
  \caption{$u(x)$ is bigger than positive half planes, $\Pi_+$, in $B_\delta(0)$}\label{ResimPlaneBd}
\end{figure}

 \end{thm}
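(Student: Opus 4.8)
The plan is to read the hypothesis as saying that near $0$ the positivity set consists of two disjoint ``prongs,'' to compare the two prongs through the Alt--Caffarelli--Friedman (ACF) monotonicity formula (the ``Monotonicity Formula'' alluded to above), and to exploit that the growth of $u$ is pinched from both sides --- from below by the component-wise non-degeneracy of Theorem \ref{nontrivialgrowth} and from above by the Lipschitz bound of Theorem \ref{LipscThm} --- to force each prong, at scale $\delta$, to fill up essentially a half-disk on which $u$ grows linearly; a barrier built from the sector harmonic functions of Remark \ref{RemLapAngle} then upgrades this to the stated pointwise inequality. The heart of the matter --- and the main obstacle --- is making the pair $(\ep,\delta)$ depend only on $\sigma$ (and $\lambda,\Lambda$), not on $u$; this is where $\ep$ must be chosen much smaller than $\delta$, so that the monotone functional has many dyadic scales of ``room'' to exhaust while confined to a fixed window.

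\emph{Set-up of the competitors.} Let $\Gamma_1,\Gamma_2$ be two of the components of $\Omega(u)\cap B_1(0)\setminus B_\ep(0)$ meeting $\partial B_\ep(0)$, and let $W_i$ be the connected component of $\Omega(u)\cap B_1(0)$ containing $\Gamma_i$. In the generic case $W_1\neq W_2$ I would set $u_i:=u\,\mathbf 1_{W_i}$: these are continuous, nonnegative, subharmonic in $B_1(0)$ (harmonic where positive, vanishing on $\partial W_i\cap B_1(0)\subseteq\partial\Omega(u)$), have disjoint supports, satisfy $u_i(0)=0$ since $0\in\partial\Omega(u)$, and agree with $u$ on $\Gamma_i$. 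When $W_1=W_2$, so that the two prongs are joined inside $B_\ep(0)$, a short additional argument --- descending to the largest scale below $\ep$ at which the prongs reseparate, using that every sphere about $0$ still carries a piece of $\{u=0\}^o$ (Lemma \ref{lemnonemptynter}) and that the total mass of $\Delta u$ is finite --- reduces to the previous case; this bookkeeping must be arranged so that $\ep$ stays universal.

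\emph{Monotonicity and flatness.} For the ACF functional
$$\varphi(r)\;=\;\Big(\tfrac1{r^{2}}\!\int_{B_r(0)}|\nabla u_1|^2\,dx\Big)\Big(\tfrac1{r^{2}}\!\int_{B_r(0)}|\nabla u_2|^2\,dx\Big)$$
the ACF monotonicity formula gives that $\varphi$ is nondecreasing on $(0,1)$. The Lipschitz bound gives $\tfrac1{r^2}\!\int_{B_r}|\nabla u_i|^2\le C$, so $\varphi\le C$; the component-wise non-degeneracy of Theorem \ref{nontrivialgrowth}, via a Caccioppoli/Poincar\'e estimate together with the fact that $\{u=0\}$ occupies a fixed fraction of $B_r(0)$ (Lemma \ref{lemnonemptynter} and the partition used in Lemma \ref{LemLapRinB}), gives $\tfrac1{r^2}\!\int_{B_r}|\nabla u_i|^2\ge c>0$, so $\varphi\ge c$. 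Thus $\varphi$ is monotone and confined to $[c,C]$, a window which --- as a direct computation with the sector functions of Remark \ref{RemLapAngle} shows --- contains the value $\varphi_{\mathrm{HP}}$ of a pair of complementary half-planes (with slopes in $[\sqrt\lambda,\sqrt\Lambda]$), the minimum of $\varphi$ among two-component homogeneous configurations, achieved only there. Over the $\sim\log_2(\delta/\ep)$ dyadic scales between $\ep$ and $\delta$, $\varphi$ has total increase at most $C-c$, hence is nearly constant on all but $O(1)$ of them; the equality case of the ACF formula --- two homogeneous functions on sectors whose exponents equal $\pi/(\text{opening})$ and whose openings sum to $2\pi$, forcing each opening to be $\pi$ --- in quantitative form then produces a direction $e$ for which $(u_1,u_2)$ at scale $\delta$ is $\eta$-close to $(a\langle x,e\rangle^+,\,b\langle x,-e\rangle^+)$ in $L^2(\partial B_\delta(0))$, and then in $L^\infty(B_{\delta/2}(0))$ by the interior estimates applicable where each $u_i$ is harmonic, with $\eta$ as small as desired by the choice of $\ep$.

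\emph{From flatness to the pointwise bound.} Sup-norm closeness together with non-degeneracy shows that, for $\eta$ small in terms of $\sigma$, the prong $\Gamma_1$ contains the shifted half-disk $\{\langle x,e\rangle>\delta^2\sigma\}\cap B_\delta(0)$ and $\Gamma_2$ the opposite one. On $\Gamma_1\cap B_\delta(0)$ I would compare $u$ from below with the harmonic function vanishing on $\partial\Gamma_1\cap B_\delta(0)$ and equal to the near-linear lower bound for $u$ on $\partial B_\delta(0)\cap\Gamma_1$: the maximum principle gives that $u$ dominates it, and the half-disk Poisson estimate (the sector of opening $\pi$ in Remark \ref{RemLapAngle}, i.e. homogeneity one) shows this harmonic function is $\ge C[\langle x,e\rangle-\delta^2\sigma]_+$ on $B_\delta(0)$; the same argument in $\Gamma_2$ with $-e$ finishes both signs. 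The delicate point throughout is quantitative: converting ACF monotonicity plus the two-sided pinching into \emph{effective} half-plane flatness at the single scale $\delta$, with $(\ep,\delta)$ universal, and tracking constants through the barrier so that the shift that appears is genuinely of order $\delta^2\sigma$.
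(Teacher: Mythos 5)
Your plan takes a genuinely different route from the paper. The paper's proof of Theorem \ref{thmplanegrowth} does not invoke the monotonicity formula at all --- that tool enters only in the subsequent Corollary \ref{onecomponent}. Instead the paper argues by contradiction: assuming the conclusion fails along a sequence $\delta_k\to0$ with $\varepsilon_k=(1-\eta)\delta_k$, $u$ is pinned beneath a shifted half-plane profile in at least one of the directions $\pm e$, while two distinct components $\Omega_1,\Omega_2$ of $\Omega(u)\cap B_1\setminus B_{\varepsilon_k}$ both meet $\partial B_{\varepsilon_k}(0)$; taking boundary points $x_i$ of these near the origin and applying the component-wise non-degeneracy of Theorem \ref{nontrivialgrowth}, $\sup_{\Omega_i\cap B_r(x_i)}u\geq cr$, produces the contradiction once $\delta_k,\eta$ are small. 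So in the paper this theorem is essentially a consequence of the non-degeneracy machinery already built, with the Monotonicity Theorem \ref{MonoctyThm} held in reserve for the single-component result that follows.

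Your ACF route is in principle a valid alternative and more in line with the standard improvement-of-flatness paradigm, but as written it is a sketch whose hardest steps remain open. (i) Quantitative stability of the Friedland--Hayman/ACF equality case --- turning near-constancy of $\varphi$ across many dyadic scales into near-half-plane geometry at a definite scale --- is a substantial theorem in its own right; neither the paper nor your write-up establishes it, and without a precise version with controlled constants the universality of $(\varepsilon,\delta)$ is not yet secured. (ii) The reduction when $W_1=W_2$ is not harmless: without disjoint supports $u_1,u_2$ are not legitimate ACF competitors, and descending to a scale of re-separation inside $B_\varepsilon(0)$ changes the scale at which flatness is extracted, so the bookkeeping needed to keep $(\varepsilon,\delta)$ depending only on $\sigma,\lambda,\Lambda$ is genuinely the crux, not an afterthought. (iii) Upgrading $L^2(\partial B_\delta)$ flatness to the pointwise inequality with shift of order $\delta^2\sigma$ uses interior estimates that are only available inside $\Omega_i$, whose location is precisely what you are trying to pin down, and it requires the error in the blow-up to be second order in $\delta$; ACF flatness alone gives you first-order closeness, so an additional iteration (or a weaker statement with shift $\delta\sigma$) is needed. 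These are exactly the points you flag as delicate, but they constitute the actual proof on your route, whereas the paper sidesteps them entirely by leaning on Theorem \ref{nontrivialgrowth}.
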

\begin{proof} (By way of contradiction) Assume that there exists a $\sigma >0$ such that, for any $(\varepsilon,\delta)$ pair with $\delta> \varepsilon >0$, we have $\Omega(u) \cap B_1(0) \backslash B_\ep(0)$
has at least two components and
$$ u(x) \leq C [\langle x,  e\rangle - \delta^2\sigma]_+ \; \text{in}\; B_\delta(0)\cap \{x|\langle x, e\rangle - \delta^2\sigma>0\} $$
or
$$ u(x) \leq C [\langle x, - e\rangle - \delta^2\sigma]_+ \; \text{in}\; B_\delta(0)\cap \{x|\langle x, - e\rangle - \delta^2\sigma>0\}, $$
for any direction $e$. Let us pick a sequence $\{\delta_k\}$ such that $\delta_k \ds\rightarrow 0$, as $k \rightarrow \infty$, a direction $e$, and $\ep_k=(1-\eta)\delta_k$, for some sufficiently small $\eta >0$, then we have
\begin{equation}\label{planeeqn}
u(x) \leq C [\langle x,\pm e\rangle - \delta_k^2\sigma]_+ \; \text{in}\; B_{\delta_k}(0)\cap \{x|\langle x,\pm e\rangle - \delta_k^2\sigma>0\},
\end{equation}
in at least one of the directions $e$ or $-e$ and $\Omega(u) \cap B_1(0) \backslash B_{\ep_k}(0)$ has at least two components that each of them intersects with $\partial B_{\ep_k}(0)$, without loss of generality, let us assume that there are two components and denote them as $\Omega_1, \Omega_2$. Since, $\Omega_1$ and $\Omega_2$ intersect with $\partial B_{\ep_k}(0)$, their diameter should be at least $\eta \delta_k$, by construction. Moreover, there exist $x_1 \in \partial \Omega_1 \cap B_{\ep_k}(0)$ and $x_2 \in \partial \Omega_2 \cap B_{\ep_k}(0)$. Therefore, by Theorem \ref{nontrivialgrowth},
\begin{eqnarray}\label{eq44}
                     \ds\sup_{\Omega_1 \cap B_{r}(x_1)} u \geq C r &\text{and}& \ds\sup_{\Omega_2 \cap B_{r}(x_2)} u \geq C r,
                          \end{eqnarray}
for any $r \leq \eta \delta_k\leq \min\{diam \Omega_1, diam \Omega_2\}$. We can choose $\delta_k, \eta>0$ small enough to contradict \eqref{planeeqn}. Hence, we obtain the result.
\end{proof}
\begin{cor}\label{onecomponent} There exist universal constants $h,\ep_0>0$ such that $\Omega(u)$ has only one component from $B_\ep(0)$ to $B_1(0)$ for any $\ep < \ep_0$. Moreover, the set $\{x|u(x)=0\}^o \cap B_{1/2}(0)$ contains a ball $B_h(z)$, for some $z\in B_{1/2}$.
\end{cor}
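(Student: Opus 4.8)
The plan is to derive both conclusions from Theorem \ref{thmplanegrowth} by a contradiction argument, choosing the parameter $\sigma$ cleverly so that the conclusion of that theorem is incompatible with $0 \in \partial\Omega(u)$. First I would fix $\sigma$ to be \emph{small} — specifically small enough that on the half-ball $B_\delta(0)\cap\{\langle x,\pm e\rangle > \delta^2\sigma\}$ the lower barrier $C[\langle x,\pm e\rangle - \delta^2\sigma]_+$ is a one-sided linear function whose zero-level hyperplane passes within distance $\delta^2\sigma \ll \delta$ of the origin. Plugging this $\sigma$ into Theorem \ref{thmplanegrowth} yields the associated pair $(\varepsilon,\delta)$; set $\ep_0 := \varepsilon$. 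Now suppose, toward a contradiction, that for some $\ep<\ep_0$ the set $\Omega(u)\cap B_1(0)\setminus B_\ep(0)$ had (at least) two components meeting $\partial B_\ep(0)$. Then Theorem \ref{thmplanegrowth} forces $u(x)\geq C[\langle x,e\rangle - \delta^2\sigma]_+$ on $B_\delta(0)\cap\{\langle x,e\rangle>\delta^2\sigma\}$ for some direction $e$ (I'd run the argument for the $+e$ branch; the $-e$ branch is symmetric). But then $u$ is bounded below by a function that is strictly positive on a full half-neighborhood of the origin whose boundary hyperplane misses $0$; in particular $u>0$ on an open set accumulating at $0$ from one whole side, so a barrier/comparison computation of the kind used in Theorem \ref{linearGrowth} and Lemma \ref{biggradient for2planes} shows that near $0$ the function $u$ would satisfy $u(x)\geq \alpha\langle x-0,\nu\rangle^+ + o(|x|)$ for an $\alpha$ that I can push above $\sqrt{\Lambda}$ by rescaling (using the linear growth from \emph{both} sides that the two-component hypothesis supplies via \eqref{eq44}), contradicting either the FBC for the subsolution $u$ at $0$, or — more directly — contradicting maximality of $u$ by exhibiting a strictly larger subsolution obtained as $\max\{u,h\}$ with $h$ harmonic in a small ball around $0$ filling in the ``missing'' side. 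Either route closes the loop, so $\Omega(u)$ has exactly one component from $B_\ep(0)$ to $B_1(0)$.

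For the second assertion, once we know $\Omega(u)\cap B_1(0)\setminus B_\ep(0)$ has a single component $\Gamma$, I would argue that $\{u=0\}^o\cap B_{1/2}(0)$ is nonempty and fat. By Lemma \ref{lemnonemptynter} applied at $0\in\partial\Omega(u)$, for every $r\in(0,1)$ there is a point of $\partial B_r(0)\cap\{u=0\}^o$, hence a ball $B_{\ep_r}(x_r)\subseteq\{u=0\}^o$ tangent to $\partial\Omega(u)$; the content of the claim is that for some $r\sim 1/2$ the radius $\ep_r$ is bounded below by a \emph{universal} constant $h$. Suppose not: then along $\partial B_{1/2}(0)$ (or a fixed ray, as in Lemma \ref{LemLapRinB}) the zero set would be squeezed into arbitrarily thin slivers between components of $\Omega(u)$, forcing $\Omega(u)$ itself to have many thin fingers reaching $B_\ep(0)$ — but that contradicts the one-component conclusion just established (at scale $\delta$ there is only one component of positivity, so the complement near any boundary point at distance $\sim 1/2$ cannot be arbitrarily pinched, since pinching the zero set to width $o(1)$ around a point of $\partial\Omega(u)$ would let us fill it with a harmonic function and build a larger subsolution exactly as in Lemma \ref{lemmazerointerior}). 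Quantitatively, I would invoke Theorem \ref{thmplanegrowth} in the contrapositive one more time: if the zero component near scale $1/2$ had diameter $o(1)$, then $u$ would be bounded below by half-plane barriers from two sides at that scale, forcing a second component of $\Omega(u)$, again a contradiction. This pins $\ep_r \geq h$ for some universal $h>0$ and some $r\in[1/4,1/2]$, giving the desired ball $B_h(z)\subseteq\{u=0\}^o\cap B_{1/2}(0)$.

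The main obstacle, I expect, is the bookkeeping in the first part: Theorem \ref{thmplanegrowth} is stated with the annulus $B_1(0)\setminus B_\ep(0)$ and ``at least two components all of which intersect $\partial B_\ep(0)$,'' whereas the corollary wants to rule out two components of $\Omega(u)$ ``from $B_\ep(0)$ to $B_1(0)$'' — so I must check that the two notions match up and, crucially, that the $(\varepsilon,\delta)$ produced by Theorem \ref{thmplanegrowth} for \emph{one fixed} small $\sigma$ can be turned into a \emph{single universal} threshold $\ep_0$ independent of the free-boundary point, which is legitimate because all constants in Theorems \ref{nontrivialgrowth}, \ref{NonDg_in_Gamma} and \ref{thmplanegrowth} are universal. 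The second obstacle is making ``the zero set is not pinched'' quantitative: the clean way is to phrase it as ``if $B_\rho(z)\subseteq\{u=0\}^o$ is impossible for all $z\in B_{1/2}$ and all $\rho\geq h$, then at scale comparable to $h$ near a boundary point we see two positivity components,'' and feed that into Theorem \ref{thmplanegrowth}; I would be careful that the rescaling $\tilde u(x)=u(x_0-rx)/r$ used throughout Section \ref{ChapLocalResults} keeps all the hypotheses (namely $0\in\partial\Omega(\tilde u)$ and the reach conditions) intact so that the theorem applies verbatim after rescaling.
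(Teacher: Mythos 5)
For the first assertion, your route is genuinely different from the paper's. You propose: apply Theorem \ref{thmplanegrowth} with a fixed small $\sigma$, confine $\{u=0\}\cap B_\delta(0)$ to a thin slab of relative thickness $\sim\sigma$ around $e^\perp$, fill $B_\delta(0)$ with a harmonic function $h$, and invoke Lemma \ref{biggradient for2planes} so that $|\nabla h|$ exceeds $\sqrt{\Lambda}$ on the new free boundary, contradicting maximality of $u$. That is structurally the same mechanism the paper uses for the \emph{second} assertion. The paper's actual proof of the first assertion goes instead through the Monotonicity Formula: it restricts $u$ to the two alleged components $\Omega_1,\Omega_2$, applies Theorem \ref{MonoctyThm} to $u_1=u|_{\Omega_1}$, $u_2=u|_{\Omega_2}$, derives $J'(r)/J(r)\geq \tfrac{c_0}{r}[(t_1-1)^2+(t_2-1)^2]$ in terms of the angular traces $t_i(r)$, integrates over dyadic rings, and then contradicts the resulting ``$t_i$ close to $1$ on a ring'' with the quantitative Lipschitz separation $d(z_1,z_2)\geq c\ep/C$ of the truncated function $\tilde u=(u-c\ep)_+$. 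Your approach avoids the monotonicity computation entirely; the paper's is what actually appears, and it keeps Theorem \ref{thmplanegrowth} as scaffolding rather than as the engine of the proof. One caution on your version: the FBC you first propose to contradict (``$u\geq$ a steep plane at $0$'') is the \emph{supersolution} condition of Definition \ref{Def2}, which requires a tangent ball from \emph{inside} $\Omega(u)$ at $0$ — not automatic. Your fallback of exhibiting $\max\{u,h\}$ as a strictly larger subsolution is the one that actually closes the argument, and you should also confirm $\{u=0\}^o\cap B_\delta(0)\neq\emptyset$ (it follows from Lemma \ref{lemnonemptynter}) so that $\max\{u,h\}$ genuinely enlarges the positivity set.

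For the second assertion there is a real gap. A single component of $\Omega(u)$ in the annulus can still fold back on itself and trap a thin sliver of $\{u=0\}$ between two of its arms at scale $\sim 1/2$; ``one global component'' does not prevent two \emph{local} arms of positivity flanking a pinched zero set, so your reduction to part one does not go through. Moreover the ``contrapositive'' appeal to Theorem \ref{thmplanegrowth} is reversed: that theorem has ``two components'' as hypothesis and ``$u\geq$ half-plane barriers'' as conclusion, so the contrapositive infers one component from the \emph{failure} of the barrier bound; it cannot be used to deduce ``two components'' from thinness of the zero set. The paper's proof is a self-contained construction that does not invoke part one at all: assuming no $B_h(z)\subset\{u=0\}^o$ exists in $B_{1/2}$, it picks a zero-set point in the thin outer annulus $B_{1/2}\setminus B_{1/2-2h}$, rotates $B_h(\cdot)$ along that annulus until it meets $\Omega(u)$ in two pieces separated by a sliver of width $\lesssim h$, fills $B_r(0)$ harmonically, and applies Lemma \ref{biggradient for2planes} to exceed $\sqrt{\Lambda}$ there. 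You should replace your bootstrap with a rotating-ball (or equivalent pinch-finding) construction of this kind.
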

\begin{proof}[Proof of the first part](By way of contradiction)
Suppose that for $\ep=2^{-k_0}>0$ there exists a $\ep_0<\ep$ s.t. $B_1(0)\backslash B_{\ep_0}(0)$ has two components of $\Omega(u)$, say $\Omega_1$ and $\Omega_2$. These components are also components of $B_1(0)\backslash B_{\ep}(0)$. By Theorem \ref{NonDg_in_Gamma}, $u$ has a nontrivial growth in both $\Omega_1$ and $\Omega_2$. Let us consider $u_1(x)=u(x)|_{\Omega_1}$ and $u_2(x)=u(x)|_{\Omega_2}$, and write down the Monotonicity Formula, the Monotonicity Theorem \ref{MonoctyThm}, for them by denoting the universal Non-Degeneracy and Lipschitz constants as $c$ and $C$, respectively. Then, we have
 $$ c^4\pi^2 \leq J(\ep) \leq J(1) \leq C^4\pi^2.$$
 Note that we can obtain a lower bound for $J(r)$ by adapting the proof of Monotonicity Theorem \ref{MonoctyThm} as follows:
 $$
\begin{array}{lcl}
  \ds \frac{J'(r)}{J(r)}&=& \ds \frac{\int_{\partial B_r(0)}|\nabla u_1|^2d \sigma}{\int_{B_r(0)}|\nabla u_1|^2dx} + \ds \frac{\int_{\partial B_r(0)}|\nabla
u_2|^2d \sigma}{\int_{B_r(0)}|\nabla
u_2|^2dx} - \frac{4}{r}\\
&\geq& \ds \frac{\left(\int_{\partial B_r(0)}(u_1)_\theta^2d \sigma\right)^{1/2}}{\left(\int_{\partial B_r(0)} u_1^2d\sigma\right)^{1/2}} + \ds \frac{\left(\int_{\partial B_r(0)}
(u_2)^2_\theta d \sigma\right)^{1/2}}{\left(\int_{\partial B_r(0)}
u_2^2d \sigma\right)^{1/2}} - \frac{2}{r}.
\end{array}
$$
 If we denote the angular traces of the domains $\Omega_1$ and $\Omega_2$ in the circle of radius $r$ as $\pi t_1(r)$ and $\pi t_2(r)$, respectively, then
 the sum
 $$\ds \frac{\left(\int_{\partial B_r(0)}(u_1)_\theta^2d \sigma\right)^{1/2}}{\left(\int_{\partial B_r(0)} u_1^2d\sigma\right)^{1/2}} + \ds \frac{\left(\int_{\partial B_r(0)}
(u_2)^2_\theta d \sigma\right)^{1/2}}{\left(\int_{\partial B_r(0)}
u_2^2d \sigma\right)^{1/2}}$$
 attains its minimum for two adjacent, complementary arcs with length $\alpha 2\pi r$ and $(1-\alpha)2\pi r$ and the corresponding eigenfunctions are
$$\begin{array}{ccc}
    \sin\frac{\theta}{2\alpha r} & \text{and}& \sin\frac{\theta}{2(1-\alpha)r}.
  \end{array}
$$
Thus, we obtain

 \begin{equation}\label{EqnJ}
 \ds \frac{J'(r)}{J(r)} \geq \frac{1}{2r t_1(r)}+ \frac{1}{2r t_2(r)}- \frac{2}{r}
\end{equation}
Consider the right hand side of \eqref{EqnJ} as a function of $(t_1,t_2)$, i.e. let
$$F(t_1,t_2)= \frac{1}{2r t_1}+ \frac{1}{2r t_2}- \frac{2}{r},$$
then $F(t_1,t_2)$ has a minimum of zero at $t_1=t_2=1$ and it is strictly convex at $t_1=t_2=1$. Therefore,
$$
F(t_1,t_2) \geq \frac{c_0}{r}[(t_1-1)^2+(t_2-1)^2], \;\text{for some}\; c_0>0.
$$

If we consider $\tilde{u}(x)=[u(x)-c\ep]_+$, then $\tilde{u}$ has at least two components $\tilde{\Omega_1}\subseteq \Omega_1$ and $\tilde{\Omega_2}\subseteq\Omega_2$. Moreover, it will start to grow linearly from say $K \ep$, for some $K>0$. If we denote the Monotonicity Formula, in Monotonicity Theorem \ref{MonoctyThm}, for $\tilde{u}$ as $\tilde{J}(r)$ and the angular traces of the domains $\tilde{\Omega}_1$ and $\tilde{\Omega}_2$ in the circle of radius $r$ as $\pi \tilde{t}_1(r)$ and $\pi \tilde{t}_2(r)$, respectively,, then we have $\tilde{J}(r)\sim 1$, for any $r \in [K\ep,1]$. Therefore,
for $k_1>k_0>0$ and $2^{-k_1} > K\ep$, we have

$$
C_1 \geq \ds \int_{2^{-k_1}}^1 \frac{\tilde{J}'(r)}{\tilde{J}(r)} dr \geq \ds \int_{2^{-k_1}}^1 \frac{c_0}{r}[(\tilde{t}_1-1)^2+(\tilde{t}_2-1)^2] dr, $$

for some $C_1>0$.
Let us write down the right hand side integral with diadic representation:
$$
\begin{array}{ccc}
  C_1 & \geq &  \ds\int_{2^{-k_1}}^1 \frac{c_0}{r}[(\tilde{t}_1-1)^2+(\tilde{t}_2-1)^2] dr\\
&=& \ds \sum_{l=k_1}^{+\infty}  \ds \int_{2^{-l}}^{2^{-l+1}} \frac{c_0}{r}[(\tilde{t}_1-1)^2+(\tilde{t}_2-1)^2] dr\\
&\geq& \ds \sum_{l=k_1}^{+\infty} 2^l \ds \int_{2^{-l}}^{2^{-l+1}} c_0[(\tilde{t}_1-1)^2+(\tilde{t}_2-1)^2] dr.
\end{array}
$$
  For $\eta>0$ sufficiently small, there exists at least one ring such that
  $$
   C_1 \eta  \geq 2^l \ds \int_{2^{-l}}^{2^{-l+1}} c_0[(\tilde{t}_1-1)^2+(\tilde{t}_2-1)^2] dr,
  $$
  i.e. $[(\tilde{t}_1-1)^2+(\tilde{t}_2-1)^2] $ is close to zero most of the time therefore $\tilde{t}_1$ and $\tilde{t}_2$ are close to 1 most of the time for the radii between $2^{-l}$ and $2^{-l+1}$. This contradicts to Lipschitz property of a free boundary point $x_0 \in B_{r^*}(0)$ of $\tilde{u}$, where $r^*= \frac{1}{2}[2^{-l}+2^{-l+1}]$. There exists at least one free boundary point of $\tilde{u}$ for each radius such that this point has a neighborhood with only one component of $\Omega(\tilde{u})$, i.e. for each radius $r \in [2^{-l},r^*]$, $\tilde{t}_1$ and $\tilde{t}_2$ are different than $1$. That is because: let us suppose that $z_i \in \partial\tilde{\Omega}_i \cap \partial B_r(0)$ with $d(\tilde{\Omega}_1 \cap \partial B_r(0),\tilde{\Omega}_2\cap \partial B_r(0))=d(z_1,z_2)$, i.e. $d(z_1,z_2)$ gives the distance between $\tilde{\Omega}_1$ and $\tilde{\Omega}_2$ on $\partial B_r(0)$ so that we can determine whether $\tilde{t}_1$ and $\tilde{t}_2$ are different than $1$ or not. Since, $u(z_i)= c\ep$ and $u$ is Lipschitz, Theorem \ref{LipscThm}, we have
  $$d(z_i, \partial \Omega(u)) \geq \frac{c\ep}{C},$$
  and therefore $$d(z_1,z_2)\geq d(z_i, \partial \Omega(u)) \geq \frac{c\ep}{C}.$$
  Hence, $\tilde{t}_1$ and $\tilde{t}_2$ are different than $1$ for any $r \in [2^{-l},r^*].$ Contradiction, thus the result follows.
 \par [Proof of the second part](By way of contradiction) Assume that, for every $h_0>0$, there exists $h<h_0$, such that we have
$$\{x\in B_{1/2}(0)| B_h(x) \subseteq \{x|u(x)=0\}^o \} = \emptyset.$$
Let us construct a larger subsolution under the above assumptions. Let $x \in [B_{1/2}(0)\backslash B_{1/2-2h}(0)]\cap \{x|u(x)=0\}^o$, then $B_h(x)\cap \Omega(u) \neq \emptyset$. We claim that there exists a point $z \in  [B_{1/2}(x)\backslash B_{1/2-2h}(0)]\cap \{x|u(x)=0\}^o$ such that $B_h(z)$ intersects with $\Omega(u)$ in two components, as shown in Figure \ref{resimball}.
\begin{figure}
  \includegraphics[width=2.5in]{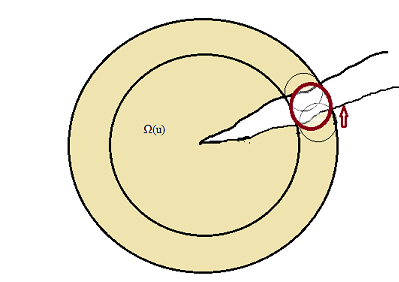}\\
  \caption{Rotate balls of size $h$ counterclockwise}\label{resimball}
\end{figure}
 In order to catch such a ball, we start from the ball $B_h(x)$, say $x=(1/2-h, \theta)$ in polar coordinates for some $\theta\in [0,2\pi)$, and rotate this ball in counterclockwise direction. There exists a point $z \in [B_{1/2}(0)\backslash B_{1/2-2h}(0)]\cap \{x|u(x)=0\}^o$ such that $B_h(z)$ is intersected with $\Omega(u)$ in two components, otherwise we can insert a ball $B_h(z_0)$ into $\{x|u(x)=0\}^o$ which contradicts to our assumption. Hence, consider $B_h(z)$ such that it is intersected with $\Omega(u)$ in two components. Then there exists $r\in [1/2-2h,1/2]$ and two points $x_1$ and $x_2$ in $B_h(z)$ such that $|x_1|=|x_2|=r$ and these points are in separate components of $\Omega(u)$. Now, we can construct a larger subsolution for sufficiently small $h>0$ as follows: Consider the harmonic function $h(x)$ in $B_r(0)$ with $w=u$ in $B_{1/2}(0)\setminus B_r(0)$. Then consider $w=max\{u,h\}$ in $B_{1/2}(0)$. Thus, $w$ becomes a larger subsolution than $u$: as we showed previously, this claim is true if the FBC, $(iii)$ in Definition \ref{Def1}, is satisfied by $w$; so if $x_0 \in \partial\Omega(w)$ with a tangent ball from outside, $x_0$ should be on $\partial B_{r}(0)\cap B_{h}(z)$ and by Lemma \ref{biggradient for2planes} we have $w_\eta(x_0) \geq \frac{C}{h}$ where $h$ is the distance between two components of $\Omega(u)$ in $B_h(z)$ which is sufficiently small this time. Hence, $w$ is a larger subsolution than $u$. Contradiction, hence we get the result.
\end{proof}
At the beginning of this section, we normalized a neighborhood, $B_r(x_0)$, of the free boundary which contains components of $\Omega(u)$ up to the radius $B_{r/2}(x_0)$. On the other hand, by normalization and Corollary \ref{onecomponent}, this neighborhood can be characterized with only two components as $\Omega(u)$ and its complement.
\\
\appendix
\section{}
Let us remind you the Monotonicity formula for
$\mathbb{R}^2$, the reader can consult to \cite{CaffSalsa} for detailed theory:
\begin{thm}\label{MonoctyThm}[Monotonicity Theorem]\index{Theorem! Monotonicity @\emph{Monotonicity}} Let $B_1(0) \in
\mathbb{R}^2$ and $u_1$, $u_2$ $\in \mathrm{H}^1(B_2(0))$, continuous and
nonnegative in $B_2(0)$, supported and harmonic in disjoint domains
$\Omega_1$, $\Omega_2$, respectively, with $0\in\partial\Omega_i$ and
 $$u_i=0 \; \text{along}\; \partial\Omega_i \cap B_1=\Gamma_i \;\; (i=1,2).$$
 Then the quantity
 $$J(R)=\ds \frac{1}{R^4}\int_{B_R(0)}|\nabla u_1|^2dx \cdot\int_{B_R(0)}|\nabla
u_2|^2dx$$
 is monotone increasing  in $R$, $R \leq 3/2$.
 \end{thm}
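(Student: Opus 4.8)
The plan is to run the classical Alt--Caffarelli--Friedman argument specialized to the plane: show that $\log J$ is non-decreasing by bounding $J'(r)/J(r)$ below in terms of the first Dirichlet eigenvalues of the circular slices $\Omega_i\cap\partial B_r$, and then close the estimate using the disjointness of $\Omega_1,\Omega_2$ together with the exact two-dimensional identity $\lambda_1(\text{arc of angular length }\alpha)=(\pi/\alpha)^2$. \emph{Set-up.} Write $D_i(r)=\int_{B_r(0)}|\nabla u_i|^2\,dx$ and $\varphi_i(r)=D_i(r)/r^2$, so $J=\varphi_1\varphi_2$. Since $u_i\in H^1(B_2)$, the coarea formula makes $r\mapsto D_i(r)$ locally absolutely continuous with $D_i'(r)=\int_{\partial B_r}|\nabla u_i|^2\,d\sigma$ for a.e.\ $r$; and as $D_i$ is nondecreasing, $\{r:D_i(r)>0\}$ is an interval $(\rho_i,3/2)$, on whose complementary initial interval $J\equiv 0$ and monotonicity is trivial. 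So assume $J(r)>0$; then
$$\frac{J'(r)}{J(r)}=\frac{D_1'(r)}{D_1(r)}+\frac{D_2'(r)}{D_2(r)}-\frac{4}{r},$$
and the theorem follows once I prove $D_i'(r)/D_i(r)\ge 2\sqrt{\lambda_i(r)}/r$, where $\lambda_i(r)$ is the first Dirichlet eigenvalue of $\omega_i(r)=\{\theta\in S^1:\,re^{i\theta}\in\Omega_i\}$.

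\emph{The Dirichlet quotient estimate.} First, $u_i$ is subharmonic in $B_2$: for any ball $B\subset B_2$ the harmonic replacement $h$ of $u_i$ on $B$ satisfies $h\ge 0$, and comparing $u_i-h$ on $B\cap\Omega_i$ (where $u_i$ vanishes on $\partial\Omega_i$) while noting $u_i\equiv 0$ off $\Omega_i$ gives $u_i\le h$ on $B$, i.e.\ the sub-mean-value property. Hence $\Delta u_i$ is a nonnegative measure, and Green's identity gives $D_i(r)\le\int_{\partial B_r}u_i\,\partial_\nu u_i\,d\sigma$ for a.e.\ $r$. On $\partial B_r$ decompose $|\nabla u_i|^2=(\partial_\nu u_i)^2+|\nabla_T u_i|^2$; Cauchy--Schwarz gives $\int_{\partial B_r}u_i\,\partial_\nu u_i\le(\int u_i^2)^{1/2}(\int(\partial_\nu u_i)^2)^{1/2}$, and since $u_i|_{\partial B_r}\in H^1_0(\omega_i(r))$ the variational characterization of $\lambda_i(r)$ yields $\int_{\partial B_r}|\nabla_T u_i|^2\ge r^{-2}\lambda_i(r)\int_{\partial B_r}u_i^2$. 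With $s=(\int(\partial_\nu u_i)^2/\int u_i^2)^{1/2}$ we get
$$\frac{D_i'(r)}{D_i(r)}\ \ge\ \frac{\int_{\partial B_r}|\nabla u_i|^2}{\int_{\partial B_r}u_i\,\partial_\nu u_i}\ \ge\ s+\frac{\lambda_i(r)}{r^2 s}\ \ge\ \frac{2\sqrt{\lambda_i(r)}}{r}$$
by AM--GM.

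\emph{Planar geometry and conclusion.} Combining the two bounds, $J'(r)/J(r)\ge \frac{2}{r}\big(\sqrt{\lambda_1(r)}+\sqrt{\lambda_2(r)}-2\big)$. In $\mathbb{R}^2$ the first Dirichlet eigenvalue of a (finite or countable) disjoint union of arcs of total angular measure $\alpha$ equals $(\pi/\ell)^2$, where $\ell$ is the length of the longest arc, so $\sqrt{\lambda_i(r)}\ge \pi/\alpha_i(r)$ with $\alpha_i(r)=|\omega_i(r)|$. Because $\Omega_1\cap\Omega_2=\emptyset$ the slices $\omega_1(r),\omega_2(r)$ are disjoint subsets of $S^1$, hence $\alpha_1(r)+\alpha_2(r)\le 2\pi$, and convexity of $t\mapsto 1/t$ (AM--HM) gives $\pi/\alpha_1(r)+\pi/\alpha_2(r)\ge 4\pi/(\alpha_1(r)+\alpha_2(r))\ge 2$. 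Therefore $J'(r)\ge 0$ for a.e.\ $r\in(0,3/2]$, and since $J$ is locally absolutely continuous there, it is monotone increasing.

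\emph{Main obstacle.} The difficulty is that this entire chain is sharp, with no slack to spend: when both slices are exactly half-circles one has $\sqrt{\lambda_1}+\sqrt{\lambda_2}=2$, which cancels the $-4/r$ term precisely. Consequently one cannot use a merely qualitative Poincaré inequality on $\partial B_r$ — the exact two-dimensional eigenvalue identity $\lambda_1=(\pi/\alpha)^2$ is indispensable — and one must carefully handle possibly disconnected slices $\omega_i(r)$ (whence the longest-arc comparison). The other point deserving genuine care rather than routine computation is the subharmonicity of $u_i$ across $\partial\Omega_i$, which underlies the boundary integration by parts used to bound $D_i(r)$.
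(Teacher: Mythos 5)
Your proof is correct and follows the same Alt--Caffarelli--Friedman strategy as the paper: differentiate $\log J$, split into normal and tangential pieces on $\partial B_r$, bound the Rayleigh quotient of the angular slice by the first Dirichlet eigenvalue of an arc, and close with the convexity/AM--HM inequality on $\pi/\alpha_1+\pi/\alpha_2$. The only differences are cosmetic points of rigor where you are a bit more careful than the paper's sketch: you justify $D_i(r)\le\int_{\partial B_r}u_i\,\partial_\nu u_i$ via the explicit subharmonicity of $u_i$ across $\partial\Omega_i$ (the paper asserts this integration by parts directly and has a typo in its Cauchy--Schwarz line), and you handle disconnected slices $\omega_i(r)$ via the longest-arc observation rather than the paper's symmetrization statement; both lead to the same sharp bound.
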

 \begin{proof}
   We want to show that $J'(R)\geq 0$ a.e. $R \in (0,3/2)$. By rescaling, it is enough to prove that $J'(1)\geq 0$.
   Observe that, $$
   \frac{d}{dr}\int_{B_r(0)}|\nabla u_i|^2dx= \int_{\partial B_r(0)}|\nabla u_i|^2d \sigma \in \mathrm{L}^1(0,2)
   $$
   and
   $$J'(1)= \int_{\partial B_1(0)}|\nabla u_1|^2d \sigma \cdot\int_{B_1(0)}|\nabla
u_2|^2dx + \int_{B_1(0)}|\nabla u_1|^2dx \cdot\int_{\partial B_1(0)}|\nabla
u_2|^2d \sigma - 4\int_{B_1(0)}|\nabla u_1|^2dx \cdot\int_{B_1(0)}|\nabla
u_2|^2dx.$$
Then, we get

$$
\begin{array}{lcl}
  \ds \frac{J'(1)}{J(1)}&=& \ds \frac{\int_{\partial B_1(0)}|\nabla u_1|^2d \sigma}{\int_{B_1(0)}|\nabla u_1|^2dx} + \ds \frac{\int_{\partial B_1(0)}|\nabla
u_2|^2d \sigma}{\int_{B_1(0)}|\nabla
u_2|^2dx} - 4.
\end{array}
$$
Since $u_i$ is harmonic and supported in $\Omega_i$, we have $\Delta u_i^2= 2 |\nabla u_i|^2$ which implies

$$
\begin{array}{lcl}
  \ds\int_{B_1(0)}|\nabla u_i|^2dx & =& \ds\int_{\partial B_1(0)} u_i (u_i)_r d \sigma \\
  & =&\ds \left( \int_{\partial B_1(0)} u_i^2  d \sigma\right)^{1/2} \ds\left(\int_{\partial B_1(0)} u_i^2 d \sigma \right)^{1/2}
\end{array}
$$
where $u_r$ denotes the exterior radial derivative of $u$ along $\partial B_1(0)$. Let us denote $u_\theta$ as the tangential derivative of $u$ along $\partial B_1(0)$ then we get
$$
\begin{array}{lcl}
  \ds\int_{\partial B_1(0)}|\nabla u_i|^2d\sigma& \geq & 2 \ds \left( \int_{\partial B_1(0)} (u_i)_r^2  d \sigma\right)^{1/2} \ds\left(\int_{\partial B_1(0)} (u_i)_\theta^2 d \sigma \right)^{1/2}.
\end{array}
$$
Hence, it is enough to prove that

$$
\begin{array}{lcl}
  \ds \frac{J'(1)}{J(1)}&\geq& \ds \frac{\left(\int_{\partial B_1(0)}(u_1)_\theta^2d \sigma\right)^{1/2}}{\left(\int_{\partial B_1(0)} u_1^2d\sigma\right)^{1/2}} + \ds \frac{\left(\int_{\partial B_1(0)}
(u_2)^2_\theta d \sigma\right)^{1/2}}{\left(\int_{\partial B_1(0)}
u_2^2d \sigma\right)^{1/2}} - 2 \geq 0.
\end{array}
$$
Thus, if we can estimate the minimum of the quotient
$$
\ds\frac{\int_{\Gamma_i}(u_i)_\theta^2d \sigma}{\int_{\Gamma_i} u_i^2d\sigma},
$$ 
then we will obtain the result.
These quotients are minimized by the first eigenfunction of the domains $\partial B_1(0) \cap \overline{\Omega_i}$, respectively. Moreover, since $\Omega_1 \cap\Omega_2=\emptyset$, the question is reduced to find the minimizer of the quotient
$$
\inf_{v\in H_0^1(\Gamma)}\ds\frac{\int_{\Gamma}(v)_\theta^2d \sigma}{\int_{\Gamma} v^2d\sigma}
$$ 
for a given domain $\Gamma$ in $B_1(0)$ with a measure $\mu$, i.e. to find $v\in H_0^1(\Gamma)$ which has the smallest eigenvalue. We obtain, by the symmetrization argument, the optimal domain as a connected arc with the larger the arc the smaller the quotient. Thus, when we consider the domains $\Gamma_1$ and $\Gamma_2$, and the sum 

$$
\ds \frac{\left(\int_{\Gamma_1}(u_1)_\theta^2d \sigma\right)^{1/2}}{\left(\int_{\Gamma_1} u_1^2d\sigma\right)^{1/2}} + \ds \frac{\left(\int_{\Gamma_2}
(u_2)^2_\theta d \sigma\right)^{1/2}}{\left(\int_{\Gamma_2}
u_2^2d \sigma\right)^{1/2}} ,
$$
then this sum attains its minimum for two adjacent, complementary arcs with $u_1$ and $u_2$ the corresponding eigenfunctions. If the arcs have length $\alpha 2\pi$ and $(1-\alpha)2\pi$, then the corresponding eigenfunctions are 
$$\begin{array}{ccc}
    \sin\frac{\theta}{2\alpha} & \text{and}& \sin\frac{\theta}{2(1-\alpha)}
  \end{array}
$$
and the sum 
 $$
\ds \frac{\left(\int_{\Gamma_1}(u_1)_\theta^2d \sigma\right)^{1/2}}{\left(\int_{\Gamma_1} u_1^2d\sigma\right)^{1/2}} + \ds \frac{\left(\int_{\Gamma_2}
(u_2)^2_\theta d \sigma\right)^{1/2}}{\left(\int_{\Gamma_2}
u_2^2d \sigma\right)^{1/2}}\geq \frac{1}{2\al}+ \frac{1}{2(1-\al)}\geq 2 , \;\text{for}\;\al \in [0,1]
$$
which implies the result.

 \end{proof}

\begin{flushleft}
\textsl{\textbf{Acknowledgements}}
\end{flushleft}

I owe my deepest gratitude to my graduate advisor Luis A. Caffarelli for his supervision and support during my graduate degree at The University of Texas at Austin. His unique perspectives and visualizations for the subjects were the most significant contribution to this paper. I am also thankful to my colleagues Alessio Figalli, Nestor Guillen, and Ray Yang for their constructive feedback and productive exchange of ideas on related subjects.

\bibliographystyle{plain}
\bibliography{refs}

\end{document}